\newcommand{\qone}{\frac{m(m-1)}{2} <r \le n }
\newcommand{\qnotone}{\frac{m(m-1)-2qr}{q(q-1)}< n }
\newcommand{\bigfloor}[1]{{\biggl\lfloor {#1} \biggr\rfloor}}
\newcommand{\bigceiling}[1]{{\biggl\lceil {#1} \biggr\rceil}}
\newcommand{\ceil}[1]{\left\lceil {#1}\right\rceil}
\newcommand{\floor}[1]{\left\lfloor{#1}\right\rfloor}
\newcommand{\maxrf}[2]{{\rm maxrf}(#1,#2)}
\newcommand{\tth}{{\rm th }}
\newcommand{\sst}{{\rm st }}
\newcommand{\OBS}{{\rm OBS}}
\newcommand{\COL}{\mathord{\textit{COL}}}
\newcommand{\ang}[1]{\langle#1\rangle}
\newcommand{\kstar}{{\textstyle *}}
\newcommand{\natt}{{\mathbb{N}}}
\newcommand{\xvec}[1]{\ifcase 3{#1} {\ang {x_1,x_2,x_3} } \else 
\ifcase 4{#1} {\ang{x_1,x_2,x_3,x_4}} \else {\ang {x_1,\ldots,x_{#1}}}\fi\fi}
\newcommand{\yvec}[1]{\ifcase 3{#1} {\ang {y_1,y_2,y_3} } \else 
\ifcase 4{#1} {\ang{y_1,y_2,y_3,y_4}} \else {\ang {y_1,\ldots,y_{#1}}}\fi\fi}
\newcommand{\zvec}[1]{\ifcase 3{#1} {\ang {z_1,z_2,z_3} } \else 
\ifcase 4{#1} {\ang{z_1,z_2,z_3,z_4}} \else {\ang {z_1,\ldots,z_{#1}}}\fi\fi}
\newcommand{\vecc}[2]{\ifcase 3{#2} {\ang { {#1}_1,{#1}_2,{#1}_3 } } \else
\ifcase 4{#1} {\ang { {#1}_1,{#1}_2,{#1}_3,{#1}_{4} } }
\else {\ang { {#1}_1,\ldots,{#1}_{#2}}}\fi\fi}
\newcommand{\veccd}[3]{\ifcase 3{#2} {\ang { {#1}_{{#3}1},{#1}_{{#3}2},{#1}_{{#3}3} } } \else
\ifcase 4{#1} {\ang { {#1}_{{#3}1},{#1}_{{#3}2},{#1}_{#3}3},{#1}_{{#3}4} }
\else {\ang { {#1}_{{#3}1},\ldots,{#1}_{{#3}{#2}}}}\fi\fi}
\newcommand{\veccz}[2]{\ifcase 3{#2} {\ang { {#1}_0,{#1}_2,{#1}_3 } } \else
\ifcase 4{#1} {\ang { {#1}_0,{#1}_2,{#1}_3,{#1}_{4} } }
\else {\ang { {#1}_0,\ldots,{#1}_{#2}}}\fi\fi}
\newcommand{\xve}[1]{\ifcase 3{#1} {x_1,x_2,x_3} \else 
\ifcase 4{#1} {x_1,x_2,x_3,x_4} \else {x_1,\ldots,x_{#1}}\fi\fi}
\newcommand{\yve}[1]{\ifcase 3{#1} {y_1,y_2,y_3} \else 
\ifcase 4{#1} {y_1,y_2,y_3,y_4} \else {y_1,\ldots,y_{#1}}\fi\fi}
\newcommand{\zve}[1]{\ifcase 3{#1} {z_1,z_2,z_3} \else 
\ifcase 4{#1} {z_1,z_2,z_3,z_4} \else {z_1,\ldots,z_{#1}}\fi\fi}
\newcommand{\ve}[2]{\ifcase 3#2 {{#1}_1,{#1}_2,{#1}_3} \else
\ifcase 4#2 {{#1}_1,{#1}_2,{#1}_3,{#1}_{4}}
\else {{#1}_1,\ldots,{#1}_{#2}}\fi\fi}
\newcommand{\ved}[3]{\ifcase 3#2 {{#1}_{{#3}1},{#1}_{{#3}2},{#1}_{{#3}3}} \else
\ifcase 4#2 {{#1}_{{#3}1},{#1}_{{#3}2},{#1}_{{#3}3},{#1}_{{#3}4}}
\else {{#1}_{{#3}1},\ldots,{#1}_{{#3}{#2}}}\fi\fi}
\newcommand{\fuve}[3]{
\ifcase 3#2
{{#3}({#1}_1),{#3}({#1}_2,{#3}({#1}_3)} \else
\ifcase 4#2
{{#3}({#1}_1),{#3}({#1}_2),{#3}({#1}_3),{#3}({#1}_4)}
\else
{{#3}({#1}_1),\ldots,{#3}({#1}_{#2})}\fi\fi}
\newcommand{\setmathchar}[1]{\ifmmode#1\else$#1$\fi}
\newcommand{\vlist}[2]{%
	\setmathchar{%
		\compound#2\one{#2}\two
		\ifcompound
			({#1}_1,\ldots,{#1}_{#2})
		\else
			\ifcat N#2
				({#1}_1,\ldots,{#1}_{#2})
			\else
				\ifcase#2
					({#1}_0)\or
					({#1}_1)\or
					({#1}_1,{#1}_2)\or 
					({#1}_1,{#1}_2,{#1}_3)\or
					({#1}_1,{#1}_2,{#1}_3,{#1}_4)\else 
					({#1}_1,\ldots,{#1}_{#2})
				\fi
			\fi
		\fi}}
\newif\ifcompound
\def\compound#1\one#2\two{%
	\def\one{#1}
	\def\two{#2}
	\if\one\two
		\compoundfalse
	\else
		\compoundtrue
	\fi}
\newcommand{\xwe}[1]{\ifcase 3{#1} {x_1\wedge x_2\wedge x_3} \else 
\ifcase 4{#1} {x_1\wedge x_2\wedge x_3\wedge x_4} \else {x_1\wedge \cdots \wedge
x_{#1}}\fi\fi}
\newcommand{\we}[2]{\ifcase 3#2 {\ang { {#1}_1\wedge {#1}_2\wedge {#1}_3 } } \else
\ifcase 4{#1} {\ang { {#1}_1\wedge {#1}_2\wedge {#1}_3\wedge {#1}_{4} } }
\else {\ang { {#1}_1\wedge \cdots\wedge {#1}_{#2}}}\fi\fi}
\newcommand{\st}{\mathrel{:}}
\newcommand{\into}{\rightarrow}
\newcommand{\eps}{\varepsilon}
\newcommand{\es}{\emptyset}
\newcommand{\union}{\cup}
\newcommand{\s}[1]{\s_{#1}}
\newcommand{\monus}{\;\raise.5ex\hbox{{${\buildrel
    \ldotp\over{\hbox to 6pt{\hrulefill}}}$}}\;}
\newcounter{savenumi}
\newtheorem{theoremfoo}{Theorem}[section] 
\newenvironment{theorem}{\pagebreak[1]\begin{theoremfoo}}{\end{theoremfoo}}
\newtheorem{lemmafoo}[theoremfoo]{Lemma}
\newenvironment{lemma}{\pagebreak[1]\begin{lemmafoo}}{\end{lemmafoo}}
\newtheorem{conjecturefoo}[theoremfoo]{Conjecture}
\newtheorem{conventionfoo}[theoremfoo]{Convention}
\newtheorem{porismfoo}[theoremfoo]{Porism}
\newtheorem{gamefoo}[theoremfoo]{Game}
\newtheorem{corollaryfoo}[theoremfoo]{Corollary}
\newenvironment{corollary}{\pagebreak[1]\begin{corollaryfoo}}{\end{corollaryfoo}}
\newtheorem{claimfoo}[theoremfoo]{Claim}
\newtheorem{openfoo}[theoremfoo]{Open Problem}
\newtheorem{exercisefoo}{Exercise}
\newcommand{\fig}[1] 
{
 \begin{figure}
 \begin{center}
 \input{#1}
 \end{center}
 \end{figure}
}
\newtheorem{potanafoo}[theoremfoo]{Potential Analogue}
\newtheorem{notefoo}[theoremfoo]{Note}
\newenvironment{note}{\pagebreak[1]\begin{notefoo}\rm}{\end{notefoo}}
\newtheorem{notabenefoo}[theoremfoo]{Nota Bene}
\newtheorem{nttn}[theoremfoo]{Notation}
\newenvironment{notation}{\pagebreak[1]\begin{nttn}\rm}{\end{nttn}}
\newtheorem{empttn}[theoremfoo]{Empirical Note}
\newtheorem{examfoo}[theoremfoo]{Example}
\newenvironment{example}{\pagebreak[1]\begin{examfoo}\rm}{\end{examfoo}}
\newtheorem{dfntn}[theoremfoo]{Def}
\newenvironment{definition}{\pagebreak[1]\begin{dfntn}\rm}{\end{dfntn}}
\newtheorem{propositionfoo}[theoremfoo]{Proposition}
\newenvironment{proof}
    {\pagebreak[1]{\narrower\noindent {\bf Proof:\quad\nopagebreak}}}{\QED}
\newcommand{\yyskip}{\penalty-50\vskip 5pt plus 3pt minus 2pt}
\newcommand{\blackslug}{\hbox{\hskip 1pt
        \vrule width 4pt height 8pt depth 1.5pt\hskip 1pt}}
\newcommand{\QED}{{\penalty10000\parindent 0pt\penalty10000
        \hskip 8 pt\nolinebreak\blackslug\hfill\lower 8.5pt\null}
        \par\yyskip\pagebreak[1]}
\newcommand{\BBB}{{\penalty10000\parindent 0pt\penalty10000
        \hskip 8 pt\nolinebreak\hbox{\ }\hfill\lower 8.5pt\null}
        \par\yyskip\pagebreak[1]}
\newtheorem{factfoo}[theoremfoo]{Fact}
\newenvironment{block}{\begin{list}{\hbox{}}{\leftmargin 1em
    \itemindent -1em \topsep 0pt \itemsep 0pt \partopsep 0pt}}{\end{list}}
\newcommand{\Gnm}{G_{n,m}}
\newcommand{\Gmn}{G_{m,n}}
\begin{document}       
\title{Rectangle Free Coloring of Grids}

\author{
{Stephen Fenner}
\thanks{University of South Carolina,
Department of Computer Science and Engineering,
Columbia, SC, 29208
\texttt{fenner@cse.sc.edu},
Partially supported by NSF CCF-0515269
}
\\ {\small Univ of South Carolina}
\and
{William Gasarch}
\thanks{University of Maryland,
Dept. of Computer Science,
        College Park, MD\ \ 20742.
\texttt{gasarch@cs.umd.edu}
}
\\ {\small Univ. of MD at College Park}
\and
{Charles Glover}
\thanks{Booz Allen Hamilton,
134 National Business Parkway
        Annaopolis Junction, MD\ \ 20701.
\texttt{glover\_charles@bah.com}
}
\\ {\small Univ. of MD at College Park}
\and
{Semmy Purewal}
\thanks{University of North Carolina at Ashville,
Department of Computer Science,
Ashville, NC 28804
\texttt{tspurewal@gmail.com}
}
\\ {\small Univ of NC at Ashville}
}

\date{}

\maketitle

\begin{abstract}
Let $\Gnm$ be the grid $[n]\times[m]$.
$\Gnm$ is \emph{$c$-colorable} if there is a function
$\chi: \Gnm \rightarrow [c]$  such that
there are no rectangles with all four corners
the same color.  
We ask {\it for which values of $n,m,c$ is $\Gnm$ $c$-colorable?}
We determine 
(1) \emph{exactly} which grids are 2-colorable,
(2) \emph{exactly} which grids are 3-colorable,
(2) \emph{exactly} which grids are 4-colorable.
Our main tools are combinatorics and finite fields.

Our problem has two motivations: (1) (ours) A Corollary of the Gallai-Witt theorem states
that, for all $c$, there exists $W=W(c)$  such that any $c$-coloring of $[W]\times[W]$
has a monochromatic square. The bounds on $W(c)$ are enormous. Our relaxation of the problem
to rectangles yields much smaller bounds.
(2) Colorings grids to avoid a rectangle is equivalent to coloring the edges
of a bipartite graph to avoid a monochromatic $K_{2,2,}$. Hence our work is related
to bipartite Ramsey Numbers. 
\end{abstract}

\vfill\eject

\tableofcontents

\section{Introduction}

\begin{notation}
If $n\in\natt$ then $[n]=\{1,\ldots,n\}$.
If $n,m\in\natt$ then $\Gnm$ is the grid $[n]\times[m]$.
\end{notation}

The Gallai-Witt theorem\footnote{It was attributed to Gallai in
  \cite{radogerman} and \cite{radoenglish}; Witt proved the theorem in
\cite{witt}.} (also called the multi-dimensional 
Van Der Waerden theorem) has the following corollary:
{\it For all $c$, there exists $W=W(c)$ such that, for all $c$-colorings
of $[W]\times [W]$ there exists a monochromatic square.}
The classical proof of the theorem gives very large upper bounds
on $W(c)$. Despite some improvements~\cite{gridiowa},
the known bounds on $W(c)$ are still quite large.
If we relax the problem to seeking a {\it monochromatic rectangle}
then we can obtain far smaller bounds. 

\begin{definition}
A \emph{rectangle} of $\Gnm$ is a subset
of the form $\{(a,b),(a+c_1,b),(a+c_1,b+c_2),(a,b+c_2)\}$ for some $a, b, c_1, c_2 \in \natt$.  
A grid $\Gnm$ is \emph{$c$-colorable} if there is a function
$\chi: \Gnm \rightarrow [c]$  such that
there are no rectangles with all four corners
the same color.
\end{definition}

Not all grids have
$c$-colorings.  As an example, for any $c$ clearly $G_{c+1,c^{c+1}+1}$
does not have a $c$-coloring by two applications of the pigeonhole
principle.  In this paper, we ask the following. Fix $c$.

\bigskip

\centerline{\it For which values of $n$ and $m$ is $\Gnm$ $c$-colorable?}

\bigskip

\begin{definition}
Let $n,m,n',m'\in \natt$.
$\Gmn$ {\it contains } $G_{n',m'}$ if $n'\le n$ and $m'\le m$.
$\Gmn$ {\it is contained in } $G_{n',m'}$ if $n\le n'$ and $m\le m'$.
Proper containment means that at least one of the inequalities is strict.
\end{definition}

Clearly, if $\Gnm$ is $c$-colorable, then all grids that it contains are $c$-colorable.
Likewise, if $\Gnm$ is
not $c$-colorable then all grids that contain it are not $c$-colorable.  

\begin{definition}
Fix $c \in \natt$. $\OBS_c$ is the set of all grids $\Gnm$ such that
$\Gnm$ is not $c$-colorable but all grids properly contained in $\Gmn$ are 
$c$-colorable.
$\OBS_c$ stands for {\it Obstruction Sets}.
\end{definition}

We leave the proof of the following theorem to the reader.

\begin{theorem}\label{th:obs}
Fix $c \in \natt$. A grid $\Gnm$ is $c$-colorable iff
it does not contain any element of $\OBS_c$.
\end{theorem}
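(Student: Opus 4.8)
The plan is to derive both directions of the biconditional directly from the monotonicity remark stated just before the theorem (a $c$-colorable grid has all contained grids $c$-colorable, and a non-$c$-colorable grid has all containing grids non-$c$-colorable), together with a simple finite-descent argument. The only genuinely substantive fact underlying the monotonicity remark is that restricting a coloring to a sub-block sends rectangles to rectangles: if $\chi_{a,b}\colon G_{a,b}\to[c]$ is rectangle free and $n\le a$, $m\le b$, then the restriction of $\chi_{a,b}$ to $[n]\times[m]$ is a rectangle-free $c$-coloring of $G_{n,m}$, because every rectangle of $G_{n,m}$ is also a rectangle of $G_{a,b}$.

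For the forward direction, I would argue the contrapositive: if $G_{a,b}$ contains some $G_{n,m}\in\OBS_c$, then since $G_{n,m}$ is not $c$-colorable and $G_{a,b}$ contains it, the monotonicity remark gives that $G_{a,b}$ is not $c$-colorable either. Equivalently, a rectangle-free $c$-coloring of $G_{a,b}$ restricts to one of $G_{n,m}$, contradicting $G_{n,m}\in\OBS_c$.

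For the converse, again argue the contrapositive: assume $G_{a,b}$ is not $c$-colorable and exhibit an element of $\OBS_c$ that it contains. Let $S=\{G_{n,m}\st n\le a,\ m\le b,\ G_{n,m}\text{ is not }c\text{-colorable}\}$. Then $S\ne\es$ because $G_{a,b}\in S$, and $S$ is finite since there are at most $ab$ pairs $(n,m)$ with $n\le a$, $m\le b$. Choose $G_{n_0,m_0}\in S$ with $n_0+m_0$ minimal. I claim $G_{n_0,m_0}\in\OBS_c$. It is not $c$-colorable since $G_{n_0,m_0}\in S$. And if $G_{n',m'}$ is properly contained in $G_{n_0,m_0}$, then $n'\le n_0$ and $m'\le m_0$ with at least one inequality strict, so $n'+m'<n_0+m_0$ and also $n'\le n_0\le a$, $m'\le m_0\le b$; by minimality of $n_0+m_0$ we get $G_{n',m'}\notin S$, i.e.\ $G_{n',m'}$ is $c$-colorable. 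Hence every grid properly contained in $G_{n_0,m_0}$ is $c$-colorable, so $G_{n_0,m_0}\in\OBS_c$, and $G_{a,b}$ contains it.

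I do not expect any real obstacle here; the proof is essentially bookkeeping. The two points that need a little care are: (i) transitivity of containment, which is what guarantees that a grid properly contained in $G_{n_0,m_0}$ is still contained in $G_{a,b}$ and therefore a legitimate candidate to be tested against $S$; and (ii) matching the paper's convention that ``properly contained'' means at least one of the two inequalities is strict, so that minimizing $n_0+m_0$ over $S$ produces exactly a $c$-minimal grid as required by the definition of $\OBS_c$.
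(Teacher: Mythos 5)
Your proof is correct; the paper explicitly leaves this theorem to the reader, and your argument (monotonicity of colorability under containment for one direction, and selecting a contained non-colorable grid minimizing $n+m$ to produce a $c$-minimal witness for the other) is exactly the intended bookkeeping argument.
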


By Theorem~\ref{th:obs} we can rephrase the question of finding
which grids are $c$-colorable:

\bigskip

\centerline{\it What is $\OBS_c$?}

\bigskip

Note that if $\Gnm \in \OBS_c$, then $\Gmn \in \OBS_c$.

Our problem has another motivation involving the Bipartite Ramsey Theorem
which we now state.
\begin{theorem}
For all $L$, for all $c$,  there exists $n$ such that for any $c$-coloring of
the edges of $K_{n,n}$ there exists a monochromatic $K_{L,L}$.
\end{theorem}

We now state a corollary of the Bipartite Ramsey theorem and a statement about
grid colorings that is easily seen to be equivalent to it.

\begin{enumerate}
\item
For all $c$ there exists $n$ such that for any $c$-coloring of the edges of $K_{n,n}$
there exists a monochromatic $K_{2,2}$.
\item
For all $c$ there exists $n$ such that for any $c$-coloring of $G_{n,n}$ 
there exists a monochromatic rectangle.
\end{enumerate}

One can ask, given $c$, what is $n$?
Beineke and Schwenk \cite{beineke:1975} studied a closely
related problem: What is the minimum value of $n$ such that any
2-coloring of $K_{n,n}$ results in a monochromatic $K_{a,b}$?  In
their work, this minimal value is denoted $R(a,b)$.  Later, Hattingh
and Henning \cite{hattingh:1998} defined $n(a,b)$ as the minimum $n$
for which any 2-coloring of $K_{n,n}$ contains a monochromatic
$K_{a,a}$ or a monochromatic $K_{b,b}$.  

Our results are about $G_{n,m}$ not just $G_{n,n}$ hence they are not
quite the same as the bipartite Ramsey numbers. Even so, we do obtain
some new Bipartite Ramsey Numbers. They are in Section~\ref{se:bipartite}.

The remainder of this paper is organized as follows.  In Section~\ref{se:lower}
we develop tools to show grids {\it are not } $c$-colorable.
In Section~\ref{se:findprop} we develop tools to show grids {\it are} $c$-colorable.
In Section~\ref{se:bounds} we obtain upper and lower bounds on $|\OBS_c|$.
In Section~\ref{se:col2}, ~\ref{se:col3}, and ~\ref{se:col4}  we find $\OBS_2$, $\OBS_3$, and $\OBS_4$  respectively.
In Section~\ref{se:bipartite} we apply the results to find some
new bipartite Ramsey numbers.
We conclude with some open questions.
The appendix contains some sizes of maximum rectangle free sets
(to be defined later).

In a related paper, Cooper, Fenner, and Purewal \cite{cooper:2008}
generalize our problem to multiple dimensions and obtain upper and lower 
bounds on the sizes of the obstruction sets.
In another related paper Molina, Oza, and Puttagunta~\cite{MOP}
have looked at some variants of our questions.

\section{Tools to Show Grids are Not $c$-colorable}\label{se:lower}

\subsection{Using Rectangle Free Sets}

A \emph{rectangle-free subset} $A \subseteq \Gnm$ is a subset that
does not contain a rectangle.  A problem that is
closely related to grid-colorability is that of finding a
rectangle-free subset of maximum cardinality.  This relationship is
illustrated by the following lemma.

\begin{theorem}
\label{th:connection}
If $\Gnm$ is $c$-colorable, then it contains a rectangle-free
subset of size $\lceil \frac{nm}{c} \rceil$.
\end{theorem}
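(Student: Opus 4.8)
The plan is to use a direct pigeonhole argument on the color classes of a witnessing coloring. Suppose $G_{n,m}$ is $c$-colorable, and fix a coloring $\chi_{n,m}\colon G_{n,m}\to[c]$ with no monochromatic rectangle. For each $i\in[c]$ let $A_i=\chi_{n,m}^{-1}(i)$ be the $i$th color class. First I would observe that each $A_i$ is rectangle-free: if $A_i$ contained a rectangle, then all four of its corners would lie in $G_{n,m}$ and receive color $i$ under $\chi_{n,m}$, contradicting the assumption that $\chi_{n,m}$ has no monochromatic rectangle.

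Next I would note that the sets $A_1,\ldots,A_c$ partition $G_{n,m}$, so $\sum_{i=1}^{c}|A_i|=|G_{n,m}|=nm$. Hence by the pigeonhole principle there is some $i$ with $|A_i|\ge nm/c$, and since $|A_i|$ is an integer this gives $|A_i|\ge\lceil nm/c\rceil$. Taking $A=A_i$ yields a rectangle-free subset of $G_{n,m}$ of size at least $\lceil nm/c\rceil$ (and one can pass to a subset of exactly that size if an exact cardinality is wanted, since every subset of a rectangle-free set is rectangle-free).

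There is no real obstacle here — the statement is essentially an averaging observation, and the only thing to be careful about is the ceiling, which is handled by integrality of $|A_i|$. The content of the lemma is conceptual rather than technical: it reduces the (hard) upper-bound question ``is $G_{n,m}$ $c$-colorable?'' to the (still hard, but purely extremal) question of how large a rectangle-free subset of $G_{n,m}$ can be, which is exactly the ${\rm maxrf}$-type quantity the paper will study in the subsequent sections to prove non-colorability results.
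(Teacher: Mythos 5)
Your proof is correct and is essentially the same as the paper's: partition the grid into the $c$ color classes, note each is rectangle-free, and apply the pigeonhole principle together with integrality to get a class of size at least $\lceil nm/c\rceil$. The paper states this in two sentences; your write-up just spells out the same argument in more detail.
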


\begin{proof}
A $c$-coloring partitions the elements of $\Gnm$ into $c$
rectangle-free subsets.  By the pigeon-hole principle, one of these
sets must be of size at least $\lceil \frac{nm}{c} \rceil$.
\end{proof}

\begin{definition}
Let $n,m\in\natt$.
$\maxrf n m $ is the size of the maximum
rectangle-free $A\subseteq \Gnm$.
\end{definition}

Finding the maximum cardinality of a rectangle-free subset is
equivalent to a special case of a well-known problem 
of  Zarankiewicz~\cite{zaran} (see \cite{GRS} or \cite{roman} for more information).
The Zarankiewicz function, denoted
$Z_{r,s}(n,m)$, counts the minimum number of edges in a bipartite
graph with vertex sets of size $n$ and $m$ that guarantees a subgraph
isomorphic to $K_{r,s}$.  
Zarankiewicz's problem was to determine $Z_{r,s}(n,m)$.

If $r=s$, the function is denoted
$Z_r(n,m)$.  If one views a grid as an incidence matrix for a
bipartite graph with vertex sets of cardinality $n$ and $m$, then a
rectangle is equivalent to a subgraph isomorphic to $K_{2,2}$.
Therefore the maximum cardinality of a rectangle-free set in $\Gnm$
is $Z_2(n,m) - 1$.  We will use this lemma in its contrapositive form,
i.e., we will often show that $\Gnm$ is not $c$-colorable by showing
that $Z_2(n,m) \leq \lceil \frac{nm}{c} \rceil$.

Reiman~\cite{reiman} proved the following lemma. Roman~\cite{roman} later generalized it.

\begin{lemma}
\label{reimanlemma}
Let $m \leq n \leq \binom{m}{2}$. 
Then $Z_2(n,m) \leq \floor{\frac{n}{2}\left(1+\sqrt{1+4m(m-1)/n}\right)} + 1$.
\end{lemma}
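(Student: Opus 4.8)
The plan is to bound $Z_2(n,m)$ by a standard double-counting (convexity) argument on the columns of a rectangle-free matrix, the classical Kővári–Sós–Turán / Reiman technique. Think of a rectangle-free subset $A \subseteq \Gnm$ as an $n \times m$ $0$-$1$ matrix with a $1$ in position $(i,j)$ iff $(i,j) \in A$; rectangle-freeness says no two rows share two common columns with a $1$, equivalently no two columns share two common rows with a $1$. Let $d_1, \ldots, d_n$ denote the row sums (the number of $1$'s in each row). Each row with $d_i$ ones contributes $\binom{d_i}{2}$ pairs of columns that are ``jointly covered'' by that row, and rectangle-freeness forces all these pairs, over all rows, to be distinct. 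Hence $\sum_{i=1}^n \binom{d_i}{2} \le \binom{m}{2}$.

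First I would set $e = |A| = \sum_i d_i$ and apply convexity of $x \mapsto \binom{x}{2}$ (equivalently, the Cauchy–Schwarz inequality $\sum d_i^2 \ge (\sum d_i)^2/n$) to get
\[
\frac{(e/n)(e/n - 1)}{2}\, n \;\le\; \sum_{i=1}^n \binom{d_i}{2} \;\le\; \binom{m}{2} \;=\; \frac{m(m-1)}{2}.
\]
Clearing denominators gives the quadratic inequality $e^2 - ne \le n m (m-1)$ in $e$. Solving $e^2 - ne - nm(m-1) \le 0$ yields $e \le \frac{n}{2}\left(1 + \sqrt{1 + 4m(m-1)/n}\right)$. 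Since $Z_2(n,m) - 1$ is the size of a maximum rectangle-free set, this $e$ is exactly $Z_2(n,m)-1$, so $Z_2(n,m) \le \frac{n}{2}\left(1+\sqrt{1+4m(m-1)/n}\right) + 1$, and since $Z_2(n,m)$ is an integer we may take the floor of the square-root term, giving the stated bound $Z_2(n,m) \le \floor{\frac{n}{2}\left(1+\sqrt{1+4m(m-1)/n}\right)} + 1$.

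The only real subtlety — and the step I'd be most careful about — is the passage from the real inequality to the floored integer inequality: one must check that $\floor{\frac{n}{2}(1+\sqrt{\cdots})} + 1 \ge \lceil \frac{n}{2}(1+\sqrt{\cdots}) + 1\rceil$ is a correct weakening, i.e. that putting the floor only on the radical-containing summand and then adding $1$ still dominates the true integer bound. This is where the hypothesis $m \le n \le \binom{m}{2}$ is presumably used: the lower bound $m \le n$ keeps the expression under the square root well-behaved, and the upper bound $n \le \binom{m}{2} = \frac{m(m-1)}{2}$ is exactly what one needs for the bound to be meaningful (nonvacuous) — when $n > \binom{m}{2}$ the constraint $\sum \binom{d_i}{2} \le \binom{m}{2}$ forces so few $1$'s per row on average that a different/trivial estimate takes over, and indeed the formula would no longer be the right one. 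I would verify these edge conditions explicitly, but expect no conceptual difficulty: the heart of the argument is the one-line convexity bound above.
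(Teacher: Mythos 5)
Your proof is correct, but note that the paper itself gives no proof of Lemma~\ref{reimanlemma}: it is quoted as a theorem of Reiman, with only a citation. What you have written is essentially Reiman's original argument (the $K_{2,2}$ case of the K\H{o}v\'ari--S\'os--Tur\'an method), and it checks out: the key inequality $\sum_{i=1}^n \binom{d_i}{2} \le \binom{m}{2}$ is exactly the transposed form of the paper's own Lemma~\ref{le:binom} (which counts pairs of rows per column rather than pairs of columns per row), convexity of $x\mapsto x(x-1)/2$ gives $e^2 - ne - nm(m-1) \le 0$ for $e=|A|$, and the quadratic formula yields $e \le \frac{n}{2}\bigl(1+\sqrt{1+4m(m-1)/n}\bigr)$. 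Two small remarks. First, the floor step is simpler than you fear: the maximum size of a rectangle-free set is an integer bounded above by that real quantity, hence by its floor, and since this maximum equals $Z_2(n,m)-1$ you get $Z_2(n,m) \le \floor{\frac{n}{2}\bigl(1+\sqrt{1+4m(m-1)/n}\bigr)}+1$ directly; there is no $\lceil\cdot\rceil$ comparison to verify. Second, the hypothesis $m \le n \le \binom{m}{2}$ is not actually used anywhere in the argument --- the double count and the convexity bound are unconditional --- so it is best understood as the regime in which Reiman stated the result and in which the bound is informative, not as a logical prerequisite; you should say this plainly rather than speculate that the proof breaks without it.
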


\begin{corollary}
\label{uncolor1}
Let $m \leq n \leq \binom{m}{2}$.
Let $z_{n,m} = \floor{\frac{n}{2}\left(1+\sqrt{1+4m(m-1)/n}\right)}+1$ be the upper-bound on $Z_2(n,m)$ in Lemma \ref{reimanlemma}.
If 
$z_{n,m} \leq \lceil \frac{nm}{c} \rceil$
then $\Gnm$ is not $c$-colorable.
\end{corollary}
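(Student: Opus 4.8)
The plan is to chain together the three facts already in place: Theorem~\ref{th:connection}, the identity relating rectangle-free sets to the Zarankiewicz function, and Lemma~\ref{reimanlemma}. First I would record the exact correspondence noted above, namely that the maximum cardinality of a rectangle-free subset of $G_{n,m}$ is $Z_2(n,m)-1$; in the notation of this section, $\maxrf{n}{m} = Z_2(n,m)-1$.

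Next I would argue by contrapositive. Suppose $G_{n,m}$ is $c$-colorable. By Theorem~\ref{th:connection} it contains a rectangle-free subset of size $\ceil{\frac{nm}{c}}$, so $\maxrf{n}{m} \ge \ceil{\frac{nm}{c}}$, and hence $Z_2(n,m) \ge \ceil{\frac{nm}{c}} + 1$. On the other hand, the standing hypothesis $m \le n \le \binom{m}{2}$ is exactly what is needed to apply Lemma~\ref{reimanlemma}, which gives $Z_2(n,m) \le z_{n,m}$. Feeding in the assumption $z_{n,m} \le \ceil{\frac{nm}{c}}$ of the corollary yields $Z_2(n,m) \le \ceil{\frac{nm}{c}}$, contradicting $Z_2(n,m) \ge \ceil{\frac{nm}{c}}+1$. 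Therefore $G_{n,m}$ is not $c$-colorable.

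I do not expect a genuine obstacle here: the corollary is essentially a repackaging of Theorem~\ref{th:connection}, the equation $\maxrf{n}{m} = Z_2(n,m)-1$, and Lemma~\ref{reimanlemma}, and the range restriction $m \le n \le \binom{m}{2}$ is inherited verbatim from the lemma. The only place demanding any care is the off-by-one bookkeeping: "contains a rectangle-free set of size $\ceil{nm/c}$" translates to "$Z_2(n,m) \ge \ceil{nm/c}+1$," which is why the non-strict hypothesis $z_{n,m} \le \ceil{nm/c}$ already suffices to force uncolorability.
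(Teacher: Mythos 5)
Your proof is correct and is exactly the argument the paper intends: the corollary is stated without proof as an immediate consequence of Theorem~\ref{th:connection}, the identity $\maxrf{n}{m}=Z_2(n,m)-1$, and Lemma~\ref{reimanlemma}, with the same contrapositive chaining and the same off-by-one bookkeeping you describe.
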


Corollary \ref{uncolor1}, and some  2-colorings of grids,
are sufficient to find $\OBS_2$.  
To find $\OBS_3$ and $\OBS_4$, we need more powerful tools to show grids
are not colorable (along with some 3-colorings and 4-colorings of grids).

\begin{definition}
Let $n,m,x_1,\ldots,x_m\in\natt$.
$(x_1,\ldots,x_m)$ is \emph{$(n,m)$-placeable}
if there exists a rectangle-free $A\subseteq \Gnm$
such that, for $1\le j\le m$, there are $x_j$ elements
of $A$ in the $j^\tth$ column.
\end{definition}

\begin{lemma}\label{le:binom}
Let $n,m,x_1,\ldots,x_m\in\natt$ be such
that
$(x_1,\ldots,x_m)$ is $(n,m)$-placeable.
Then
$\sum_{i=1}^m \binom{x_i}{2} \le \binom{n}{2}.$
\end{lemma}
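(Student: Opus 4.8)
This is the standard Kővári–Sós–Turán / Zarankiewicz double-counting argument, now read off directly from the combinatorial structure of a rectangle-free set. The plan is to fix a witnessing rectangle-free set $A\subseteq\Gnm$ for the $(n,m)$-placeability of $(x_1,\ldots,x_m)$, count pairs of rows that are ``jointly hit'' within a single column, and observe that rectangle-freeness forces this count to inject into the set of all $\binom{n}{2}$ pairs of rows.

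\textbf{Key steps.} First I would introduce, for each column $j\in[m]$, the set $R_j=\{a\in[n] : (a,j)\in A\}$ of rows used by $A$ in column $j$; by hypothesis $|R_j|=x_j$. Second, for each $j$ consider the family $P_j=\binom{R_j}{2}$ of unordered pairs of distinct rows both appearing in column $j$, so $|P_j|=\binom{x_j}{2}$. Third — the crux — I would argue that the $P_j$ are pairwise disjoint: if some pair $\{a,a'\}$ lay in both $P_j$ and $P_{j'}$ with $j\neq j'$, then $(a,j),(a',j),(a,j'),(a',j')$ would all be in $A$, i.e.\ $A$ would contain a rectangle (taking the corner $(a,j)$ with offsets $c_1=a'-a$, $c_2=j'-j$, up to reordering rows/columns so the offsets are positive), contradicting that $A$ is rectangle-free. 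Fourth, since each $P_j\subseteq\binom{[n]}{2}$ and the $P_j$ are disjoint, $\sum_{j=1}^m \binom{x_j}{2} = \sum_{j=1}^m |P_j| = \bigl|\bigcup_{j=1}^m P_j\bigr| \le \bigl|\binom{[n]}{2}\bigr| = \binom{n}{2}$, which is the claimed inequality (with the index renamed from $j$ to $i$).

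\textbf{Anticipated obstacle.} There is no real obstacle; the only point requiring a sentence of care is the disjointness step, and specifically matching the ``rectangle'' pattern $\{(a,b),(a+c_1,b),(a+c_1,b+c_2),(a,b+c_2)\}$ from the definition to the four points produced above — one just picks $b=\min(j,j')$, $a=\min(a,a')$ and sets $c_1,c_2$ to the (positive) differences. Everything else is bookkeeping. I would keep the write-up to a few lines, since the substance is entirely in the observation that a repeated row-pair across two columns is exactly a rectangle.
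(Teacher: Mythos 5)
Your proposal is correct and is essentially the paper's own argument: the paper defines $f(j)=\{\{a,b\} : (a,j),(b,j)\in A\}$ (your $P_j$) and derives the bound from the fact that a row-pair shared by two columns yields a rectangle. The only cosmetic difference is that you argue disjointness directly while the paper phrases it contrapositively via pigeonhole; the substance is identical.
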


\begin{proof}
Let $A\subseteq \Gnm$ be a set that shows that 
$(x_1,\ldots,x_m)$ is $(n,m)$-placeable.
Let $\binom{A}{2}$ be the set of pairs of elements of $A$.
Let $2^{\binom{A}{2}}$ be the powerset of $\binom{A}{2}$.

Define the function $f:[m]\into 2^{\binom{A}{2}}$ as follows.
For $1\le j\le m$,

$$f(j) = \{ \{a,b\} \st (a,j), (b,j) \in A \}.$$

If $\sum_{j=1}^m |f(j)| > \binom{n}{2}$ then there exists
$j_1\ne j_2$ such that $f(j_1)\cap f(j_2)\ne \es$.
Let $\{a,b\} \in f(j_1)\cap f(j_2)$.
Then
$$\{ (a,j_1), (a,j_2), (b,j_1), (b,j_2) \} \subseteq A.$$
Hence $A$ contains a rectangle.
Since this cannot happen, 
$\sum_{j=1}^m |f(j)| \le \binom{n}{2}$.
Note that $|f(j)|=\binom{x_j}{2}$. Hence
$\sum_{i=1}^m \binom{x_i}{2} \le \binom{n}{2}.$
\end{proof}

\begin{theorem}\label{th:density}
Let $a,n,m \in \natt$. Let $q,r \in \natt$ be such that
$a=qn+r$ with $0\le r\le n$. 
Assume that there exists $A\subseteq \Gnm$ such that $|A|=a$ and $A$ is
rectangle-free.
\begin{enumerate}
\item
If $q\ge 2$ then
$$
n\le \bigfloor{\frac{m(m-1)-2rq}{q(q-1)}}.
$$
\item
If $q=1$ then 
$$
r\le \frac{m(m-1)}{2}.
$$
\end{enumerate}
\end{theorem}

\begin{proof}
The proof for the $q\ge 2$ and the $q=1$ case
begins the same; hence we will not split into cases yet.

Assume that, for $1\le j\le m$, 
 the number of elements of $A$ in the
$j^\tth$ column is $x_j$. Note that  $\sum_{j=1}^m x_j = a$.
By Lemma~\ref{le:binom}
$\sum_{j=1}^m \binom{x_j}{2} \le  \binom{n}{2}$.
We look at the least value that $\sum_{j=1}^n \binom{x_j}{2}$ can have.

Consider the following question:

\noindent
Minimize $\sum_{j=1}^n \binom{x_j}{2}$ 

\medskip

\noindent
Constraints:
\begin{itemize}
\item
$\sum_{j=1}^n x_j = a$.
\item
$x_1,\ldots,x_n$ are natural numbers.
\end{itemize}

One can easily show that this is minimized
when, for all $1\le j\le n$,
$$x_j\in \{ \floor{a/n}, \ceil{a/n}\} = \{q,q+1\}.$$
In order for $\sum_{j=1}^n x_j = a$ we need to
have $n-r$ many $q$'s and $r$ many $q+1$'s.
Hence we obtain

$\sum_{j=1}^n \binom{x_j}{2}$ is at least
$$(n-r)\binom{q}{2} + r\binom{q+1}{2}.$$

Hence we have
$$(n-r)\binom{q}{2} + r\binom{q+1}{2} \le \sum_{j=1}^n \binom{x_j}{2}
\le \binom{m}{2}$$
$$nq(q-1) -rq(q-1) + r(q+1)q \le m(m-1)$$
$$nq(q-1) - rq^2+ rq + rq^2 +rq \le m(m-1)$$
$$nq(q-1) +2rq \le m(m-1)$$

\noindent
{\bf Case 1:} $q\ge 2$. 

Subtract $2rq$ from both sides to obtain
$$nq(q-1) \le m(m-1)-2rq.$$
Since $q-1\ne 0$ we can divide
by $q(q-1)$ to obtain
$$
n\le \bigfloor{\frac{m(m-1)-2rq}{q(q-1)}}.
$$

\bigskip

\noindent
{\bf Case 2:} $q=1$. 

Since $q-1=0$ we get

$$2r \le m(m-1)$$

$$r \le \frac{m(m-1)}{2}.$$
\end{proof}

\begin{corollary}
\label{uncolor2}
Let $m,n \in \natt$.  If there exists an $r$ where
$\frac{m(m-1)}{2} < r \le n$ and $\ceil{\frac{mn}{c}} = n+r$, then
$G_{m,n}$ is not $c$-colorable.
\end{corollary}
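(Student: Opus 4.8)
The plan is to argue by contradiction, combining Theorem~\ref{th:connection} (a $c$-colorable grid contains a large rectangle-free set) with the $q=1$ case of Lemma~\ref{le:density2} (a rectangle-free set that is too dense forces a small upper bound on $r$). In other words, the corollary is just the contrapositive packaging of those two facts in a form convenient for later computations of $\OBS_3$ and $\OBS_4$.

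Concretely, suppose for contradiction that $\Gmn$ is $c$-colorable. By Theorem~\ref{th:connection}, $\Gmn$ contains a rectangle-free subset $A$ with $|A| = \ceil{\frac{mn}{c}}$. By hypothesis there is an $r$ with $\frac{m(m-1)}{2} < r \le n$ and $\ceil{\frac{mn}{c}} = n+r$, so $|A| = n+r$. Setting $a = |A|$, we then have $a = qn + r$ with $q = 1$ and $0 \le r \le n$, which is exactly the hypothesis of Lemma~\ref{le:density2} in its $q=1$ case.

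Applying the $q=1$ conclusion of Lemma~\ref{le:density2} to the rectangle-free set $A \subseteq \Gmn$ of size $a = 1\cdot n + r$ yields $r \le \frac{m(m-1)}{2}$. This contradicts the standing assumption $r > \frac{m(m-1)}{2}$. Hence no $c$-coloring of $\Gmn$ can exist, which is the claim.

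The only point that needs care is bookkeeping with the index conventions of Lemma~\ref{le:density2}: one must line up which coordinate plays the role of the ``$n$'' in the decomposition $a = qn + r$ and which coordinate contributes the $\binom{m}{2}$ on the right-hand side of the density inequality, so that applying the lemma to $A \subseteq \Gmn$ genuinely produces the bound $r \le \frac{m(m-1)}{2}$ rather than $r \le \frac{n(n-1)}{2}$. Once the conventions are matched there is no real obstacle, since all the combinatorial work has already been done in Lemmas~\ref{le:binom} and~\ref{le:density2}; the corollary is essentially a one-line deduction from them.
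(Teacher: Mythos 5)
Your proof is correct and is exactly the intended derivation: the paper states this corollary without proof as an immediate consequence of Theorem~\ref{th:connection} and the $q=1$ case of Lemma~\ref{le:density2}, which is precisely the contradiction you set up (and your care about which coordinate supplies the $\binom{m}{2}$ bound is warranted, since the paper's own indexing in Lemma~\ref{le:density2} is loose on this point).
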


\begin{corollary}
\label{uncolor3}
Let $n,m\in\natt$.  Let $\lceil \frac{nm}{c} \rceil = qn+r$ for some $0
\le r \le n$ and $q \ge  2$.  If $\frac{m(m-1)-2qr}{q(q-1)} < n$ then
$\Gnm$ is not $c$-colorable.
\end{corollary}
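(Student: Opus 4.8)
The plan is to obtain this as an immediate contrapositive of part~1 of Lemma~\ref{le:density2}, routed through the counting connection in Theorem~\ref{th:connection}. First I would assume, toward a contradiction, that $\Gnm$ \emph{is} $c$-colorable. Then Theorem~\ref{th:connection} guarantees a rectangle-free subset $A \subseteq \Gnm$ with $|A| = a$, where $a := \ceil{\frac{nm}{c}}$. By the hypothesis of the corollary, $a = qn + r$ with $0 \le r \le n$ and $q \ge 2$, so all the hypotheses of Lemma~\ref{le:density2}(1) are satisfied (in particular $q(q-1) \ne 0$, which is exactly what that lemma's division step needs). Applying it yields $n \le \bigfloor{\dfrac{m(m-1)-2rq}{q(q-1)}}$.

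The second and last step is to note that this conclusion is incompatible with the corollary's assumption $\frac{m(m-1)-2qr}{q(q-1)} < n$. Since $n$ is an integer and the floor of a real number strictly below $n$ is at most $n-1$, we get $\bigfloor{\dfrac{m(m-1)-2rq}{q(q-1)}} \le \dfrac{m(m-1)-2rq}{q(q-1)} < n$, so the two inequalities together give $n < n$, a contradiction. Hence $\Gnm$ is not $c$-colorable.

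There is essentially no obstacle: all the real work lives in Lemma~\ref{le:density2}, and the only point requiring any care is the elementary observation that $\lfloor x \rfloor < n$ whenever $x < n$ and $n \in \nat$, which is what makes the lower bound from the lemma and the upper bound from the hypothesis collide. It is also worth remarking that the statement is automatically consistent with the $q=1$ branch of Lemma~\ref{le:density2} being excluded here, since the corollary explicitly assumes $q \ge 2$; no separate case analysis is needed.
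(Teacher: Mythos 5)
Your proof is correct and is exactly the intended derivation: the paper gives no separate argument for this corollary, treating it as the immediate contrapositive of Lemma~\ref{le:density2}(1) applied to the rectangle-free set of size $\ceil{nm/c}$ furnished by Theorem~\ref{th:connection}, which is precisely what you wrote.
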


We now show that, for all $c$,  $G_{c^2,c^2+c+1}$  is not $c$-colorable.
This is particularly interesting because 
by Theorem~\ref{th:primepowersqplus}, for $c$ a prime power,
$G_{c^2,c^2+c}$ is $c$-colorable.

\begin{corollary}\label{co:lowerboundcsq}
For all $c\ge 2$ $G_{c^2,c^2+c+1}$ is not $c$-colorable.
\end{corollary}

\begin{proof}
If $G_{c^2,c^2+c+1}$ is $c$-colorable then there exists a rectangle free subset
of $G_{c^2,c^2+c+1}$ of size $\frac{c^2(c^2+c+1)}{c} = c(c^2+c+1)$.
Let $a=c(c^2+c+1)$, $n=c^2+c+1$, and $m=c^2$ in Theorem~\ref{th:density}.
Then $q=c$ and $r=0$. 
By that lemma we have

$$n\le \bigfloor{\frac{m(m-1)-2rq}{q(q-1)}}$$

we should have

$$c^2+c+1 \le \bigfloor{\frac{c^2(c^2-1)}{c(c-1)}}= c(c+1)=c^2+c$$

This is a contradiction.
\end{proof}

\begin{note}
In the Appendix we use the results of this section to find
the sizes of maximum rectangle free sets.
\end{note}

\begin{corollary}\label{uncolor2a}~
\begin{enumerate}
\item
Let $c\ge 2$ and $1 \le c' < c$.  
Let $n> \frac{c}{c'}\binom{c+c'}{2}$.
Then $G_{n,c+c'}$ is not $c$-colorable.
\item
Let $c\ge 2$ and $1 \le c' < c$.  
Let $m> \frac{c}{c'}\binom{c+c'}{2}$.
Then $G_{c+c',m}$ is not $c$-colorable.
(This follows immediately from part $a$.)
\end{enumerate}
\end{corollary}

\begin{proof}
Assume, by way of contradiction, that $G_{n,c+c'}$ is $c$-colorable.
Then there is a rectangle free set of size

$$\ceil{\frac{n(c+c')}{c}} = \ceil{n + \frac{c'n}{c}} = n+ \ceil{\frac{c'}{c}n}.$$

Since $c'<c$ we have

$$\ceil{\frac{n(c+c')}{c}} = n+ \ceil{\frac{c'}{c}n}\le n+ \ceil{\frac{c-1}{c}n} = n+\ceil{n-\frac{n}{c}}.$$
The premise of this corollary implies $c<n$. Hence 
$$\ceil{\frac{n(c+c')}{c}} \le  n+\ceil{n-\frac{n}{c}}\le 2n-1.$$
Therefore when we divide $n$ into 
$r=\ceil{\frac{c'n}{c}}$.

$$\ceil{\frac{n(c+c')}{c}}= n + \ceil{\frac{c'n}{c}}.$$

We want to apply Corollary~\ref{uncolor2} with $m=c+c'$ and $r=\ceil{\frac{c'n}{c}}$.
We need 

$$\frac{m(m-1)}{2} < r \le n.$$

$$\frac{(c+c')(c+c'-1)}{2} < \ceil{\frac{c'n}{c}} \le n.$$

The second inequality is obvious. 
The first inequality follows from $n> \frac{c}{c'}\binom{c+c'}{2}$.

\end{proof}

\begin{note}
In the Appendix we use the results of this section to find
the sizes of maximum rectangle free sets.
\end{note}

\subsection{Using maxrf}

\begin{notation}
If $n,m\in \natt$ and $A\subseteq\Gnm$.
\begin{enumerate}
\item
We will denote that $(a,b)\in A$
by putting an $R$ in the $(a,b)$ position.
\item
For $1\le j\le m$, 
$x_j$ is the number of elements of $A$ in column $j$.
\item
For $1\le j\le m$, $C_j$ is the set of rows $r$ such that
$A$ has an element in the $r^\tth$ row of column $j$.
Formally
$$C_j = \{ r \st (r,j)\in A \}.$$
\end{enumerate}
\end{notation}

\begin{definition}
Let $n,m\in\natt$ and $A\subseteq \Gnm$.
Let $1\le i_1 < i_2 \le n$.
$C_{i_1}$ and $C_{i_2}$ \emph{intersect} 
if $C_{i_1}\cap C_{i_2} \ne \es$.
\end{definition}

\begin{lemma}\label{le:maxrf1}
Let $n,m\in\natt$. 
Let $x_1\le n$.
Assume $(x_1,\ldots,x_m)$ is $(n,m)$-placeable via $A$.
Then
$$|A| \le x_1 + m-1 +  \maxrf {n-x_1}{m-1}.$$
\end{lemma}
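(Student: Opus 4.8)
The plan is to separate $A$ into its first column and the remaining $m-1$ columns, bound the contribution of those $m-1$ columns by deleting the rows occupied by the first column, and then trade the bound $\maxrf{n-x_1}{m-1}$ that this produces down to $\maxrf{n-x}{m-1}$ using the easy monotonicity of $\maxrf$ in the number of rows. Throughout I use the standing convention that column $1$ is largest, so $|C_1|=x_1\le n$, and I take $m\ge 2$ (the step that replaces $x_1$ by $x$ genuinely needs at least one column to spare).

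First I would set $A' = A \cap \big([n]\times\{2,\ldots,m\}\big)$, the restriction of $A$ to columns $2,\ldots,m$. It is rectangle-free and $|A'| = |A| - x_1$. By Lemma~\ref{le:overlap}, $|C_j\cap C_1|\le 1$ for every $j$ with $2\le j\le m$, so $A'$ has at most $m-1$ entries in total inside the $x_1$ rows belonging to $C_1$. Deleting those rows leaves a rectangle-free subset of a grid with $n-x_1$ rows and $m-1$ columns, of size at least $|A'|-(m-1)$; hence $|A'|\le (m-1)+\maxrf{n-x_1}{m-1}$, and therefore
$$|A| \le x_1 + (m-1) + \maxrf{n-x_1}{m-1}.$$

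It remains to replace $x_1$ by $x$, and this is the only step that is not pure bookkeeping. I claim $x_1 + \maxrf{n-x_1}{m-1} \le x + \maxrf{n-x}{m-1}$ whenever $x\le x_1$. This follows by applying, $x_1-x$ times, the inequality $\maxrf{k+1}{\ell}\ge \maxrf{k}{\ell}+1$ (valid for $\ell\ge 1$): from an optimal rectangle-free subset of $G_{k,\ell}$ one obtains a rectangle-free subset of $G_{k+1,\ell}$ that is larger by one, simply by filling a single cell of a fresh row, since a row with only one filled cell cannot be a side of a rectangle. Combining this with the displayed bound gives $|A|\le x+(m-1)+\maxrf{n-x}{m-1}$, as required. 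The main obstacle is precisely noticing that the naive decomposition only yields the coefficient $x_1$, not $x$, and that the gap is closed by this one-row monotonicity of $\maxrf$; the row-deletion and relabeling steps, and the verification that $n-x_1\ge 0$ so the symbols are meaningful, are routine.
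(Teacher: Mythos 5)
Your proof is correct and follows essentially the same decomposition as the paper: column $1$ contributes $x_1$, the remaining columns contribute at most one element each within the rows of $C_1$ (giving $m-1$), and the rest lies in an $(n-x_1)\times(m-1)$ subgrid bounded by $\maxrf{n-x_1}{m-1}$. The only addition is that you explicitly prove the monotonicity step $\maxrf{k+1}{\ell}\ge\maxrf{k}{\ell}+1$ that the paper leaves as an exercise, and your observation that this step needs $m\ge 2$ is a fair point.
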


\begin{proof}
The picture in Table~\ref{ta:3grid} portrays what might happen.
We use double lines to partition the grid
in a way that will be helpful later.

\begin{table}[htbp]
\[
\begin{array}{|c||c||c|c|c|c|c|c|c|c|}
\hline 
  & 1 & 2 & 3&4&5& \ldots & j & \cdots  & m\cr
\hline
\hline
1 & R & R &  & & & \cdots &   & \cdots        &\cr
\hline
2 & R &   & R& &   &\cdots &  &\cdots &\cr
\hline
3 & R &   &  &R&   &\cdots & &\cdots &\cr
\hline
\vdots  & R &   &  & & R &\ldots & &\cdots &\cr
\hline
x_1 & R & ? & ?& & ? &\cdots &?&\cdots & ?\cr
\hline
\hline
x_1+1 &   & ? &? &?& ? &\cdots &?&\cdots  &?\cr
\hline
x_1+2 &   & ? &?&?&?  &\cdots &?&\cdots  &?\cr
\hline
\vdots &   &?  &?&?&?  &\cdots &?&\cdots  &?\cr
\hline
n &   & ? &?&?& ? &\cdots &?&\cdots  &?\cr
\hline
\end{array}
\]
\caption{The Grid in Three Parts\label{ta:3grid}}
\end{table}

\noindent
{\bf Part 1:}
The first column. This has $x_1$ elements of $A$ in it.

\medskip

\noindent
{\bf Part 2:}
Consider the grid consisting of rows $1,\ldots,x_1$
and columns $2,\ldots,m$.
Look at the $j^\tth$ column, $2\le j\le m$ in this grid.
For each such $j$, this column has at most one element in $A$
(else there would be a rectangle using the first column).
Hence the total number of elements of $A$ from this part of
the grid is $m-1$.
(We drew them in a diagonal pattern though this is not required.)

\medskip

\noindent
{\bf Part 3:}
The bottom most $n-x_1$ elements of the right most $m-1$ columns.
This clearly has $\le \maxrf {n-x_1} {m-1}$ elements in it.
We do not know which elements will be taken so we just use ?'s.

Taking all the parts into account we obtain

$$|A|\le x_1+ (m-1) + \maxrf {n-x_1} {m-1}.$$
\end{proof}

\section{Tools for Finding $c$-colorings}\label{se:findprop}

\subsection{Strong $c$-colorings and Strong $(c,c')$-colorings}

\begin{definition}
Let $c,c',n,m\in\natt$ and
let $\chi:\Gnm \into [c]$.
Assume $c'\le c$.
\begin{enumerate}
\item
A {\it half-mono rectangle with respect to $\chi$} is a rectangle
where the left corners are the same color and the right corners
are the same color.
\item
$\chi$ is a \emph{strong $c$-coloring} if there
are no half-mono rectangles.
\item
$\chi$ is a \emph{strong $(c,c')$-coloring}
if for any half-mono rectangle
the color of the left corners and the right corners are (1) different, and
(2) in $[c']$.
\end{enumerate}
\end{definition}

\begin{example}\label{ex:strong}~
\begin{enumerate}
\item
Table~\ref{ta:s58} is a strong $4$-coloring of $G_{5,8}$.

\begin{table}[htbp]
\[
\begin{array}{|c|c|c|c|c|c|c|c|}
\hline
1 & 1 & 1 & 4 & 1 & 1 & 4 & 4\cr
\hline
2 & 2 & 4 & 1 & 2 & 4 & 1 & 4\cr
\hline
3 & 4 & 2 & 2 & 4 & 2 & 4 & 1\cr
\hline
4 & 3 & 3 & 3 & 4 & 4 & 2 & 2\cr
\hline
4 & 4 & 4 & 4 & 3 & 3 & 3 & 3\cr
\hline
\end{array}
\]
\caption{Strong 4-coloring of $G_{5,8}$\label{ta:s58}}
\end{table}

\item
Table~\ref{ta:s46} is a strong $3$-coloring of $G_{4,6}$.

\begin{table}[htbp]
\[
\begin{array}{|c|c|c|c|c|c|}
\hline
1 & 1 & 3 & 1 & 3 & 3 \cr
\hline
2 & 3 & 1 & 3 & 1 & 3 \cr
\hline
3 & 2 & 2 & 3 & 3 & 1 \cr
\hline
3 & 3 & 3 & 2 & 2 & 2 \cr
\hline
\end{array}
\]
\caption{Strong 3-coloring of $G_{4,6}$\label{ta:s46}}
\end{table}

\item
Table~\ref{ta:s42615} is a strong $(4,2)$-coloring of $G_{6,15}$.

\begin{table}[htbp]
\[
\begin{array}{|c|c|c|c|c|c|c|c|c|c|c|c|c|c|c|}
\hline
1 & 1 & 1 & 1 & 1 & 3 & 3 & 3 &2&3 &3 &2 &2  & 2 & 2\cr
\hline
1 & 2 & 2 & 2 & 2 & 1 & 1 & 1 &1&4 &4 &3 &3 &3 & 2 \cr
\hline
2 & 1 & 3 & 3 & 2 & 1 & 2 & 2 &2&1 &1 &1 &4 &4 & 3 \cr
\hline
2 & 2 & 1 & 4 & 3 & 2 & 1 & 4 &3&1 &2 &2 &1  &1 & 4 \cr
\hline
3 & 3 & 2 & 1 & 4 & 2 & 2 & 1 &4&2 &1 &4 &1  &2 &1 \cr
\hline
4 & 4 & 4 & 2 & 1 & 4 & 4 & 2 &1&2 &2 &1 &2 & 1 &1 \cr
\hline
\end{array}
\]
\caption{Strong $(4,2)$-coloring of $G_{6,15}$\label{ta:s42615}}
\end{table}

\item
Table~\ref{ta:s6286} is a strong $(6,2)$-coloring of $G_{8,6}$.

\begin{table}[htbp]
\[
\begin{array}{|c|c|c|c|c|c|}
\hline
1 & 1 & 2 & 2 & 3 & 6 \cr
\hline
1 & 2 & 1 & 2 & 4 & 5 \cr
\hline
2 & 1 & 2 & 1 & 5 & 4\cr
\hline
2 & 2 & 1 & 1 & 6 & 3\cr
\hline
3 & 4 & 5 & 6 & 1 & 2\cr
\hline
4 & 5 & 6 & 4 & 1 & 1\cr
\hline
5 & 6 & 3 & 3 & 1 & 2\cr
\hline
6 & 3 & 4 & 5 & 1 & 2\cr
\hline
\end{array}
\]
\caption{Strong $(6,2)$-coloring of $G_{8,6}$\label{ta:s6286}}
\end{table}

\item
Table~\ref{ta:s53828} is a $(5,3)$-coloring of $G_{8,28}$.

\begin{table}[htbp]
\[
\begin{array}{|c|c|c|c|c|c|c|c|c|c|c|c|c|c|c|c|c|c|c|c|c|c|c|c|c|c|c|c|}
\hline
1 & 1 & 1 & 1 & 1 & 1 & 1  & 5 & 5 & 5 & 5 & 3 & 2 & 4 & 3 & 4 & 3 & 2 & 3 & 4 & 3 & 2 & 3 & 3 & 2 & 2 & 2 & 2 \cr
\hline
1 & 2 & 2 & 2 & 2 & 2 & 2  & 1 & 1 & 1 & 1 & 1 & 1 & 5 & 4 & 5 & 4 & 3 & 4 & 3 & 4 & 3 & 3 & 4 & 3 & 3 & 3 & 2 \cr
\hline
2 & 1 & 3 & 3 & 3 & 3 & 2  & 1 & 2 & 2 & 2 & 2 & 2 & 1 & 1 & 1 & 1 & 1 & 5 & 5 & 5 & 4 & 4 & 3 & 4 & 3 & 4 & 3 \cr
\hline
2 & 2 & 1 & 4 & 4 & 4 & 3  & 2 & 1 & 3 & 3 & 3 & 3 & 1 & 2 & 2 & 2 & 2 & 1 & 1 & 1 & 1 & 5 & 5 & 5 & 4 & 3 & 3 \cr
\hline
3 & 3 & 2 & 1 & 5 & 3 & 3  & 2 & 2 & 1 & 4 & 4 & 4 & 2 & 1 & 3 & 3 & 3 & 1 & 2 & 2 & 2 & 1 & 1 & 1 & 5 & 5 & 4 \cr
\hline
3 & 4 & 3 & 2 & 1 & 5 & 4  & 3 & 3 & 2 & 1 & 5 & 3 & 2 & 2 & 1 & 5 & 4 & 2 & 1 & 3 & 3 & 1 & 2 & 2 & 1 & 1 & 5 \cr
\hline
4 & 3 & 4 & 3 & 2 & 1 & 5  & 3 & 4 & 3 & 2 & 1 & 5 & 3 & 3 & 2 & 1 & 5 & 2 & 2 & 1 & 5 & 2 & 1 & 3 & 1 & 2 & 1 \cr
\hline
5 & 5 & 5 & 5 & 3 & 2 & 1  & 4 & 3 & 4 & 3 & 2 & 1 & 3 & 5 & 3 & 2 & 1 & 3 & 3 & 2 & 1 & 2 & 2 & 1 & 2 & 1 & 1 \cr
\hline
\end{array}
\]
\caption{Strong $(5,3)$-coloring of $G_{8,28}$\label{ta:s53828}}
\end{table}

\end{enumerate}
\end{example}

\begin{theorem}\label{th:strong}
Let $c,c',n,m\in\natt$. Let $x=\floor{c/c'}$.
If $\Gnm$ is strongly $(c,c')$-colorable
then $G_{n,xm}$ is $c$-colorable.
\end{theorem}

\begin{proof}

Let $\chi$ be a strong $(c,c')$-coloring of $\Gnm$.
Let the colors be $\{1,\ldots,c\}$.
Let $\chi^i$ be the coloring
$$\chi^i(a,b) = \chi(a,b)+i \pmod c.$$
(During calculations mod $c$ we use $\{1,\ldots,c\}$
instead of the more traditional $\{0,\ldots,c-1\}$.)

Take $\Gnm$ with coloring $\chi$.
Place next to it $\Gnm$ with coloring $\chi^{c'}$.
Then place next to that $\Gnm$ with coloring $\chi^{2c'}$
Keep doing this until you have $\chi^{(x-1)c'}$ placed.
Table~\ref{ta:u62} is an
example using the strong $(6,2)$-coloring
of $G_{8,6}$ in Example~\ref{ex:strong}.4 to obtain a 6-coloring of $G_{8,18}$.
Since $c'=2$ and $x=3$ we will be shifting the colors
first by 2 then by 4.

\begin{table}[htbp]
\[
\begin{array}{cccccc|cccccc|cccccc}
1 & 1 & 2 & 2 & 3 & 6 	& 3 & 3 & 4 & 4 & 5 & 2 &   5 & 5 & 6 & 6 & 1 & 4 \cr
1 & 2 & 1 & 2 & 4 & 5   & 3 & 4 & 3 & 4 & 6 & 1 &   5 & 6 & 5 & 6 & 2 & 3 \cr
2 & 1 & 2 & 1 & 5 & 4   & 4 & 3 & 4 & 3 & 1 & 6 &   6 & 5 & 6 & 5 & 3 & 2 \cr
2 & 2 & 1 & 1 & 6 & 3   & 4 & 4 & 3 & 3 & 2 & 5 &   6 & 6 & 5 & 5 & 4 & 1 \cr
3 & 4 & 5 & 6 & 1 & 2   & 5 & 6 & 1 & 2 & 3 & 4 &   1 & 2 & 3 & 4 & 5 & 6 \cr
4 & 5 & 6 & 4 & 1 & 1   & 6 & 1 & 2 & 6 & 3 & 3 &   2 & 3 & 4 & 2 & 5 & 5 \cr
5 & 6 & 3 & 3 & 1 & 2   & 1 & 2 & 5 & 5 & 3 & 4 &   3 & 4 & 1 & 1 & 5 & 6 \cr
6 & 3 & 4 & 5 & 1 & 2   & 2 & 5 & 6 & 1 & 3 & 4 &   4 & 1 & 2 & 3 & 5 & 6 \cr
\end{array}
\]
\caption{Using the Strong $(6,2)$-coloring of $G_{8,6}$ to get a 6-coloring of $G_{8,18}$\label{ta:u62}}
\end{table}

We claim that the construction always creates a $c$-coloring of
$G_{m,xn}$.

We show that there is no rectangle with the two leftmost
points from the first $\Gnm$.  From this, to show that there are no
rectangles at all is just a matter of notation.

Assume that in column $i_1$ there are two points colored $R$
(in this proof $1\le R,B,G\le c$.)  We call
these \emph{the $i_1$-points}.  The points cannot form a rectangle
with any other points in $\Gnm$ since $\chi$ is a $c$-coloring
of $\Gnm$.  The $i_1$-points cannot form a rectangle with points in
columns $i_1+m$, $i_1+2m$, $\ldots$, $i_1+(c-1)m$ since the colors of
those points are $R+c'\pmod c$, $R+2c'\pmod c$, $\ldots$,
$R+(x-1)c'\pmod c$, all of which are not equal to $R$.  Is there a $j$, 
$1\le j\le x-1$ and a $i_2$, $1\le i_2\le m$ such that the $i_1$-points form
a rectangle with points in column $i_2+jm$?

Since $\chi$ is a strong $(c,c')$-coloring,
points in column $i_2$ and on the same row as the $i_1$-points
are either colored \emph{differently}, or both colors are in $[c']$.
We consider both of these cases.

\noindent
{\bf Case 1:}
In column $i_2$ the colors are $B$ and $G$ where $B\ne G$
(it is possible that $B=R$ or $G=R$ but not both).  
By the construction, the points in column 
$i_2+jm$ are colored $B+jc' \pmod c$ and $G+jc'\pmod c$.
These points are colored differently, hence they cannot form a rectangle
with the $i_1$-points.

\[
\begin{array}{ccccc|ccccc}
\cdots & i_1 & \cdots   & i_2 & \cdots & \cdots &  i_1+jm    & \cdots& i_2+jm &  \cdots \cr
\hline
\cdots & R   & \cdots   & B  & \cdots   & \cdots &  R+jc'&\cdots & B+jc'  & \cdots \cr
\cdots & R   & \cdots   & G  & \cdots   & \cdots  & R+jc'&\cdots & G+jc' & \cdots \cr
\end{array}
\]

\bigskip

\noindent
{\bf Case 2:}
In column $i_2$ the colors are both $B$.

\[
\begin{array}{ccccc|ccccc}
\cdots & i_1 & \cdots & i_2  & \cdots & \cdots  & i_1+jm    & \cdots& i_2+jm  & \cdots \cr
\hline
\cdots &  R   & \cdots   & B &  \cdots   & \cdots  & R+jc'&\cdots & B+jc' & \cdots \cr
\cdots &  R   & \cdots   & B  & \cdots   & \cdots  & R+jc'&\cdots & B+jc' & \cdots \cr
\end{array}
\]

We have $R,B\in [c']$.
By the construction, the points in column 
$i_2+jm$ are both colored $B+jc'\pmod c$.
We show that $R\not\equiv B+jc'\pmod c$.
Since $1\le j\le x-1$ we have
$$c' \le jc' \le (x-1)c'.$$
Hence
$$B+c' \le B+jc' \le B+(x-1)c'.$$
Since $B\in [c']$ we have $B+(x-1)c' \le xc'$.
Hence
$$B+c' \le B+jc' \le xc'.$$
By the definition of $x$ we have $xc' \le c$.
Since $B\in [c']$ we have $B+c' \ge c'+1$.
Hence
$$c'+1 \le B+jc' \le c.$$
Since $R\in [c']$ we have that $R\not\equiv B+jc'$.
\end{proof}

\subsection{Using Combinatorics and Strong $(c,c')$-colorings}

\begin{theorem}\label{th:cplusone}
Let $c\ge 2$. 
\begin{enumerate}
\item
There is a strong $c$-coloring of $G_{c+1,\binom{c+1}{2}}$.
\item
There is a $c$-coloring of $G_{c+1,m}$ where $m=c \binom{c+1}{2}$.
\end{enumerate}
\end{theorem}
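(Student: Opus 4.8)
The plan is to derive part (2) immediately from part (1) by an application of Lemma~\ref{le:strong}, so that the real content lies in the explicit construction for part (1). For the reduction: by the Note that defines strong $c$-colorings (the one immediately following the definition of strong $(c,c')$-colorings), a strong $c$-coloring is precisely a strong $(c,1)$-coloring. Hence, once part (1) is in hand, I would apply Lemma~\ref{le:strong} with $c'=1$, so that $x=\floor{c/1}=c$: a strong $(c,1)$-coloring of $G_{c+1,\binom{c+1}{2}}$ produces a $c$-coloring of $G_{c+1,\,c\binom{c+1}{2}}$, which is exactly the statement of part (2) with $m=c\binom{c+1}{2}$.

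For part (1), the first step is to unwind the definition. With $c'=1$ the condition "$c_1\ne c_2$ and $c_1,c_2\in[1]$" is never satisfiable, so a strong $c$-coloring is exactly a coloring $\chi:G_{c+1,\binom{c+1}{2}}\to[c]$ admitting \emph{no} rectangle whose two left corners have a common color and whose two right corners have a common color; equivalently, for every pair of rows $\{i,i'\}\subseteq[c+1]$ there is at most one column $j$ on which $\chi$ is \emph{constant}, i.e.\ with $\chi(i,j)=\chi(i',j)$. The key numerical observation is that $G_{c+1,\binom{c+1}{2}}$ has exactly $\binom{c+1}{2}$ columns, which is exactly the number of two-element subsets of its $c+1$ rows. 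So I would fix a bijection between the columns and these row-pairs and design the column matched to $\{i,i'\}$ so that its \emph{unique} constant row-pair is $\{i,i'\}$ itself: put color $1$ on both rows $i$ and $i'$ of that column, and put the $c-1$ remaining colors $2,\ldots,c$, one on each of the $c-1$ remaining rows, in any fixed order. Then in that column color $1$ occurs exactly twice and every other color exactly once, so $\{i,i'\}$ is the only pair of rows on which the column is constant. Consequently, for any pair of rows $\{a,b\}$ the only column that is constant on $\{a,b\}$ is the one matched to $\{a,b\}$, so at most one column is constant on $\{a,b\}$; hence no rectangle has matching left and matching right vertical pairs, and $\chi$ is a strong $c$-coloring of $G_{c+1,\binom{c+1}{2}}$.

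I do not anticipate a genuine obstacle: the argument is essentially a definition-chase together with the trivial counting identity "number of columns $=$ number of row-pairs." The one point that merits care is verifying that the $c'=1$ instance of the strong-coloring condition really forbids \emph{every} rectangle whose left two corners agree and whose right two corners agree — even when the left color differs from the right color — since this is precisely what makes the simple one-collision-per-column construction sufficient. As a sanity check one can note that for $c=3$ this construction reproduces, up to relabeling rows/columns/colors, the strong $3$-coloring of $G_{4,6}$ exhibited in Example~\ref{ex:strong}, and for $c=2$ it gives the $3\times 3$ array with the color $1$ (say) appearing exactly along the three "anti-pattern" pairs of rows.
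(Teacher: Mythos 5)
Your proposal is correct and is essentially the paper's own proof: part (2) is obtained from part (1) via Lemma~\ref{le:strong} with $c'=1$, and part (1) uses the same construction of indexing the $\binom{c+1}{2}$ columns by row-pairs and making each column constant only on its own pair (the paper puts color $c$ on the distinguished pair rather than color $1$, an immaterial relabeling). Your explicit verification that the distinguished pair is the \emph{unique} monochromatic row-pair in its column is the same counting point the paper leaves implicit.
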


\begin{proof}

\noindent
1) We first do an example of our construction. In the $c=5$ case
we obtain the coloring in Table~\ref{ta:s615}

\begin{table}[htbp]
\[
\begin{array}{|c|c|c|c|c|c|c|c|c|c|c|c|c|c|c|}
\hline
5 & 5 & 5 & 5 & 5 & 1 & 1 & 1 & 1 & 1 & 1 & 1 & 1 & 1 & 1 \cr
\hline
5 & 1 & 1 & 1 & 1 & 5 & 5 & 5 & 5 & 2 & 2 & 2 & 2 & 2 & 2 \cr
\hline
1 & 5 & 2 & 2 & 2 & 5 & 2 & 2 & 2 & 5 & 5 & 5 & 3 & 3 & 3 \cr
\hline
2 & 2 & 5 & 3 & 3 & 2 & 5 & 3 & 3 & 5 & 3 & 3 & 5 & 5 & 4 \cr
\hline
3 & 3 & 3 & 5 & 4 & 3 & 3 & 5 & 4 & 3 & 5 & 4 & 5 & 4 & 5 \cr 
\hline
4 & 4 & 4 & 4 & 5 & 4 & 4 & 4 & 5 & 4 & 4 & 5 & 4 & 5 & 5 \cr
\hline
\end{array}
\]
\caption{Strong 5-coloring of $G_{6,15}$\label{ta:s615}}
\end{table}

Index the columns by ${ \binom{[c+1]}{2} }$.
Color rows of column $\{x,y\}$, $x<y$,  as follows. 
\begin{enumerate}
\item
Color rows $x$ and $y$ with color $c$.
\item
On the other spots use the colors $\{1,2,3,\ldots,c-1\}$
in increasing order (the actual order does not matter).
\end{enumerate}

\bigskip

We call the coloring $\chi:\Gnm \into [c]$.
We show that there are no half-mono rectangles.
Let $RECT=\{p_1,p_2,q_1,q_2\}$ be a rectangle with
$p_1,p_2$ in column $\{x,y\}$
and $q_1,q_2$ in column $\{x',y'\}$.

If any of $p_1,p_2,q_1,q_2$ have a color in $\{1,\ldots,c-1\}$
then $RECT$ cannot be a half-mono rectangle since the colors
$\{1,\ldots,c-1\}$ only appear once in each column.

If $\chi(p_1)=\chi(p_2)=\chi(q_1)=\chi(q_2)=c$ then
$p_1$ and $p_2$ are in rows $x$ and $y$, and
$q_1$ and $q_2$ are in rows $x'$ and $y'$.
Since $RECT$ is a rectangle $\{x,y\}=\{x',y'\}$.
Hence $p_1,p_2,q_1,q_2$ are all in the same column.
This contradicts $RECT$ being a rectangle.

\smallskip

\noindent
2) This follows from Theorem~\ref{th:strong} with $c=c$ and $c'=1$,
and Part~(1) of this theorem.
\end{proof}

In order to generalize Theorem~\ref{th:cplusone} we need a lemma.
The lemma (and the examples)
is based on the Wikipedia entry on Round Robin tournaments; hence
we assume it is folklore.
We present a proof for completeness.

\begin{lemma}\label{le:rr}
Let $n\in \natt$.
\begin{enumerate}
\item
$\binom{[2n]}{2}$ can be 
partitioned into $2n-1$ sets $P_1,\ldots,P_{2n-1}$, each of size $n$,
such that each $P_i$ is itself a partition of
$[2n]$ into pairs (i.e., a perfect matching) and all of the $P_i$'s are disjoint.
\item
For each $i\in [2n+1]$ 
$\binom{[2n+1]}{2}$ can be
partitioned into $2n+1$ sets $P_1,\ldots,P_{2n+1}$, each of size $n$,
such that each $P_i$ is itself a partition of
$[2n+1]-\{i\}$ into pairs (i.e., a perfect matching) and all of the $P_i$'s are disjoint.
\end{enumerate}
\end{lemma}

\begin{proof}

\noindent
1) All arithmetic is mod $2n-1$ with two caveats:
(a) we will use $\{1,2,\ldots,2n-1\}$ rather than the more traditional $\{0,1,2,\ldots,2n-2\}$,
(b) we will use the number $2n$ and not set it equal to 1; however, $2n$
will not be involved in any calculations.
For $1\le i\le 2n-1$ we have the following partition $P_i$:

\[
\begin{array}{|c|c|c|c|c|c|c|}
2n & i+1 & i+2 & \cdots & i+n-3 & i+n-2& i+n-1 \cr
i     & i-1 & i-2 & \cdots & i-n+3 & i-n+2& i-n+1 \cr
\end{array}
\]

Formally 

$$P_i = \{2n,i\} \cup \{ \{ i+j,i-j\} \st 1\le j\le n-1 \}.$$

It is easy to see that each $P_i$ consists of disjoint pairs
and that the $P_i$'s are disjoint.

\noindent
Example: $n=4$. $1\le i\le 7$.

$P_1$
\[
\begin{array}{|c|c|c|c|}
8 & 2 & 3 & 4 \cr
1     & 7 & 6 & 5 \cr
\end{array}
\]

$P_2$
\[
\begin{array}{|c|c|c|c|}
8 & 3 & 4 & 5 \cr
2     & 1 & 7 & 6 \cr
\end{array}
\]

$P_3$
\[
\begin{array}{|c|c|c|c|}
8 & 4 & 5 & 6 \cr
3     & 2 & 1 & 7 \cr
\end{array}
\]

$P_4$
\[
\begin{array}{|c|c|c|c|}
8 & 5 & 6 & 7 \cr
4     & 3 & 2 & 1 \cr
\end{array}
\]

$P_5$
\[
\begin{array}{|c|c|c|c|}
8 & 6 & 7 & 1 \cr
5     & 4 & 3 & 2 \cr
\end{array}
\]

$P_6$
\[
\begin{array}{|c|c|c|c|}
8 & 7 & 1 & 2 \cr
6     & 5 & 4 & 3 \cr
\end{array}
\]

$P_7$
\[
\begin{array}{|c|c|c|c|}
8 & 1 & 2 & 3 \cr
7     & 6 & 5 & 4 \cr
\end{array}
\]

\bigskip

\noindent
2) We partition $[2n+1]$.
All arithmetic is be mod $2n+1$; however, we use $\{1,2,\ldots,2n+1\}$ 
rather than the more traditional $\{0,1,2,\ldots,2n\}$.
For $1\le i\le 2n+1$ we have the following partition $P_i$:

\[
\begin{array}{|c|c|c|c|c|c|c|}
i+1   & i+2 & i+3 & \cdots & i+n-3 & i+n-2& i+n \cr
i-1   & i-2 & i-3 & \cdots & i-n+3 & i-n+2& i-n \cr
\end{array}
\]

Formally 

$$P_i = \{  \{ i+j,i-j\} \st 1\le j\le n \}\}.$$

It is easy to see that each $P_i$ consists of disjoint pairs of $\{0,1,\ldots,2n\} - \{i\}$
and that the $P_i$'s are disjoint.

\noindent
Example: $n=3$. $1\le i\le 7$ and arithmetic is mod 7.

$P_1$
\[
\begin{array}{|c|c|c|}
2     & 3 & 4  \cr
7     & 6 & 5  \cr
\end{array}
\]

$P_2$
\[
\begin{array}{|c|c|c|}
3     & 4 & 5 \cr
1     & 7 & 6 \cr
\end{array}
\]

$P_3$
\[
\begin{array}{|c|c|c|}
4     & 5 & 6  \cr
2     & 1 & 7  \cr
\end{array}
\]

$P_4$
\[
\begin{array}{|c|c|c|}
5     & 6 & 7 \cr
3     & 2 & 1 \cr
\end{array}
\]

$P_5$
\[
\begin{array}{|c|c|c|}
6     & 7 & 1 \cr
4     & 3 & 2 \cr
\end{array}
\]

$P_6$
\[
\begin{array}{|c|c|c|}
7     & 1 & 2  \cr
5     & 4 & 3 \cr
\end{array}
\]

$P_7$
\[
\begin{array}{|c|c|c|}
1     & 2 & 3 \cr
6     & 5 & 4 \cr
\end{array}
\]

\bigskip

%
%
%
%
%
%
%
%
%
%

\end{proof}

\begin{theorem}\label{th:cplusgen}
Let $c,c'\in \natt$ with $c\ge 2$ and $1\le c'\le c$.
\begin{enumerate}
\item
There is a strong $(c,c')$-coloring of $G_{c+c',m}$ where $m= \binom{c+c'}{2}$.
\item
There is a $c$-coloring of $G_{c+c',m'}$ where $m'=\floor{c/c'}\binom{c+c'}{2}$.
\end{enumerate}
\end{theorem}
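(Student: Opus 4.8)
The plan is to establish Part~1 by an explicit construction generalizing the one in Theorem~\ref{th:cplusone}, and then to get Part~2 for free: given a strong $(c,c')$-coloring of $G_{c+c',\binom{c+c'}{2}}$, Lemma~\ref{le:strong} applied with $x=\floor{c/c'}$ yields a $c$-coloring of $G_{c+c',\,\floor{c/c'}\binom{c+c'}{2}}$, which is exactly what Part~2 asserts. So all the content is in Part~1.

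For Part~1, set $N=c+c'$. Since $c\ge c'$ we have $N\ge 2c'$; moreover if $N$ is odd then $c'<c$ (otherwise $N=2c$), so $N\ge 2c'+1$. In both cases $\floor{N/2}\ge c'$. First I would fix a proper edge-coloring of the complete graph $K_N$ using $N-1$ colors when $N$ is even (a $1$-factorization) and $N$ colors when $N$ is odd (a near-$1$-factorization), so that each color class $F_i$ is a matching; a routine count gives $|F_i|=\floor{N/2}$ for every $i$. Fix a cyclic order on the edges of each $F_i$. For an edge $e$ of $K_N$ sitting in class $F_i$ at cyclic position $p$, and for $1\le a\le c'$, let $\sigma_a(e)$ be the edge of $F_i$ at cyclic position $p+a-1$. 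Then $\sigma_1$ is the identity, each $\sigma_a$ is a bijection of the edge set of $K_N$ (it cyclically permutes each $F_i$, and the $F_i$ partition the edges), and for every $e$ the edges $\sigma_1(e),\dots,\sigma_{c'}(e)$ are $c'$ distinct members of the matching $F_i$, hence pairwise vertex-disjoint.

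Next I would define the coloring $\chi\colon G_{N,\binom{N}{2}}\to[c]$, identifying the set of columns with the edge set of $K_N$ and the set of rows with $[N]$: in the column indexed by an edge $e$, assign color $a$ to the two rows that are the endpoints of $\sigma_a(e)$, for each $a\in[c']$; by disjointness these are $2c'$ distinct rows, and the remaining $N-2c'=c-c'$ rows receive the colors $c'+1,\dots,c$, one each, in increasing row order. (The strong $(4,2)$-coloring of $G_{6,15}$ in Example~\ref{ex:strong}.3 has exactly this shape.) The verification rests on two observations about a single column $e$: a color in $\{c'+1,\dots,c\}$ appears in exactly one row of that column, whereas a color $a\in[c']$ appears in exactly the two endpoints of the edge $\sigma_a(e)$. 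Consequently, if some rectangle has its two left corners colored $c_2$ and its two right corners colored $c_1$, then both $c_1$ and $c_2$ must be $\le c'$ (a color exceeding $c'$ never repeats within a column); writing $e_L$ and $e_R$ for the left and right columns of the rectangle, the common pair of rows equals both $\sigma_{c_2}(e_L)$ and $\sigma_{c_1}(e_R)$, and if $c_1=c_2=\gamma$ this forces $e_L=e_R$ by injectivity of $\sigma_\gamma$ --- impossible for a genuine rectangle --- so $c_1\ne c_2$. Thus no rectangle is monochromatic (so $\chi$ is a legitimate coloring) and $\chi$ meets the strong $(c,c')$ requirement.

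The hard part is really just the construction of the maps $\sigma_a$: they must simultaneously be bijections of $\binom{[N]}{2}$ and, column by column, produce a $c'$-tuple of mutually disjoint row-pairs; the device that makes both happen at once is the (near-)$1$-factorization of $K_N$ together with the inequality $\floor{N/2}\ge c'$. The only other thing to watch is the even/odd split in choosing the factorization --- and the degenerate case $c'=c$, where $c-c'=0$ and every row of every column is occupied by one of the doubled colors --- but both are subsumed by the bound on $\floor{N/2}$ above. Everything past that point is bookkeeping.
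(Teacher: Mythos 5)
Your proposal is correct and follows essentially the same route as the paper: both reduce Part~2 to Part~1 via Lemma~\ref{le:strong}, and both prove Part~1 by partitioning $\binom{[c+c']}{2}$ into (near-)perfect matchings of $K_{c+c'}$ and assigning the $c'$ doubled colors by cyclic shifts within the matching containing the column's index, so that a doubled pair appearing in two columns forces two different shift amounts and hence two different colors. The only cosmetic differences are that you invoke the existence of the (near-)$1$-factorization as a standard fact where the paper constructs it explicitly (Lemma~\ref{le:rr} and the modular formula in the odd case), and you index columns by edges directly rather than by (block, position) pairs.
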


\begin{proof}

\noindent
1)  We split into two cases.

\noindent
{\bf Case 1: $c+c'$ is even.}  Then $c+c' = 2n$ for some $n$.
Since $c'\le c$, we also have $c' \le n$.  Let $P_1,\ldots,P_{2n-1}$
be the partition of $[2n]$ of Lemma~\ref{le:rr}.1.  
Index the elements
of each $P_i$ as $p_{i,j}$ for $1\le j\le n$, that is, $P_i =
\{p_{i,1},p_{i,2},\ldots,p_{i,n}\}$.  We partition the $\binom{c+c'}{2}$ columns into
$2n-1$ parts of $n$ columns each (note that $n(2n-1) =
\binom{2n}{2}$).  We color the $j^\tth$ column in the $i^\tth$ block as
follows:
\begin{itemize}
\item The $j^\tth$ column uses color $1$ in the two rows row indexed by $p_{i,(j+1)\bmod n}$,
\item The $j^\tth$ column uses color $2$ in the two rows row indexed by $p_{i,(j+2)\bmod n}$,
\item $\vdots$
\item The $j^\tth$ column uses color $c'$ in the two rows row indexed by $p_{i,(j+c')\bmod n}$,
\item The $j^\tth$ column uses the colors $c'+1,\ldots,c$ once each to the rest of
  the elements in the column. For definiteness use them in increasing order.
\end{itemize}

We show that this yields a strong $(c',c)$-coloring.
Assume there is a half-mono rectangle. Since every color in $\{c'+1,\ldots,c\}$
only appears once in a column we have that the left and right color are
both in $[c']$. We need only prove that they are different.
Assume, by way of contradiction, that the rectangle is monochromatic and colored $d$.
Assume that one columns is column $j_1$ in part $i_1$ and the other is column $j_2$ in part $i_2$.
It is possible that $i_1=i_2$ or $j_1=j_2$ but not both.

\bigskip

\noindent
{\bf Subcase 1:} $i_1=i_2=i$, so the two columns are in the same part.
By the construction that $p_{i,(j_1+d)\bmod n} = p_{i,(j_2+d)\bmod n}$.
Since all of the $p_{ij}$'s are different this means that
$j_1\equiv j_2 \pmod n$. Since $1\le j_1,j_2\le n$ we have $j_1=j_2$.

\noindent
{\bf Subcase 2:} $i_1\ne i_2$.
By the construction this means that $p_{i_1,(j_1+d)\bmod n} = p_{i_2,(j_2+d)\bmod n}$.
Since $P_{i_1}$ and $P_{i_2}$ are disjoint this cannot happen.

\bigskip

We now give some examples of colorings.

\noindent
Example: $c'=2$ and $c=6$. $2n=c+c'=8$ so $n=4$. Note that $c+c'=8$ and $\binom{c+c'}{2}=\binom{8}{2}=28$.
Our goal is to strongly 6-color $G_{8,28}$. We use the partitions $P_1,\ldots,P_7$ in the first example in
Lemma~\ref{le:rr}. 
We first partition the 28 columns of $G_{7,28}$ into $2n-1=7$ parts of $n=4$ each:: 
$\{1,2,3,4\}$, $\{5,6,7,8\}$, $\{9,10,11,12\}$, $\{13,14,15,16\}$,
$\{17,18,19,20\}$, $\{21,22,23,24\}$, $\{25,26,27,28\}$.

We color the $i^\tth$ ($1\le i\le 9$) set of columns using $P_i$ to tell
us where to put the 1's and 2's.

We describe the coloring of the first four columns carefully. The strong 6-coloring of $G_{8,28}$ is
then in Table~\ref{ta:s828}.
then fill in the rest in a similar manner.

\[
\begin{array}{|c|c|c|c|}
\hline
p_{11} & p_{12} & p_{13} & p_{14} \cr
\hline
8     & 2 & 3 & 4 \cr
1     & 7 & 6 & 5 \cr
\hline
\end{array}
\]

Fix $i=1$, so we are looking at the $1^\sst$ part (the first four columns).
Fix $j=1$, so we are looking at the $1^\sst$ column of the $1^\sst$ part (the first column).
We put a 1 in the rows indexed by $p_{i,j+1}=p_{1,2}$.
So we put 1 in the $2^\sst$ and $7^\tth$ rows of the first column.
We put a 2 in the rows indexed by $p_{i,j+2}=p_{1,3}$.
So we put 2 in the $3^\sst$ and $6^\tth$ rows of the first column.
The rest of the rows get ${3,4,5,6}$ in increasing order.

Fix $j=2$ (the second column of the first part, so the second column).
We put a 1 in the rows indexed by $p_{i,j+1}=p_{1,3}$.
So we put 1 in the $3^\sst$ and $6^\tth$ rows of the first column.
We put a 2 in the rows indexed by $p_{i,j+2}=p_{1,4}$.
So we put 2 in the $4^\sst$ and $5^\tth$ rows of the first column.
The rest of the rows get ${3,4,5,6}$ in increasing order.

Fix $j=3$ (the third column of the first part, so the third column).
We put a 1 in the rows indexed by $p_{i,j+1}=p_{1,4}$.
So we put 1 in the $4^\sst$ and $5^\tth$ rows of the first column.
We put a 2 in the rows indexed by $p_{i,j+2}=p_{1,1}$.
So we put 2 in the $4^\sst$ and $5^\tth$ rows of the first column.
The rest of the rows get ${3,4,5,6}$ in increasing order.

Fix $j=4$ (the fourth column of the first part, so the fourth column).
We put a 1 in the rows indexed by $p_{i,j+1}=p_{1,1}$.
So we put 1 in the $1^\sst$ and $8^\tth$ rows of the first column.
We put a 2 in the rows indexed by $p_{i,j+2}=p_{1,2}$.
So we put 2 in the $2^\sst$ and $7^\tth$ rows of the first column.
The rest of the rows get ${3,4,5,6}$ in increasing order.

\begin{table}[htbp]
\[
\begin{array}{|c|c|c|c||c|c|c|c||c|c|c|c||c|c|c|c||c|c|c|c||c|c|c|c||c|c|c|c|}
\hline
               3& 3& 2& 1& 1& 3& 3& 2& 2& 1& 3& 3& 3& 2& 1& 3& 3& 2& 1& 3& 2& 1& 3& 3& 1& 3& 3& 2 \cr
\hline
               1& 4& 3& 2& 3& 4& 2& 1& 1& 3& 4& 2& 2& 1& 3& 4& 4& 2& 1& 4& 3& 2& 1& 4& 2& 1& 4& 3\cr
\hline
               2& 1& 4& 3& 1& 5& 4& 2& 3& 4& 2& 1& 1& 3& 4& 2& 2& 1& 3& 5& 4& 2& 1& 5& 3& 2& 1& 4 \cr
\hline
               4& 2& 1& 4& 2& 1& 5& 3& 1& 5& 5& 2& 4& 4& 2& 1& 1& 3& 4& 2& 2& 1& 4& 6& 4& 2& 1& 5 \cr
\hline
               5& 2& 1& 5& 4& 2& 1& 4& 2& 1& 6& 4& 1& 5& 5& 2& 5& 4& 2& 1& 1& 3& 5& 2& 2& 1& 5& 6 \cr
\hline
               2& 1& 5& 6& 5& 2& 1& 5& 4& 2& 1& 5& 2& 1& 6& 5& 1& 5& 5& 2& 5& 4& 2& 1& 1& 4& 6& 2 \cr
\hline
               1& 5& 6& 2& 6& 6& 6& 6& 5& 2& 1& 6& 5& 2& 1& 6& 2& 1& 6& 6& 1& 5& 6& 2& 5& 5& 2&1  \cr
\hline
               6& 6& 2& 1& 2& 1& 2& 1& 6& 6& 2& 1& 6& 6& 2& 1& 6& 6& 2& 1& 6& 6& 2& 1& 6& 6& 2&1  \cr
\hline
\end{array}
\]
\caption{Strong 6-coloring of $G_{8,28}$\label{ta:s828}.}
\end{table}

\noindent
{\bf Case 2: $c+c'$ is odd.}
Let $c+c' = 2n+1$ for some $n$.  Since $c' < c$, we also
have $c' \le n$.  
Let $P_1,\ldots,P_{2n+1}$ be from Lemma~\ref{le:rr}.2.
Index the elements
of each $P_i$ as $p_{i,j}$ for $1\le j\le n$, that is, $P_i =
\{p_{i,1},p_{i,2},\ldots,p_{i,n}\}$.  We partition the $\binom{c+c'}{2}$ columns into
$2n+1$ parts of $n$ columns each (note that $n(2n+1) = \binom{2n+1}{2}$).  
The description of the coloring and the proof that it works are identical to that in Case 1,
hence we omit it.

\noindent
Example: $c'=3$ and $c=4$. $2n+1=c+c'=7$ so $n=3$. Note that $c+c'=7$ 
and $\binom{c+c'}{2}=\binom{7}{2}=21$.
Our goal is to strongly 5-color $G_{7,21}$. We use the partitions $P_1,\ldots,P_7$ in the second example in
Lemma~\ref{le:rr}. 
We first partition the 21 columns of $G_{7,21}$ into $2n+1=7$ parts of $n=3$ each:: 
$\{1,2,3\}$, $\{4,5,6\}$, $\{7,8,9\}$, $\{10,11,12\}$,
$\{13,14,15\}$, $\{16,17,18\}$, $\{19,20,21\}$

We color the $i^\tth$ ($1\le i\le 7$) set of columns using $P_i$ to tell
us where to put the 1's and 2's. The final coloring is in Table~\ref{ta:s721}.

\begin{table}[htbp]
\[
\begin{array}{|c|c|c||c|c|c||c|c|c||c|c|c||c|c|c||c|c|c||c|c|c|}
\hline
	       3& 3& 3&   3& 2& 1&   1& 3& 2&   2& 1& 3&   2& 1& 3&   1& 3& 2&   3& 2& 1    \cr
\hline
               4& 2& 1&   4& 3& 3&   3& 2& 1&   1& 3& 2&   2& 1& 4&   2& 1& 3&   1& 3& 2   \cr
\hline
               1& 4& 2&   5& 2& 1&   4& 4& 3&   3& 2& 1&   1& 3& 2&   2& 1& 4&   2& 1& 3    \cr
\hline
               2& 1& 4&   1& 4& 2&   5& 2& 1&   4& 4& 4&   3& 2& 1&   1& 4& 2&   2& 1& 4    \cr
\hline
               2& 1& 5&   2& 1& 4&   1& 5& 2&   5& 2& 1&   4& 4& 5&   3& 2& 1&   1& 4& 2    \cr
\hline
               1& 5& 2&   2& 1& 5&   2& 1& 4&   1& 5& 2&   5& 2& 1&   4& 5& 5&   4& 2& 2    \cr
\hline
               5& 2& 1&   1& 5& 2&   2& 1& 5&   2& 1& 5&   1& 5& 2&   5& 2& 1&   6& 5& 5    \cr
\hline
\end{array}
\]
\caption{Strong 5-coloring of $G_{7,21}$\label{ta:s721}.}
\end{table}

\noindent
2) This follows from Theorem~\ref{th:strong} and Part~(1) of this theorem.
\end{proof}

\begin{corollary}\label{co:rr}
For all $c\ge 2$, there is
a $c$-coloring of $G_{2c,2c^2-c}.$
\end{corollary}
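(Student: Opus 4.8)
The plan is to derive this immediately from Theorem~\ref{th:cplusgen}, part~(2), by choosing the extreme value $c' = c$. The hypotheses of that theorem require $c \ge 2$ and $1 \le c' \le c$; with $c' = c$ both are satisfied (for any $c \ge 2$), so nothing needs to be set up beyond this substitution.

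First I would record what the theorem produces under this choice: Theorem~\ref{th:cplusgen}(2) gives a $c$-coloring of $G_{c+c',m'}$ with $m' = \floor{c/c'}\binom{c+c'}{2}$. Substituting $c' = c$, the first index becomes $c + c' = 2c$, and the factor $\floor{c/c'} = \floor{c/c} = 1$ drops out, leaving $m' = \binom{2c}{2}$.

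Then I would carry out the one-line arithmetic simplification $\binom{2c}{2} = \frac{(2c)(2c-1)}{2} = c(2c-1) = 2c^2 - c$, so the coloring guaranteed by Theorem~\ref{th:cplusgen}(2) is in fact a $c$-coloring of $G_{2c,\,2c^2-c}$, which is exactly the claim.

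There is no real obstacle here: the corollary is just the ``diagonal'' specialization $c' = c$ of the preceding theorem, and the only thing to check is that $c' = c$ is a legal choice of parameter and that the formula for $m'$ collapses to $2c^2-c$. (If one wanted a self-contained remark, one could also note that with $c' = c$ one is in Case~1 of the proof of Theorem~\ref{th:cplusgen}, since $c + c' = 2c$ is even, and the construction there applies verbatim.)
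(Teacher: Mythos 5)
Your derivation is correct and is exactly the intended one: the paper states Corollary~\ref{co:rr} immediately after Theorem~\ref{th:cplusgen} with no separate proof, precisely because it is the $c'=c$ specialization, where $\floor{c/c}=1$ and $\binom{2c}{2}=c(2c-1)=2c^2-c$. Your side remark that $c+c'=2c$ puts you in the even case of the theorem's construction is also accurate.
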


\subsection{Using Finite Fields and Strong $c$-colorings}

\newcommand{\pairs}{{\rm pairs}}

\begin{definition}
Let $X$ be a finite set and $q\in\natt$, $q\ge 3$.
Let $P\subseteq \binom{X}{q}$.
$$
\pairs(P)=
\{\{a_1,a_2\}\in \binom{X}{2} \st (\exists a_3,\ldots,a_q)[\{a_1,\ldots,a_q\}\in P]\}.
$$
\end{definition}

\begin{example}
Let $X=\{1,2,3,4,5,6,7,8,9\}$.
Let $q=3$.
\begin{enumerate}
\item
Let $P= \{ \{1,2,6\}, \{1,8,9\}, \{2,4,6\} \}.$
Then
$$\pairs(P)= \{ \{1,2\}, \{1,6\}, \{2,6\}, \{1,8\}, \{1,9\},
\{8,9\}, \{2,4\}, \{4,6\} \}$$
\item
Let $P= \{ \{1,2,3\}, \{4,5,6\}, \{7,8,9\} \}$.
Then
$$\pairs(P)=\{ \{1,2\}, \{1,3\}, \{2,3\}, \{4,5\}, \{4,6\}, \{5,6\},
\{7,8\}, \{7,9\}, \{8,9\} \}.$$
\end{enumerate}
\end{example}

\begin{theorem}\label{th:partitions}
Let $c,m,r\in \natt$.
Assume there exist $P_1,\ldots,P_m \subseteq\binom{[cr]}{r}$
such that the following hold.
\begin{itemize}
\item
For all $1\le j\le m$, $P_j$ is a partition of $[cr]$ into
$c$ parts of size $r$.
\item
For all $1\le j_1<j_2\le m$, $\pairs(P_{j_1})\cap\pairs(P_{j_2})=\es$.
\end{itemize}
Then
\begin{enumerate}
\item
$G_{cr,m}$ is strongly $c$-colorable.
\item
$G_{cr,cm}$ is $c$-colorable.
\end{enumerate}
\end{theorem}

\begin{proof}


\noindent
1)

We define a strong $c$-coloring $\COL$ of $G_{cr,m}$ using $P_1,\ldots,P_m$.

Let  $1\le j\le m$.
Let $$P_j = \{ L_j^1,\ldots, L_j^c\}$$
where each $L_j^i $ is a subset of $r$ elements from $[cr]$.

Let $1\le i\le cr$ and $1\le j\le m$.
Since $P_j$ is a partition of $[cr]$ there exists a unique
$u$ such that $i\in L_j^u$.
Define
$$\COL(i,j)=u.$$

We show that this is a strong $c$-coloring.
Assume, by way of contradiction, that there exists
$1\le i_1<i_2\le 2k$ and $1\le j_1<j_2\le 2k-1$ such that
$\COL(i_1,j_1)=\COL(i_1,j_2)=u$ and
$\COL(i_2,j_1)=\COL(i_2,j_2)=v$.
By definition of the coloring we have
$$i_1\in L_{j_1}^u, i_1\in L_{j_2}^u,i_2\in L_{j_1}^v, i_2\in L_{j_2}^v$$
Then 
$$\{i_1,i_2\} \in \pairs(P_{j_1})\cap \pairs(P_{j_2}),$$
contradicting the second premise on the $P$'s.

\noindent
2) This follows from Part~(1) and Theorem~\ref{th:strong} with $c=c$
and $c'=1$.
\end{proof}

The Round Robin partition of Lemma~\ref{le:rr} is an example of a
partition satisfying the premises of Theorem~\ref{th:partitions}, where $c=n$,
$r=2$, and $m = 2n-1 = 2c-1$.  The next theorem yields partitions with
bigger values of $r$.

\begin{theorem}\label{th:primepower}
Let $p$ be a prime and $s,d\in\natt$.
\begin{enumerate}
\item
$G_{p^{ds},\frac{p^{ds}-1}{p-1}}$ is strongly $p^{ds-s}$-colorable.
\item
$G_{p^{ds},\frac{p^{ds}-1}{p-1}p^{ds-s}}$ is $p^{ds-s}$-colorable.
\end{enumerate}
\end{theorem}

\begin{proof}
Let $c=p^{ds-s}$, $r=p^s$, and $m=\frac{p^{ds}-1}{p^s-1}$.
We show that there exists $P_1,\ldots,P_m$
satisfying the premise of Theorem~\ref{th:partitions}.
The result follows immediately.

Let $F$ be the finite field on $p^s$ elements.
We identify $[cr]$ with the set $F^d$.

\begin{definition}~
\begin{enumerate}
\item
Let $\vec x\in F^d$, $\vec y \in F^d-\{0^d\}$.
Then
$$L_{\vec x,\vec y}=\{ \vec x + f\vec y \mid f\in F \}.$$
Sets of this form are called \emph{lines}.
Note that for all $\vec x, \vec y, a\in F$ with $a\ne 0$,
$$L_{\vec x,\vec y} = L_{\vec x,a\vec y}.$$
\item
Two lines $L_{\vec x,\vec y}$, $L_{\vec z,\vec w}$ \emph{have the same slope} if
$\vec y$ is a multiple of $\vec w$.
\end{enumerate}
\end{definition}

The following are easy to prove and well-known.
\begin{itemize}
\item
If $L$ and $L'$ are two distinct lines that have the same slope, then
$L\cap L' = \es$.
\item
If $L$ and $L'$ are two distinct lines with different slopes, then
$|L\cap L'| \le 1$.
\item
If $L$ is a line then there are exactly $r=p^s$ points on $L$.
\item
If $L$ is a line then there are exactly $c=p^{ds-s}$ lines that have the same slope as $L$ (this includes $L$ itself).
\item
There are exactly $\frac{p^{ds}-1}{p^s-1}$ different slopes.
\end{itemize}

We define $P_1,\ldots,P_m$ as follows.
\begin{enumerate}
\item
Pick a line $L$. 
Let $P_1$ be the set of lines that
have the same slope as $L$.
\item
Assume that $P_1,\ldots,P_{j-1}$ have been defined and that $j\le m$.
Let $L$ be a line that is not in $P_1\cup \cdots \cup P_{j-1}$.
Let $P_j$ be the set of all lines that
have the same slope as $L$.
\end{enumerate}

We need to show that $P_1,\ldots,P_m$ satisfies the premises
of Theorem~\ref{th:partitions}

\smallskip

\noindent
a) For all $1\le j\le m$, $P_j$ is a partition of $[cr]$ into $c$
parts of size $r$.
Let $L\in P_j$. Note that $P_j$ is the set of all lines with
the same slope as $L$. Clearly this partitions $F^{d}$ which is $[cr]$.

\smallskip

\noindent
b) For all $1\le j_1<j_2\le m$, $\pairs(P_{j_1})\cap\pairs(P_{j_2})=\es$.
Let $L_1$ be any line in $P_{j_1}$ and 
$L_2$ be any line in $P_{j_2}$. Since $|L_1\cap L_2| \le 1 < 2$ 
we have the result.

\smallskip

Note that each $P_j$ has $c=p^{ds-s}$ sets (lines) in it,
each set (line) has $r=p^s$ numbers (points),
and there are $m=\frac{p^{ds}-1}{p^s-1}$ many $P$'s.
Hence the premises of Theorem~\ref{th:partitions}
are satisfied.
\end{proof}

It is convenient to state the $s=1$, $d=2$ case of Theorem~\ref{th:primepower}.

\begin{corollary}\label{co:prime}
Let $p$ be a prime.
\begin{enumerate}
\item
There is a strong $p$-coloring of $G_{p^2,p+1}$.
\item
There is a $p$-coloring of $G_{p^2,p^2+p}$.
\end{enumerate}
\end{corollary}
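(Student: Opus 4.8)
The plan is to obtain both parts by specializing Theorem~\ref{th:primepower}, together with the construction carried out inside its proof, to the case $s=1$, $d=2$. First I would record the resulting parameters: with $s=1$ and $d=2$ we get $c = p^{ds-s} = p^{2-1} = p$, $r = p^{s} = p$, and $m = \frac{p^{ds}-1}{p^{s}-1} = \frac{p^{2}-1}{p-1} = p+1$. Since a prime is in particular a prime power, Theorem~\ref{th:primepower} applies with no extra hypotheses, and its conclusion ``$G_{cr,cm}$ is $c$-colorable'' reads, after substitution, ``$G_{p^{2},\,p^{2}+p}$ is $p$-colorable'', because $cr = p\cdot p = p^{2}$ and $cm = p(p+1) = p^{2}+p$. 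That is part (2).

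For part (1) I would look one layer inside that proof rather than only at its final statement. The proof of Theorem~\ref{th:primepower} does not merely assert colorability: it builds explicit families $P_{1},\dots,P_{m}\subseteq\binom{[cr]}{r}$ --- in our case the $m=p+1$ pencils of parallel lines in the affine plane of order $p$, i.e.\ $F^{2}$ for $F$ the field of $p$ elements --- and verifies that they satisfy the two hypotheses of Lemma~\ref{le:partitions}. Lemma~\ref{le:partitions}(1) then yields that $G_{cr,m}$ is strongly $c$-colorable, i.e.\ $G_{p^{2},\,p+1}$ is strongly $p$-colorable, which is exactly part (1). One could equally deduce part (2) from part (1) by invoking Lemma~\ref{le:strong} with $c'=1$ (or Lemma~\ref{le:partitions}(2) directly on the same families).

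I do not expect any genuine obstacle: the corollary is a bookkeeping instance of the theorem. The only points that merit an explicit sentence are the two arithmetic identities $p^{2-1}=p$ and $(p^{2}-1)/(p-1)=p+1$, and perhaps a remark that in the $d=2$ case the line-geometry facts quoted in the proof of Theorem~\ref{th:primepower} reduce to the completely standard properties of the affine plane of order $p$: there are $p+1$ distinct slopes, each parallel class consists of $p$ pairwise disjoint lines of $p$ points each (so each class partitions $F^{2}=[p^{2}]$), and two lines of different slopes meet in at most one point (so distinct parallel classes share no pair). Hence $\maxrf{\,}{\,}$-type arguments are not needed and the verification is immediate.
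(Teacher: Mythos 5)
Your proposal is correct and matches the paper exactly: the paper presents this corollary as simply the $s=1$, $d=2$ instance of Theorem~\ref{th:primepower}, with part (1) following because the partitions constructed in that theorem's proof satisfy the hypotheses of Lemma~\ref{le:partitions}, whose first conclusion gives the strong $p$-coloring of $G_{p^2,p+1}$. Your observation that part (1) requires looking inside the theorem's proof rather than only at its stated conclusion is accurate and is implicitly how the paper intends the corollary to be read.
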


\subsection{Using Finite Fields for the Square and Almost Square Case}

Can Theorem~\ref{th:primepower} be used to get that, if $c$ is a prime power,
$G_{c^2,c^2}$ is $c$-colorable. Not quite. If $d=2$ one obtains that
a grid of dimensions $p^{2s} \times \frac{p^s-1}{p-1}p^s$ is $p^d$-colorable.
Letting $c=p^s$ one gets that if $c$ is a prime power then  $c^2 \times c^{2-(1/s)+o(1)}$ is $c$-colorable.

Ken Berg and Quimey Vivas have both shown (independently) that
if $c$ is a prime power then $G_{c^2,c^2}$ is $c$-colorable.
(They both emailed us their proofs.)
Ken Berg extended this to show that if $c$ is a prime power
then $G_{c^2,c^2+c}$ is $c$-colorable. We present both proofs.
This result is orthogonal to Theorem~\ref{th:primepower} in that
there are results you can get from either that you cannot get from the other.

\begin{theorem}\label{th:primepowersq}
If $c$ is a prime power then $G_{c^2,c^2}$ is $c$-colorable.
\end{theorem}

\begin{proof}

Let $F$ be a field of $c$ elements.
We view the elements of $G_{c^2,c^2}$ as indexed by $(F\times F)\times(F\times F)$.
The colorings is

$$COL((x_1,x_2),(y_1,y_2)) = x_1y_1 + x_2 + y_2.$$

Note that all of this arithmetic takes place in the field $F$.

Assume, by way of contradiction, that there is a monochromatic rectangle.
Then there exists $w_1,w_2,x_1,x_2,y_1,y_2,z_1,z_2\in F$ such that
$((w_1,w_2),(x_1,x_2))$,
$((w_1,w_2),(y_1,y_2))$,
$((z_1,z_2),(x_1,x_2))$,
and
$((z_1,z_2),(y_1,y_2))$ are all distinct and

$$COL((w_1,w_2),(x_1,x_2))=COL((w_1,w_2),(y_1,y_2))=COL((z_1,z_2),(x_1,x_2))=COL((z_1,z_2),(y_1,y_2)).$$

Since $COL((w_1,w_2),(x_1,x_2))=COL((w_1,w_2),(y_1,y_2))$

\[
\begin{array}{rl}
w_1x_1 + w_2 + x_2 & = w_1y_1 + w_2 + y_2 \cr
w_1(x_1-y_1)  & = y_2 - x_2 \cr
\end{array}
\]

Since $COL((z_1,z_2),(x_1,x_2))=COL((z_1,z_2),(y_1,y_2))$

\[
\begin{array}{rl}
z_1x_1 + z_2 + x_2 & = z_1y_1 + z_2 + y_2 \cr
z_1(x_1-y_1)  & = y_2 - x_2 \cr
\end{array}
\]

Combining these two we get

\[
\begin{array}{rl}
w_1(x_1-y_1)  & = z_1(x_1-y_1)\cr
(w_1-z_1)(x_1-y_1)& = 0 \cr
\end{array}
\]

Since the arithmetic takes place in a field we obtain that either
$w_1=z_1$ or $x_1=y_1$.

\noindent
{\bf Case 1: $w_1=z_1$}

Since $COL((w_1,w_2),(x_1,x_2))=COL((z_1,z_2),(x_1,x_2))$. 

\[
\begin{array}{rl}
w_1x_1 + w_2 + x_2 & = z_1x_1 + z_2 + x_2 \cr
z_1x_1 + w_2 + x_2 & = z_1x_1 + z_2 + x_2 \hbox{  Since $w_1=z_1$.}\cr
w_2 & = z_2\cr
\end{array}
\]

Since $w_1=z_1$ and $w_2=z_2$ the four points are not distinct.
This is a contradiction.

\noindent
{\bf Case 2: $x_1=y_1$.} Similar to Case 1.

\end{proof}

\begin{theorem}\label{th:primepowersqplus}
If $c$ is a prime power then $G_{c^2,c^2+c}$ is $c$-colorable.
\end{theorem}

\begin{proof}

Let $F$ be a field of $c$ elements.
Let $\kstar$ be a symbol to which we assign no meaning.
We view the elements of $G_{c^2,c^2+c}$ as indexed by $(F\times F)\times(F\union \{\kstar\}\times F)$.

We describe the coloring.
Assume $x_1,x_2,y_1,y_2\in F$. 

\[
\begin{array}{rl}
COL((x_1,x_2),(y_1,y_2)) & = x_1y_1 + x_2 + y_2\cr
COL((x_1,x_2),(\kstar,y_2)) & = x_1 + y_2\cr
\end{array}
\]

Note that all of this arithmetic takes place in the field $F$.

Assume, by way of contradiction, that there is a monochromatic rectangle.
Then there exists 
$w_1,w_2,x_1,x_2,y_1,y_2,z_1,z_2$ such that
\begin{itemize}
\item
$w_1,w_2,x_2,y_2,z_1,z_2\in F$
\item
$x_1,y_1\in F\cup \{\kstar\}$.
\item
$((w_1,w_2),(x_1,x_2))$,
$((w_1,w_2),(y_1,y_2))$,
$((z_1,z_2),(x_1,x_2))$,
$((z_1,z_2),(y_1,y_2))$ are all distinct.
\item 
$((w_1,w_2),(x_1,x_2))$,
$((w_1,w_2),(y_1,y_2))$,
$((z_1,z_2),(x_1,x_2))$,
$((z_1,z_2),(y_1,y_2))$ are all the same color.
\end{itemize}

By the proof of Theorem~\ref{th:primepowersq} at least one of $x_1,y_1$ is $\kstar$.
We can assume $x_1=\kstar$.
There are two cases.

\noindent
{\bf Case 1:} $y_1=\kstar$.
Since 

$$COL((w_1,w_2),(\kstar,x_2))=COL((w_1,w_2),(\kstar,y_2))$$

we have

$$w_1 + x_2 = w_1 + y_2$$

so $x_2=y_2$. Hence $(x_1,x_2)=(y_1,y_2)$ so the points are not distinct.

\smallskip

\noindent
{\bf Case 2:} $y_1\ne \kstar$.
Since 

$$COL((w_1,w_2),(\kstar,x_2))=COL((z_1,z_2),(\kstar,x_2))$$

we have

$$w_1+x_2 = z_1 + x_2$$

so $w_1=z_1$.
Since 

$$COL((w_1,w_2),(y_1,y_2))=COL((z_1,z_2),(y_1,y_2))$$

we have

$$w_1y_1 + w_2 + y_2 = z_1y_1 + z_2 + y_2.$$

Since $w_1=z_1$ we have $w_2 = z_2$.
Hence we have $(w_1,w_2)=(z_1,z_2)$ so the points are not distinct.

\end{proof}

\section{Bounds on the Sizes of Obstruction Sets}\label{se:bounds}

\subsection{An Upper Bound}

Using the uncolorability bounds, we can obtain an upper-bound on the
size of a $c$-colorable grid.

\begin{theorem}\label{th:csq}
\label{obsupperbound}
For all $c > 0$, $G_{c^2+c, c^2+c}$ is not $c$-colorable.
\end{theorem}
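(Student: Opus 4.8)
The plan is to derive a contradiction from Theorem~\ref{th:connection} and part~(1) of Lemma~\ref{le:density2} (equivalently, to invoke Corollary~\ref{uncolor3} directly) with $n=m=c^2+c$. The first step is pure arithmetic: writing $c^2+c=c(c+1)$, we get
$$\Ceil{\frac{nm}{c}}=\frac{(c^2+c)^2}{c}=c(c+1)^2,$$
with no rounding needed since $c\mid(c^2+c)^2$. The second step is to put this number in the form $qn+r$ of Lemma~\ref{le:density2}: since $c(c+1)^2=(c+1)\cdot c(c+1)=(c+1)n$, we have $q=c+1$ and $r=0$, and $q\ge 2$ because $c\ge 1$.

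Now suppose toward a contradiction that $G_{c^2+c,c^2+c}$ is $c$-colorable. By Theorem~\ref{th:connection} it contains a rectangle-free set $A$ with $|A|=c(c+1)^2=qn+r$. Since $q\ge 2$, part~(1) of Lemma~\ref{le:density2} applies and gives, using $r=0$,
$$n\le\bigfloor{\frac{m(m-1)-2rq}{q(q-1)}}=\bigfloor{\frac{(c^2+c)(c^2+c-1)}{(c+1)c}}=c^2+c-1=n-1,$$
which is absurd. (Equivalently, one checks $\frac{m(m-1)-2qr}{q(q-1)}=c^2+c-1<n$ and applies Corollary~\ref{uncolor3} verbatim.) Hence $G_{c^2+c,c^2+c}$ is not $c$-colorable.

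The one thing that makes this go through — and the only step worth flagging — is the identity $q(q-1)=(c+1)c=n=m$, which is exactly what forces the density bound on the number of rows down to $n-1$, one short of what a rectangle-free set of size $\Ceil{nm/c}$ would require. So there is no real obstacle: the argument is bookkeeping layered on top of Lemma~\ref{le:density2}, and the computation covers all $c\ge 1$ uniformly (the case $c=1$ being just the remark that $G_{2,2}$ admits no $1$-coloring).
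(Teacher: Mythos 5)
Your proposal is correct and is essentially the paper's own proof: the paper likewise applies Corollary~\ref{uncolor3} with $n=m=c^2+c$, $q=c+1$, $r=0$, and computes $\frac{m(m-1)-2qr}{q(q-1)}=c^2+c-1<n$. The only difference is expository (you unwind the corollary back to Theorem~\ref{th:connection} and Lemma~\ref{le:density2} and note the identity $q(q-1)=n$ explicitly), which changes nothing of substance.
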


\begin{proof}
We apply Corollary \ref{uncolor3} with $m=c^2+c$ and $n=c^2+c$.  Note that

\begin{align*}
\bigceiling{\frac{nm}{c}} & = \bigceiling{\frac{(c^2+c)(c^2+c)}{c}} \\
&=  (c+1)(c^2+c).
\end{align*}

\noindent Letting $q = c+1$ and $r = 0$, we have

\begin{align*}
\frac{m(m-1)-2qr}{q(q-1)} & = \frac{(c^2+c)(c^2+c-1)}{(c+1)c}\\
& = c^2+c-1 \\
& < c^2+c\\
& = n.
\end{align*}\end{proof}

Using this, we can obtain an upper-bound on the size of an obstruction set.

\begin{theorem}\label{th:obsc}
If $c > 0$, then $|\OBS_c|\le 2c^2$.
\end{theorem}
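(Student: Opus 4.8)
The plan is to view $\OBS_c$ as an antichain in $\nat\times\nat$ under the product order (where $(n',m')\le(n,m)$ iff $\Gnm$ contains $G_{n',m'}$) and to confine it to a region so small that the bound is forced. First I would record two easy facts. \emph{(a) Symmetry:} $\Gnm\in\OBS_c$ iff $G_{m,n}\in\OBS_c$, since transposing a coloring shows $\Gnm$ is $c$-colorable iff $G_{m,n}$ is, and proper containment is preserved under transpose, so $c$-minimality transfers. \emph{(b) Small grids are colorable:} if $n\le c$ then $\Gnm$ is $c$-colorable for \emph{every} $m$ --- color $(i,j)$ by its row index $i\in[n]\subseteq[c]$; any rectangle uses two distinct rows $a\ne a'$, so its four corners get colors $a,a',a',a$, which are not all equal. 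Hence no element of $\OBS_c$ has either coordinate $\le c$.

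Next I would bound the other side using Theorem~\ref{th:csq}. Suppose $\Gnm\in\OBS_c$ with $n\le m$ and $n\ge c^2+c+1$. Then $m\ge n>c^2+c$, so $G_{c^2+c,c^2+c}$ is properly contained in $\Gnm$; by $c$-minimality of $\Gnm$, the grid $G_{c^2+c,c^2+c}$ would then be $c$-colorable, contradicting Theorem~\ref{th:csq}. Therefore every $\Gnm\in\OBS_c$ with $n\le m$ satisfies $c+1\le n\le c^2+c$, i.e. $n$ ranges over a set of exactly $c^2$ values.

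Then I would invoke the antichain property coordinate-wise: for a fixed $n$, any two grids $G_{n,m_1},G_{n,m_2}$ with $m_1<m_2$ are comparable ($G_{n,m_1}$ is properly contained in $G_{n,m_2}$), so they cannot both be $c$-minimal. Hence for each of the $c^2$ admissible values of $n$ there is at most one $m\ge n$ with $\Gnm\in\OBS_c$, giving $|\{\Gnm\in\OBS_c : n\le m\}|\le c^2$.

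Finally I would split $\OBS_c$ by whether $n<m$, $n=m$, or $n>m$: letting $A$ and $B$ be the numbers of elements with $n<m$ and with $n=m$ respectively, symmetry (a) gives $|\OBS_c|=2A+B\le 2(A+B)=2\,|\{\Gnm\in\OBS_c : n\le m\}|\le 2c^2$. The only mildly delicate point is getting the lower cutoff exactly right: one needs $n\ge c+1$ rather than merely $n\ge c$ so that the admissible range of $n$ has exactly $c^2$ values, which is precisely why fact (b) --- that $G_{c,m}$ is $c$-colorable for all $m$ --- is essential and not cosmetic; everything else is bookkeeping with containment and Theorem~\ref{th:csq}.
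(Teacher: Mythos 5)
Your proof is correct and follows essentially the same route as the paper: both arguments confine the smaller coordinate of a $c$-minimal grid to the $c^2$ values in $(c,\,c^2+c]$ (using trivial colorability for coordinates $\le c$ and Theorem~\ref{th:csq} for the upper cutoff) and then use the fact that each fixed row-count admits at most one minimal grid. The only difference is bookkeeping --- you count the half with $n\le m$ and double by symmetry, while the paper counts at most two minimal grids per admissible value of $r$ directly.
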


\begin{proof}
For each $r$, there can be at most one element of $\OBS_c$ of the form
$G_{r,n}$.  Likewise, there can be at most one element of $\OBS_c$ of the
form $G_{n,r}$.  If $r \leq c$ then for all $n$, $G_{r,n}$ and
$G_{n,r}$ are trivially $c$-colorable and are, therefore, not an element of $\OBS_c$.
Theorem \ref{obsupperbound} shows that for all $n,m >
c^2+c$, $\Gnm$ is not an element of $\OBS_c$.  It follows that there can be at
most two elements of $\OBS_c$ for each integer $r$ where $c < r \leq
c^2+c$.  Therefore $|\OBS_c|\le 2c^2$.
\end{proof}

\begin{note}
We will later see that $|OBS_2|=3$, $|OBS_3|=8$, and $|OBS_4|=16$.
Based on this (scant) evidence the bound of $2c^2$ looks like its too large.
\end{note}

\subsection{A Lower Bound}

To get a lower bound on $|\OBS_c|$, we will combine
Corollary~\ref{uncolor2a} and Theorem~\ref{th:cplusgen}(2) with the
following lemma:

\begin{lemma}\label{le:finding-obs}
Suppose that $G_{m_1,n}$ is $c$-colorable and $G_{m_2,n}$ is not
$c$-colorable.  Then there exists $n,m$ such that 
$m_1 < x \le m_2$, $y \le n$, and 
a grid $G_{x,y}\in\OBS_c$.
\end{lemma}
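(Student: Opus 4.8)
The plan is to exploit the monotonicity of colorability under grid containment (stated right after the definition of containment) to "walk down" from a colorable grid to a non-colorable one and locate an obstruction. Fix the number of rows at $n$ throughout; by hypothesis $G_{m_1,n}$ is $c$-colorable and $G_{m_2,n}$ is not, so in particular $m_1 < m_2$. The idea is to consider the finite chain of grids $G_{m_1,n}, G_{m_1+1,n}, \ldots, G_{m_2,n}$ and find the first index at which colorability fails.

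First I would let $x$ be the least integer with $m_1 < x \le m_2$ such that $G_{x,n}$ is \emph{not} $c$-colorable; this is well-defined since $G_{m_2,n}$ witnesses that the set of such indices is nonempty, and $m_1$ is excluded so $x > m_1$. By minimality of $x$, the grid $G_{x-1,n}$ \emph{is} $c$-colorable (this uses $x-1 \ge m_1$, and if $x-1 = m_1$ it is the hypothesis, otherwise it is the minimality). Now $G_{x,n}$ is not $c$-colorable, so by Theorem~\ref{th:obs} it contains some element $G_{x',y'} \in \OBS_c$, meaning $x' \le x$ and $y' \le n$. The remaining point is to argue $x' > m_1$: if we had $x' \le m_1$, then since also $y' \le n$, the grid $G_{x',y'}$ would be contained in $G_{m_1,n}$, forcing $G_{m_1,n}$ to contain an element of $\OBS_c$ and hence (again by Theorem~\ref{th:obs}) to be non-$c$-colorable --- contradicting the hypothesis. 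Therefore $m_1 < x' \le x \le m_2$ and $y' \le n$, so $G_{x',y'}$ is the desired element of $\OBS_c$ (renaming $x',y'$ to $x,y$ for the statement).

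**The main obstacle**, such as it is, is purely bookkeeping: making sure the inequalities are non-strict in the right places so that the edge cases $x - 1 = m_1$ and $x' = x$ are handled, and correctly invoking Theorem~\ref{th:obs} in both directions (a grid is $c$-colorable iff it contains no element of $\OBS_c$). There is no real combinatorial content --- everything rests on the fact that $\OBS_c$ is exactly the set of containment-minimal non-$c$-colorable grids together with the downward-closure of colorability. I would present the argument in three short steps: (1) define $x$ as the least failure point and note $G_{x-1,n}$ is colorable; (2) extract $G_{x',y'} \in \OBS_c$ contained in $G_{x,n}$ via Theorem~\ref{th:obs}; (3) rule out $x' \le m_1$ by a containment-in-$G_{m_1,n}$ contradiction.
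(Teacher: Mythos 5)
Your proof is correct. It differs mildly from the paper's: the paper never invokes Theorem~\ref{th:obs}, but instead performs a second minimization --- after taking $x$ least with $G_{x,n}$ not $c$-colorable, it takes $y$ least with $G_{x,y}$ not $c$-colorable, and checks directly that this doubly-minimal grid lies in $\OBS_c$ (every proper sub-grid sits inside $G_{x-1,y}\subseteq G_{x-1,n}$ or inside $G_{x,y-1}$, both colorable by the two minimality choices). You instead extract an obstruction $G_{x',y'}$ contained in the non-colorable grid via Theorem~\ref{th:obs} and then rule out $x'\le m_1$ by a containment contradiction with the colorability of $G_{m_1,n}$. Both are valid; the paper's version is self-contained (it does not lean on the reader-supplied Theorem~\ref{th:obs}) and pins down the obstruction explicitly, while yours is slightly shorter once Theorem~\ref{th:obs} is granted. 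Note also that in your argument the minimization over $x$ and the observation that $G_{x-1,n}$ is colorable are never actually used --- you could apply Theorem~\ref{th:obs} directly to $G_{m_2,n}$ and run the same containment contradiction --- so those steps could be deleted.
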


\begin{proof}
Given $n$, let $x$ be the least integer such that $G_{x,n}$ is not $c$-colorable.
Clearly, $m_1 < x \le m_2$.  Now given $x$ as above, let $y$ be least
such that $G_{x,y}$ is not $c$-colorable.  Clearly, $y \le n$ and
$G_{x,y}\in\OBS_c$.
\end{proof}

\begin{theorem}
$|\OBS_c| \ge 2\sqrt{c}(1-o(1))$.
\end{theorem}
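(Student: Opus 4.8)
The plan is to exhibit roughly $\sqrt{c}$ distinct elements of $\OBS_c$, one for each value of a parameter $c'$ ranging over about $\sqrt{c}$ values, by sandwiching an obstruction between a colorable grid and an uncolorable grid sharing a common dimension. Fix $c'$ with $1 \le c' \le c$. On the colorable side, Theorem~\ref{th:cplusgen}(2) gives a $c$-coloring of $G_{c+c',m'}$ with $m' = \floor{c/c'}\binom{c+c'}{2}$; equivalently, writing $n = c+c'$, the grid $G_{n,m'}$ is $c$-colorable. On the uncolorable side, Corollary~\ref{uncolor2a} says $G_{c+c',m}$ is not $c$-colorable for any $m > \frac{c}{c'}\binom{c+c'}{2}$. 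Since $\floor{c/c'}\binom{c+c'}{2} \le \frac{c}{c'}\binom{c+c'}{2}$, there is a narrow window, and in any case applying Lemma~\ref{le:finding-obs} with the roles of the two dimensions appropriately assigned (with $n$ the common dimension $= c+c'$ and the other dimension ranging from $m'$ up to something just past $\frac{c}{c'}\binom{c+c'}{2}$) yields a grid $G_{x,y} \in \OBS_c$ with $x$ strictly between $m'$ and that upper bound, and with one side equal to a value in $(m', \frac{c}{c'}\binom{c+c'}{2}]$ — in particular, with its larger dimension of order $\frac{c}{c'}\binom{c+c'}{2} \approx \frac{c(c+c')^2}{2c'}$.

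Next I would argue these obstruction sets are distinct for distinct $c'$. The key is that the size (say, the larger dimension $y$) of the obstruction produced for parameter $c'$ lies in a controlled range. For $c' = \Theta(\sqrt c)$ the quantity $\frac{c}{c'}\binom{c+c'}{2}$ is $\Theta(c^{5/2}/c') $; more precisely, as $c'$ increases from $1$ toward $c$, the value $\frac{c}{c'}\binom{c+c'}{2} \sim \frac{c^3}{2c'}$ is strictly decreasing over the relevant range, while the lower bound $m' \sim \frac{c^3}{2c'}$ as well — so I would want to be a little careful. The cleaner approach: for each $c'$, the obstruction $G_{x,y}$ satisfies $m' < x$, i.e. $x > \floor{c/c'}\binom{c+c'}{2}$, and also (from Theorem~\ref{obsupperbound}) $x \le c^2 + c$. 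To get distinctness, I would instead track which grid row-count $r = c+c'$ is "responsible": by Lemma~\ref{le:finding-obs}, the obstruction has one dimension $x$ with $c+c' < x \le$ (uncolorability threshold), but actually the guaranteed dimension from the construction is the long one. Let me restructure: choose a strictly increasing sequence of values $c'_1 < c'_2 < \cdots$ spaced so that the intervals $\big(\floor{c/c'_i}\binom{c+c'_i}{2},\ \frac{c}{c'_i}\binom{c+c'_i}{2}\big]$ — or better, intervals separating consecutive obstruction sizes — are pairwise disjoint; counting how many such $c'$ fit gives $\Theta(\sqrt c)$.

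Concretely, I expect the argument to run as follows. For a fixed $c'$, Corollary~\ref{uncolor2a} and Theorem~\ref{th:cplusgen}(2) show that as the free dimension $m$ grows past $\floor{c/c'}\binom{c+c'}{2}$, colorability of $G_{c+c',m}$ is lost by the time $m$ exceeds $\frac{c}{c'}\binom{c+c'}{2}$. So there is a least $m^*(c')$ with $G_{c+c',m^*(c')}$ not $c$-colorable, and $\floor{c/c'}\binom{c+c'}{2} < m^*(c') \le 1 + \frac{c}{c'}\binom{c+c'}{2}$. By Lemma~\ref{le:finding-obs} (applied with the common dimension playing the role of "$n$", and $m^*(c')-1$, $m^*(c')$ playing the roles of "$m_1$", "$m_2$" — swapping which coordinate is which), we obtain $G_{x,y}\in\OBS_c$ with $x = m^*(c')$ and $y \le c+c'$; since any $\OBS_c$ element has $y > c'$ too (grids with a side $\le c$ are colorable — wait, $y\le c$ forces colorable, so $y \ge c+1$), we get $y \in \{c'+1,\dots,c+c'\}$ roughly. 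The upshot is an obstruction whose short side is $\le c+c'$ and whose long side $m^*(c')$ is $\Theta\!\big(\tfrac{c(c+c')^2}{c'}\big)$, which for $c' = t\sqrt c$, $t$ a positive integer $\le \sqrt c$, is $\Theta(c^{5/2}/t)$. These long sides, as $t$ ranges over $1,2,\dots,\lfloor\sqrt c\rfloor$, take $\Theta(\sqrt c)$ values that are separated enough (the ratio between consecutive ones, $\frac{1/t}{1/(t+1)} = 1+1/t$, times a quantity of size $c^{5/2}$, exceeds the "slack" $1+\frac{c}{c'}\binom{c+c'}{2} - \floor{c/c'}\binom{c+c'}{2}$ which is $O(c^2)$) to force the obstruction sets to be pairwise distinct, yielding $|\OBS_c| \ge \lfloor\sqrt c\rfloor = 2\sqrt c\,(1-o(1))$ — where the factor $2$ presumably comes from also counting the transposed obstructions $G_{y,x}$, which are distinct from the $G_{x,y}$ since $x \ne y$ for these (as $x$ is huge and $y$ small).

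The main obstacle I anticipate is the bookkeeping that makes "the obstruction sizes are separated enough to be distinct" rigorous: one must verify that the window of uncertainty in $m^*(c')$ — the gap between $\floor{c/c'}\binom{c+c'}{2}$ and $1 + \frac{c}{c'}\binom{c+c'}{2}$, of order $c^2$ — is genuinely smaller than the spacing $\Theta(c^{5/2}/t^2)$ between the targets for consecutive $t$, which holds only once $t$ is not too large, i.e. $t = O(c^{1/4})$ or so — this would give only $\Theta(c^{1/4})$ distinct obstructions, not $\Theta(\sqrt c)$. So the quantitative claim $2\sqrt c(1-o(1))$ probably requires a sharper separation argument: rather than spacing $c'$ as multiples of $\sqrt c$, take $c'$ to run over all integers in some range like $[\epsilon_c \cdot c^{?}, \cdot]$ and argue directly that the map $c' \mapsto (\text{the } \OBS_c \text{ element it produces})$ is injective on an interval of $\Theta(\sqrt c)$ integer values — e.g. by showing $m^*(c')$ is strictly monotdecreasing in $c'$ on that range, using that both the lower bound $\floor{c/c'}\binom{c+c'}{2}$ and the upper bound on $m^*$ are strictly monotone with non-overlapping consecutive ranges. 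Pinning down exactly which range of $c'$ of length $\sim 2\sqrt c$ works, and confirming the non-overlap there, is the delicate computational heart of the proof.
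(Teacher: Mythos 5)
You have assembled the right ingredients (Corollary~\ref{uncolor2a}, Theorem~\ref{th:cplusgen}(2), Lemma~\ref{le:finding-obs}, and doubling by transposition), but the counting step has a genuine gap that your write-up does not close. Because you apply the sandwich lemma with the \emph{short} side $c+c'$ as the common dimension, the obstruction you produce is identified only by its long side $m^*(c')$, which you can locate only inside a window of width roughly $\binom{c+c'}{2}=\Theta(c^2)$ between $\floor{c/c'}\binom{c+c'}{2}$ and $\frac{c}{c'}\binom{c+c'}{2}$. Your separation argument, which samples $c'$ at multiples of $\sqrt c$, then provably distinguishes only $\Theta(c^{1/4})$ obstructions --- as you yourself observe --- and the remaining ``sharper separation argument'' is exactly the part you leave undone. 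As written, the proposal does not establish $|\OBS_c|\ge 2\sqrt c\,(1-o(1))$.

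The paper closes this gap by flipping the orientation of the sandwich so that the obstruction's \emph{short} dimension is pinned exactly, making distinctness automatic. Set $n:=\floor{\frac{c}{c'-1}}\binom{c+c'-1}{2}$; then $G_{c+c'-1,n}$ is $c$-colorable by Theorem~\ref{th:cplusgen}(2), and $G_{c+c',n}$ is not $c$-colorable by Corollary~\ref{uncolor2a}, \emph{provided}
$$\frac{c}{c'}\binom{c+c'}{2} < \floor{\frac{c}{c'-1}}\binom{c+c'-1}{2}.$$
Lemma~\ref{le:finding-obs} then yields $G_{x,y}\in\OBS_c$ with $c+c'-1<x\le c+c'$, i.e.\ $x=c+c'$ exactly, so distinct admissible $c'$ give distinct obstructions with no separation bookkeeping at all. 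The entire proof then reduces to verifying that the displayed inequality holds for every integer $2\le c'\le(1-\eps)\sqrt c$ once $c$ is large (spacing between consecutive thresholds is about $c^3/2c'^2$, which exceeds the $\Theta(c^2)$ rounding slack precisely when $c'\lesssim\sqrt c$), plus the $c'=1$ obstruction and the transposes. Your alternative suggestion --- proving $m^*(c')$ strictly decreasing over \emph{all} integers $c'$ in the range rather than over multiples of $\sqrt c$ --- amounts to the same inequality and would also work, but it is the computation you did not carry out, and it is where the theorem actually lives.
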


\begin{proof}
For any $c \ge 2$ and any $1\le c' \le c$ we can summarize
Corollary~\ref{uncolor2a} and Theorem~\ref{th:cplusgen}(2) as follows:
\[ G_{c+c',n} \mbox{ is } \left\{ \begin{array}{ll}
\mbox{$c$-colorable } & \mbox{if $n \le \floor{\frac{c}{c'}}\binom{c+c'}{2}$,}
\\
\mbox{not $c$-colorable } & \mbox{if $n > \frac{c}{c'}\binom{c+c'}{2}$.}
\end{array} \right. \]
(We won't use the fact here, but note that this is tight if
$c'$ divides $c$.)

Suppose $c'>1$ and
\begin{equation}\label{eq:obs}
\frac{c}{c'}\binom{c+c'}{2} < \floor{\frac{c}{c'-1}}\binom{c+c'-1}{2}.
\end{equation}
Then letting $n := \floor{\frac{c}{c'-1}}\binom{c+c'-1}{2}$, we see that
$G_{c+c'-1,n}$ is $c$-colorable, but $G_{c+c',n}$ is not.  Then by
Lemma~\ref{le:finding-obs}, there is a grid $G_{c+c',y} \in \OBS_c$
for some $y$.  So there are at least as many elements of $\OBS_c$ as
there are values of $c'$ satisfying Inequality~(\ref{eq:obs})---actually
twice as many, because $\Gnm\in \OBS_c$ iff $\Gmn\in OBS_c$.

Fix any real $\eps > 0$.  Clearly, Inequality~(\ref{eq:obs}) holds provided
\[ \frac{c}{c'}\binom{c+c'}{2} \le
\left(\frac{c}{c'-1}-1\right)\binom{c+c'-1}{2}. \]
A rather tedious calculation reveals that if $2 \le c' \le
(1-\eps)\sqrt{c}$, then this latter inequality holds for all large
enough $c$.  Including the grid $G_{c+1,n}\in\OBS_c$ where $n =
c\binom{c+1}{2}+1$, we then get $|\OBS_c| \ge \floor{(1-\eps)\sqrt{c}}$
for all large enough $c$, and since $\eps$ was arbitrary, we therefore
have $|\OBS_c| \ge \sqrt{c}(1-o(1))$.

To double the count, we notice that $c+c' \le
\floor{\frac{c}{c'}}\binom{c+c'}{2}$, hence $G_{c+c',c+c'}$ is
$c$-colorable by Theorem~\ref{th:cplusgen}(2).  This means that
$G_{c+c',y}\in\OBS_c$ for some $y > c+c'$, and so we can count
$G_{y,c+c'}\in\OBS_c$ as well without counting any grids twice.
\end{proof}

\section{Which Grids Can be 2-Colored?}\label{se:col2}

\begin{theorem}\label{th:col2}~
\begin{enumerate}
\item
$G_{7,3}$ and $G_{3,7}$ are not 2-colorable
\item
$G_{5,5}$ is not 2-colorable.
\item
$G_{7,2}$ and $G_{2,7}$ are 2-colorable (this is trivial).
\item
$G_{6,4}$ and $G_{4,6}$ are 2-colorable.
\end{enumerate}
\end{theorem}

\begin{proof}

We only consider grids of the form $\Gnm$ where $n\ge m$.

\noindent
1,2)

In Table~\ref{ta:uncol2} we show that $G_{7,3}$ and $G_{5,5}$ are not 2-colorable.
For each $(n,m)$ we use either Corollary~\ref{uncolor2} or \ref{uncolor3}.
In the table we give, for each $(n,m)$, the value of $\ceil{\frac{nm}{2}}$,
the $q,r$ such that $\ceil{\frac{nm}{2}}=qn+r$ with $0\le r\le n-1$,
which corollary we use (Use),  the premise of the corollary (Prem), and the arithmetic showing the premise is true 
(Arith).

\begin{table}[htbp]
\centering
\begin{tabular}{c c c c c c c c }
$m$ & $n$ & $\lceil \frac{nm}{2} \rceil$ & $q$ & $r$ & Use & Prem & Arith  \\ [0.5ex]
\hline\hline
3 & 7 & 11 & 1 & 4 & Cor~\ref{uncolor2} & $\qone$  & $3 < 4 \le 7$ \\
5 & 5 & 13 & 2 & 3 & Cor~\ref{uncolor3} & $\qnotone$  & $4 < 5$ \\
\hline
\end{tabular}
\caption{$(m,n)$ such that $G_{m,n}$ is not 2-colorable \label{ta:uncol2}}
\end{table}

\smallskip

\noindent
3) $G_{7,2}$ is clearly 2-colorable.

\smallskip

\noindent
4) $G_{6,4}$ is 2-colorable 
by Corollary~\ref{co:prime} with $p=2$.
We present that coloring  in Table~\ref{ta:col46} below.

\begin{table}[htbp]
 \[
 \begin{array}{|c|c|c|c|c|c|}
\hline
  R&    R &  R &  B &  B &  B \cr
 \hline
  R&    B &  B &  B &  R &  R \cr
 \hline
  B&    R &  B &  R &  B &  R  \cr
 \hline
  B&    B &  R &  R &  R &  B  \cr
 \hline
 \end{array}
 \]
\caption{2-Coloring of $G_{4,6}$\label{ta:col46}}
\end{table}

\end{proof}

\begin{theorem}\label{th:obs2}
$\OBS_2 =\{G_{7,3},G_{5,5},G_{3,7}\}$.
\end{theorem}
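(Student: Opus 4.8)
To prove $\OBS_2 = \{G_{7,3}, G_{5,5}, G_{3,7}\}$, I must show (a) each of these three grids is $2$-minimal, i.e., is not $2$-colorable but every grid properly contained in it is $2$-colorable; and (b) there are no other elements of $\OBS_2$, i.e., every grid not containing one of these three is $2$-colorable.

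For direction (a), the non-$2$-colorability of $G_{7,3}$, $G_{3,7}$, and $G_{5,5}$ is exactly Lemma~\ref{le:col2}(1,2). For minimality, I need the grids properly contained in each to be $2$-colorable. A grid properly contained in $G_{7,3}$ is contained in $G_{6,3}$ or in $G_{7,2}$; since $G_{7,2}$ is $2$-colorable (Lemma~\ref{le:col2}(3)) and $G_{6,3} \subseteq G_{6,4}$ which is $2$-colorable (Lemma~\ref{le:col2}(4)), both cases are covered, and symmetrically for $G_{3,7}$. A grid properly contained in $G_{5,5}$ is contained in $G_{4,5}$; since $G_{4,5} \subseteq G_{4,6}$, which is $2$-colorable by Lemma~\ref{le:col2}(4), this too is covered. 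So (a) reduces entirely to the already-proven Lemma~\ref{le:col2} together with the monotonicity remark that a $c$-colorable grid has all contained grids $c$-colorable.

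For direction (b), suppose $G_{n,m}$ contains none of $G_{7,3}$, $G_{5,5}$, $G_{3,7}$; I want to show $G_{n,m}$ is $2$-colorable. Without loss of generality take $n \ge m$. If $m \le 2$ the grid is trivially $2$-colorable, so assume $m \ge 3$. Since $G_{n,m}$ does not contain $G_{3,7}$ (note $m\le n$, so containing $G_{3,7}$ would need $m \ge 7$ hmm—careful with orientation; I will just say it does not contain $G_{7,3}$), we get $n \le 6$; and since it does not contain $G_{5,5}$, together with $n \ge m \ge 3$ we cannot have both $n \ge 5$ and $m \ge 5$. The remaining candidates with $3 \le m \le n \le 6$ and not containing $G_{7,3}$ or $G_{5,5}$ are then $G_{n,3}$ for $3 \le n \le 6$ and $G_{n,4}$ for $4 \le n \le 6$. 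Each of these is contained in $G_{6,4}$ (check: $G_{6,3}, G_{5,3}, G_{4,3}, G_{3,3} \subseteq G_{6,4}$, and $G_{6,4}, G_{5,4}, G_{4,4} \subseteq G_{6,4}$), which is $2$-colorable by Lemma~\ref{le:col2}(4). Hence every such grid is $2$-colorable, completing (b).

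**I expect the main obstacle to be bookkeeping rather than mathematics**: carefully enumerating the finitely many grids in the "gap" between the known colorable grid $G_{6,4}$ and the three obstruction grids, and checking containment relations correctly (especially keeping track of which coordinate is which, since $\OBS_c$ is closed under transposing). The one genuinely substantive input—that $G_{6,4}$ is $2$-colorable and that $G_{7,3}, G_{5,5}$ are not—is entirely supplied by Lemma~\ref{le:col2}, which in turn rests on Corollary~\ref{uncolor1} (Reiman's bound) and Corollary~\ref{co:prime} (the finite-field construction with $p=2$). So the proof of Theorem~\ref{th:obs2} itself should be short: invoke Lemma~\ref{le:col2}, invoke monotonicity, and dispatch the finite case analysis.
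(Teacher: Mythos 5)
Your proposal is correct and follows essentially the same route as the paper: both establish minimality of $G_{7,3}$, $G_{5,5}$, $G_{3,7}$ directly from Lemma~\ref{le:col2}, and then dispatch all remaining grids by a finite case analysis reducing everything to containment in $G_{6,4}$ (or its transpose) or to the trivial $m\le 2$ case. The only cosmetic omission is that for minimality of $G_{5,5}$ you should mention $G_{5,4}$ alongside $G_{4,5}$, but this is immediate by the transposition symmetry you already invoke.
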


\begin{proof}

$G_{7,3}$ is not $2$-colorable by Theorem~\ref{th:col2}.
$G_{6,3}$ is $2$-colorable by Theorem~\ref{th:col2}.
$G_{7,2}$ is $2$-colorable by Theorem~\ref{th:col2}.
Hence $G_{7,3}\in \OBS_2$.
The proof for $G_{3,7}$ is similar.

$G_{5,5}$ is not $2$-colorable by Theorem~\ref{th:col2}.
$G_{5,4}$ and $G_{4,5}$ are $2$-colorable by Theorem~\ref{th:col2}.
Hence $G_{5,5}\in OBS_2$.

Table~\ref{ta:col2} indicates exactly which grids are 2-colorable.
The entry for $(n,m)$ is $C$ if $\Gnm$ is 2-colorable, and
$N$ if $\Gnm$ is not 2-colorable.
From this Table one easily sees that the grids listed in this theorem
are the only elements of $\OBS_2$.

\begin{table}[htbp]
 \[
 \begin{array}{|c|c|c|c|c|c|c|c|}
 \hline
      & 2 &  3 &  4 &  5 &  6 &  7 & 8  \cr
\hline
  2&    C &  C &  C &  C &  C  & C & C \cr
 \hline
  3&    C &  C &  C &  C &  C  & N & N \cr
 \hline
  4&    C &  C &  C &  C &  C  & N & N  \cr
 \hline
  5&    C &  C &  C &  N &  N  & N & N  \cr
 \hline
  6&    C &  C  & C  & N  & N  & N & N  \cr
 \hline
  7&    C &  N  & N  & N  & N  & N & N \cr
 \hline
  8&    C &  N  & N  & N  & N  & N & N \cr
 \hline
 \end{array}
 \]
\caption{2-Colorable Grids ($C$) and non 2-Colorable Grids ($N$)}\label{ta:col2}
\end{table}

\end{proof}

\section{Which Grids Can be 3-Colored?}\label{se:col3}

\begin{theorem}\label{th:col3}~
\begin{enumerate}
\item
$G_{19,4}$ and $G_{4,19}$ are  not 3-colorable.
\item
$G_{16,5}$ and $G_{5,16}$ are not 3-colorable.
\item
$G_{13,7}$ and $G_{7,13}$ are not 3-colorable.
\item
$G_{12,10}$ and $G_{10,12}$ are not 3-colorable.
\item
$G_{11,11}$ is not 3-colorable.
\item
$G_{19,3}$ and $G_{3,19}$ are 3-colorable (this is trivial).
\item
$G_{18,4}$ and $G_{4,18}$ are  3-colorable.
\item
$G_{15,6}$ and $G_{6,15}$ are 3-colorable.
\item
$G_{12,9}$ and $G_{9,12}$ are 3-colorable.
\end{enumerate}
\end{theorem}

\begin{proof}

We just consider the grids $G_{n,m}$ were $n\ge m$.

\noindent
$1,2,3,4,5$)

In Table~\ref{ta:uncol3} we show that several grids are not 3-colorable.
For each $(n,m)$ we use either Corollary~\ref{uncolor2} or \ref{uncolor3}.
In the table we give, for each $(n,m)$, the value of $\ceil{\frac{nm}{3}}$,
the $q,r$ such that $\ceil{\frac{nm}{3}}=qn+r$ with $0\le r\le n-1$,
which corollary we use (Use),  the premise of the corollary (Prem), and the arithmetic showing the premise is true 
(Arith).

\begin{table}[htbp]
\centering
\begin{tabular}{c c c c c c c c }
$m$ & $n$ & $\lceil \frac{nm}{3} \rceil$ & $q$ & $r$ & Use & Prem & Arith  \\ [0.5ex]
\hline\hline
4  & 19 & 26 & 1 &  7 & Cor~\ref{uncolor2} & $\qone$     & $6 < 7 \le 19$ \\
5  & 16 & 27 & 1 & 11 & Cor~\ref{uncolor2} & $\qone$     & $10< 11 \le 16$ \\
7  & 13 & 31 & 2 &  5 & Cor~\ref{uncolor3} & $\qnotone$  & $11 < 13$\\
10 & 12 & 40 & 3 &  4 & Cor~\ref{uncolor3} & $\qnotone$  & $11 < 12$\\
11 & 11 & 41 & 3 &  8 & Cor~\ref{uncolor3} & $\qnotone$  & $10 \frac{1}{3} < 11$\\
\hline
\end{tabular}
\caption{$(m,n)$ such that $G_{m,n}$ is not 3-colorable \label{ta:uncol3}}
\end{table}

\bigskip

\noindent
$6$) $G_{19,3}$ is clearly 3-colorable.

\bigskip

\noindent
$7$) 
$G_{18,4}$ is 3-colorable by Theorem~\ref{th:cplusone} with $c=3$.

\bigskip

\noindent
$8$) $G_{15,6}$ is 3-colorable by Corollary~\ref{co:rr} with $c=3$.

\bigskip

\noindent
$9$) 
$G_{12,9}$ is 3-colorable by Corollary~\ref{co:prime} with $p=3$.
\end{proof}

\begin{theorem}\label{th:10x10}
$G_{10,10}$ is 3-colorable.
\end{theorem}

\begin{proof}
The 3-coloring is in Table~\ref{ta:1010}.

\begin{table}[htpb]
\[
\begin{array}{|c|c|c|c|c|c|c|c|c|c|}
\hline
 1 & 1 & 1 & 1 & 2 & 2 & 3 & 3 & 2 & 3 \cr
\hline
 1 & 2 & 2 & 3 & 1 & 1 & 1 & 3 & 3 & 2 \cr
\hline
 3 & 1 & 2 & 3 & 1 & 2 & 2 & 1 & 1 & 3 \cr
\hline
 3 & 2 & 1 & 2 & 2 & 1 & 3 & 1 & 3 & 1 \cr
\hline
 1 & 2 & 3 & 3 & 3 & 2 & 3 & 2 & 1 & 1 \cr
\hline
 3 & 1 & 2 & 2 & 3 & 3 & 1 & 2 & 2 & 1 \cr
\hline
 2 & 3 & 1 & 2 & 3 & 2 & 1 & 3 & 1 & 2 \cr
\hline
 2 & 2 & 3 & 1 & 1 & 3 & 2 & 3 & 2 & 1 \cr
\hline
 3 & 3 & 3 & 1 & 2 & 1 & 2 & 2 & 1 & 2 \cr
\hline
 2 & 3 & 2 & 1 & 2 & 3 & 1 & 1 & 3 & 3 \cr
\hline
 \end{array}
\]
\caption{3-Coloring of $G_{10,10}$\label{ta:1010}.}
\end{table}
\end{proof}

\begin{note}
We found the coloring in Theorem~\ref{th:10x10} by the following steps.
\begin{itemize}
\item
We found a size 34 rectangle free subset of $G_{10,10}$ (by hand).
Frankly we were trying to prove there was no such rectangle free set and hence
$G_{10,10}$ would not be 3-colorable.
\item
We used the rectangle free set for color 1 and completed the coloring 
with a simple computer program.  
\end{itemize}
It is an open problem to find a general theorem that has a corollary that 
$G_{10,10}$ is 3-colorable.
\end{note}

\begin{theorem}\label{th:11x10}
If $A\subseteq G_{11,10}$ and
$A$ is rectangle-free then $|A|\le 36=\ceil{\frac{11\cdot 10}{3}}-1$.
Hence $G_{11,10}$ is not 3-colorable.
\end{theorem}

\begin{proof}

We divide the proof into cases.
Every case will either conclude that
$|A|\le 36$ or $A$ cannot exist.

For $1\le j\le 10$ let $x_j$ be the number of elements of
$A$ in column $j$.
We assume 

$$x_1\ge \cdots \ge x_{10}.$$

\begin{enumerate}
\item
$5\le x_1\le 11$.

By Lemma~\ref{le:maxrf1} with $x=5$, $n=11$, $m=10$ we have

$$|A| \le x + m-1 +  \maxrf {n-x}{m-1} \le 5 + 10 -1 + \maxrf {11-5} {10-1} \le 14+\maxrf{6}{9}.$$

By Lemma~\ref{le:values} we have $\maxrf{6}{9}=21$. Hence

$$|A|\le14 + 21 = 35\le 36.$$

\item
There exists $k$, $0\le k\le 6$, such that
$x_1=\cdots = x_k=4$ and $x_{k+1}\le 3$.
Then

$$|A| = \sum_{j=1}^{10} x_j = (\sum_{j=1}^{k} x_j) + (\sum_{j=k+1}^{10} x_j) \le 4k + 3(10-k)= 30+k$$

Since $k\le 6$ this quantity is $\le 30+6=36.$
Hence $|A|\le 36$.

\item
$x_1=\cdots=x_7=4$. 
Let $G'$ be the grid restricted to the first 7 
columns.
Let $B$ be $A$ restricted to $G'$.
\begin{enumerate}
\item
There exists $1\le j_1<j_2<j_3\le 7$ such that
$$|C_{j_1}\cap C_{j_2} \cap C_{j_3}|=1.$$ 
By renumbering we can assume that 
$$|C_1\cap C_2\cap C_3|=1$$
and that the intersection is in row 10.
The following picture summarizes our knowledge.
We use $R$ to denote where an element of $A$ is.

\[
\begin{array}{|c|c|c|c|c|c|c|c|}
\hline
  & 1 & 2 & 3 & 4 & 5 & 6 & 7 \cr
\hline
1 & R &   &   &   &   &   &   \cr
\hline
2 & R &   &   &   &   &   &   \cr
\hline
3 & R &   &   &   &   &   &   \cr
\hline
4 &   & R &   &   &   &   &   \cr
\hline
5 &   & R &   &   &   &   &   \cr
\hline
6 &   & R &   &   &   &   &   \cr
\hline
7 &   &   & R &   &   &   &   \cr
\hline
8 &   &   & R &   &   &   &   \cr
\hline
9 &   &   & R &   &   &   &   \cr
\hline
10 & R & R & R &   &   &   &   \cr
\hline
11 &   &   &   &   &   &   &   \cr
\hline
\end{array}
\]

Let $4\le j\le 7$.
In column $j$ there can be at most 1 $R$ 
in rows 1,2,3, at most 1 $R$ in rows 4,5,6, at
most 1 $R$ in row 7,8,9,10.
Hence, since $x_j=4$, the $j$th column has an $R$ in the 11th row. 
Also note that there must be exactly 1 R among rows 1,2,3,
exactly 1 R among rows 4,5,6, and exactly one R among
rows 7,8,9,10.

One can easily show that after a permutation of the rows
we must have the following in the first 6 columns:

\[
\begin{array}{|c|c|c|c|c|c|c|c|}
\hline
  & 1 & 2 & 3 & 4 & 5 & 6 & 7 \cr
\hline
1 & R &   &   & R &   &   &   \cr
\hline
2 & R &   &   &   & R &   &   \cr
\hline
3 & R &   &   &   &   & R &   \cr
\hline
4 &   & R &   &R  &   &   &   \cr
\hline
5 &   & R &   &   &R  &   &   \cr
\hline
6 &   & R &   &   &   &R  &   \cr
\hline
7 &   &   & R & R &   &   &   \cr
\hline
8 &   &   & R &   &R  &   &   \cr
\hline
9 &   &   & R &   &   & R &   \cr
\hline
10 & R & R & R &   &   &   &   \cr
\hline
11 &   &   &   & R & R & R & R \cr
\hline
\end{array}
\]

It is easy to see that if an $R$ is placed anywhere in column 7
then a rectangle is formed.

\item
There exists $1\le j_1<j_2<j_3\le 7$ such that
$|C_{j_1}\cap C_{j_2}|=|C_{j_1}\cap C_{j_2}|=|C_{j_2}\cap C_{j_3}|=1$.
We can assume that for all sets of three columns their intersection is $\es$
(else we would be in Case a).
We can assume that $j_1=1$, $j_2=2$, $j_3=3$ and that the first
three columns are as in the picture below.

\[
\begin{array}{|c|c|c|c|c|c|c|c|}
\hline
  & 1 & 2 & 3 & 4 & 5 & 6 & 7 \cr
\hline
1 & R &   &   &   &   &   &   \cr
\hline
2 & R &   &   &   &   &   &   \cr
\hline
3 &   & R &   &   &   &   &   \cr
\hline
4 &   & R &   &   &   &   &   \cr
\hline
5 &   &   & R &   &   &   &   \cr
\hline
6 &   &   & R &   &   &   &   \cr
\hline
7 & R & R &   &   &   &   &   \cr
\hline
8 & R &   & R &   &   &   &   \cr
\hline
9 &   & R & R &   &   &   &   \cr
\hline
10 &   &   &   &   &   &   &   \cr
\hline
11 &   &   &   &   &   &   &   \cr
\hline
\end{array}
\]

Since no three columns intersect there can be no $R$'s in the
7th, 8th, or 9th row of columns 4,5,6,7.
In later pictures we will use $X$ to denote that an $R$ cannot be in that space.

There are essentially five ways that the columns 4,5,6,7 and rows 10,11
can be arranged. Here are all of them:

\[
\begin{array}{|c|c|c|c|c|}
\hline
  &  4 & 5 & 6 & 7 \cr
\hline
10 &   &   &   &      \cr
\hline
11 & R & R & R & R \cr
\hline
\end{array}
\]

\[
\begin{array}{|c|c|c|c|c|}
\hline
  &  4 & 5 & 6 & 7 \cr
\hline
10 &   &   &   & R    \cr
\hline
11 & R & R & R &   \cr
\hline
\end{array}
\]

\[
\begin{array}{|c|c|c|c|c|}
\hline
  &  4 & 5 & 6 & 7 \cr
\hline
10 &   &   & R & R    \cr
\hline
11 & R & R &   &   \cr
\hline
\end{array}
\]

\[
\begin{array}{|c|c|c|c|c|}
\hline
  &  4 & 5 & 6 & 7 \cr
\hline
10 &   &   &   & R    \cr
\hline
11 & R & R & R & R \cr
\hline
\end{array}
\]

\[
\begin{array}{|c|c|c|c|c|}
\hline
  &  4 & 5 & 6 & 7 \cr
\hline
10 &   &   & R & R    \cr
\hline
11 & R & R & R &   \cr
\hline
\end{array}
\]

The third one is the hardest to analyze. Hence we analyze
that one and leave the rest to the reader.
We can assume the following picture happens.

\[
\begin{array}{|c|c|c|c|c|c|c|c|}
\hline
  & 1 & 2 & 3 & 4 & 5 & 6 & 7 \cr
\hline
1 & R &   &   &   &   &   &   \cr
\hline
2 & R &   &   &   &   &   &   \cr
\hline
3 &   & R &   &   &   &   &   \cr
\hline
4 &   & R &   &   &   &   &   \cr
\hline
5 &   &   & R &   &   &   &   \cr
\hline
6 &   &   & R &   &   &   &   \cr
\hline
7 & R & R &   & X &X  & X & X \cr
\hline
8 & R &   & R & X &X  & X & X \cr
\hline
9 &   & R & R & X & X & X &X  \cr
\hline
10 &   &   &   &   &   & R & R \cr
\hline
11 &   &   &   & R & R &   &   \cr
\hline
\end{array}
\]

In columns 4,5,6,7 there must be exactly one $R$ from row 1 or 2,
exactly one $R$ from row 3 or 4, and exactly one $R$ from row 5 or 6.
If we look just at columns 4 and 5, and permute the rows as needed,
we can assume that the following picture happens:

\[
\begin{array}{|c|c|c|c|c|c|c|c|}
\hline
  & 1 & 2 & 3 & 4 & 5 & 6 & 7 \cr
\hline
1 & R &   &   & R &   &   &   \cr
\hline
2 & R &   &   &   & R &   &   \cr
\hline
3 &   & R &   & R &   &   &   \cr
\hline
4 &   & R &   &   & R &   &   \cr
\hline
5 &   &   & R & R &   &   &   \cr
\hline
6 &   &   & R &   & R &   &   \cr
\hline
7 & R & R &   &X  &X  &X  &X  \cr
\hline
8 & R &   & R &X  &X  &X  &X  \cr
\hline
9 &   & R & R &X  &X  &X  &X  \cr
\hline
10 &   &   &   &   &   & R & R \cr
\hline
11 &   &   &   & R & R &   &   \cr
\hline
\end{array}
\]

Either there is an $R$ at both (Row1,Col6) and (Row2,Col7) or
there is an $R$ at both (Row1,Col7) and (Row2,Col6). 
We call the first one {\it slanting NW-SE} and the former {\it slanting NE-SW}.
Similar conditions apply for Rows 3 and 4, and Rows 5 and 6.
Two of the pairs of rows must have the same slant.
We can assume that Rows 1,2 and Rows 3,4 both slant NW-SE
(the other cases are similar).
We can assume that the following picture happens:

\[
\begin{array}{|c|c|c|c|c|c|c|c|}
\hline
  & 1 & 2 & 3 & 4 & 5 & 6 & 7 \cr
\hline
1 & R &   &   & R &   & R &   \cr
\hline
2 & R &   &   &   & R &   & R \cr
\hline
3 &   & R &   & R &   & R &   \cr
\hline
4 &   & R &   &   & R &   & R \cr
\hline
5 &   &   & R & R &   &   &   \cr
\hline
6 &   &   & R &   & R &   &   \cr
\hline
7 & R & R &   & X & X & X & X \cr
\hline
8 & R &   & R & X &X  &X  &X  \cr
\hline
9 &   & R & R & X & X & X &X  \cr
\hline
10 &   &   &   &   &   & R & R \cr
\hline
11 &   &   &   & R & R &   &   \cr
\hline
\end{array}
\]

Clearly a rectangle is formed.

\item
There exists $1\le j_1<j_2<j_3\le 7$ such that
$|C_{j_1}\cap C_{j_2}|=|C_{j_1}\cap C_{j_3}|=1$
but $|C_{j_2}\cap C_{j_3}|=0$.
We can assume that for all sets of three columns their intersection is $\es$
(else we would be in Case a).
We can assume that $j_1=1$, $j_2=2$, $j_3=3$ and that the first
three columns are as in the picture below.

\[
\begin{array}{|c|c|c|c|c|c|c|c|}
\hline
  & 1 & 2 & 3 & 4 & 5 & 6 & 7 \cr
\hline
1 & R &   &   &   &   &   &   \cr
\hline
2 & R &   &   &   &   &   &   \cr
\hline
3 & R & R &   &   &   &   &   \cr
\hline
4 & R &   & R &   &   &   &   \cr
\hline
5 &   & R &   &   &   &   &   \cr
\hline
6 &   & R &   &   &   &   &   \cr
\hline
7 &   & R &   &   &   &   &   \cr
\hline
8 &   &   & R &   &   &   &   \cr
\hline
9 &   &   & R &   &   &   &   \cr
\hline
10 &   &   & R &   &   &   &   \cr
\hline
11 &   &   &   &   &   &   &   \cr
\hline
\end{array}
\]

The proof is similar to the proof of case 3a.

\item
There exists $1\le j_1<j_2<j_3\le 7$ such that
$|C_{j_1}\cap C_{j_2}|=|C_{j_1}\cap C_{j_3}|=|C_{j_1}\cap C_{j_3}|=0$.
We can assume that $C_{j_1}=C_1$ and has rows $1,2,3,4$,
                   $C_{j_2}=C_2$ and has rows $5,6,7,8$, and
                   $C_{j_3}=C_3$ and has rows $9,10,11,12$.
Too bad we only have 11 rows!
\end{enumerate}
\end{enumerate}

\end{proof}

\begin{theorem}\label{th:obs3}
$$\OBS_3=
\{G_{19,4}, G_{16,5}, G_{13,7}, G_{11,10}, G_{10,11}, G_{7,13}, G_{5,16}, G_{4,19} 
\}.$$
\end{theorem}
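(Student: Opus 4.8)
The argument mirrors the proof of Theorem~\ref{th:obs2}. By the symmetry $G_{a,b}\in\OBS_3$ iff $G_{b,a}\in\OBS_3$ (a grid and its transpose are simultaneously $3$-minimal or not), it suffices to analyze grids $G_{n,m}$ with $n\ge m$; on that side the asserted obstruction set is $\{G_{19,4},\,G_{16,5},\,G_{13,7},\,G_{11,10}\}$. I would split the work into two parts: first, verify that each of these four grids is $3$-minimal; second, verify that every other grid $G_{n,m}$ with $m\le n$ is either $3$-colorable or properly contains one of the four (and hence is not $3$-minimal). The four transposed grids then follow by the symmetry.

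For the first part, recall that $G_{19,4}$, $G_{16,5}$, and $G_{13,7}$ are not $3$-colorable by Lemma~\ref{le:col3}, and $G_{11,10}$ is not $3$-colorable by Lemma~\ref{le:11x10}. A grid $G_{n,m}$ with $n,m\ge 4$ lies in $\OBS_3$ exactly when, in addition, its two immediate predecessors $G_{n-1,m}$ and $G_{n,m-1}$ are $3$-colorable (every grid properly contained in $G_{n,m}$ is contained in one of these), and each predecessor is $3$-colorable as soon as it is contained in one of the grids that Lemma~\ref{le:col3}(6)--(9) or Lemma~\ref{le:10x10} proves $3$-colorable. Concretely: $G_{18,4}$ and $G_{19,3}$ are $3$-colorable by Lemma~\ref{le:col3}(7),(6), so $G_{19,4}\in\OBS_3$; $G_{15,5}\subseteq G_{15,6}$ and $G_{16,4}\subseteq G_{18,4}$ are $3$-colorable, so $G_{16,5}\in\OBS_3$; $G_{12,7}\subseteq G_{12,9}$ and $G_{13,6}\subseteq G_{15,6}$ are $3$-colorable, so $G_{13,7}\in\OBS_3$; and $G_{10,10}$ (Lemma~\ref{le:10x10}) together with $G_{11,9}\subseteq G_{12,9}$ gives $G_{11,10}\in\OBS_3$.

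For the second part I would tabulate, for each value of $m$ with $m\le n$, the possibilities, using only four ``outer'' $3$-colorable grids --- $G_{18,4}$, $G_{15,6}$, $G_{12,9}$, $G_{10,10}$ --- together with the four uncolorable grids above. If $m\le 3$ then $G_{n,m}$ is $3$-colorable trivially. If $m=4$: $G_{n,4}$ is $3$-colorable for $n\le 18$ (contained in $G_{18,4}$), is the minimal grid $G_{19,4}$ for $n=19$, and properly contains $G_{19,4}$ for $n\ge 20$. If $m=5$: $3$-colorable for $n\le 15$, minimal at $n=16$, contains $G_{16,5}$ for $n\ge 17$. If $m=6$: $3$-colorable for $n\le 15$, and contains $G_{16,5}$ for $n\ge 16$. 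If $m=7$: $3$-colorable for $n\le 12$, minimal at $n=13$, contains $G_{13,7}$ for $n\ge 14$. If $m=8$ or $m=9$: $3$-colorable for $n\le 12$, and contains $G_{13,7}$ for $n\ge 13$. If $m=10$: $3$-colorable at $n=10$, minimal at $n=11$, contains $G_{11,10}$ for $n\ge 12$. If $m\ge 11$ (with $n\ge m$): contains $G_{11,10}$. Hence no grid with $m\le n$ other than the four is $3$-minimal, and applying the symmetry finishes the proof.

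The main point is that essentially all the substance lies in the supporting lemmas: the uncolorability of the four grids comes from Lemma~\ref{le:col3} (itself powered by the Zarankiewicz-type bounds of Section~\ref{se:lower}) and from the ad hoc counting argument of Lemma~\ref{le:11x10}, while the colorings come from Lemma~\ref{le:col3}(6)--(9) and Lemma~\ref{le:10x10}. The only real risk here is bookkeeping: one must pin down the colorable/uncolorable boundary for each $m$ exactly, and in particular confirm that the grids which are not $3$-colorable yet not minimal --- such as $G_{12,10}$, $G_{11,11}$, and $G_{16,6}$ --- are correctly dispatched by exhibiting a strictly smaller grid they contain that is already known to be non-$3$-colorable, rather than being miscounted as new obstructions.
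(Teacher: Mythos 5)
Your proposal is correct and follows exactly the route the paper takes: establish minimality of the four grids with $n\ge m$ via Lemmas~\ref{le:col3}, \ref{le:10x10}, and \ref{le:11x10}, then sweep over $m$ as in the proof of Theorem~\ref{th:obs2} and close with the transpose symmetry. In fact the paper leaves the case sweep to the reader, and your tabulation supplies precisely those details, consistently with the paper's chart of colorable grids.
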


\begin{proof}

We only deal with $G_{n,m}$ where $n\ge m$.
We show that, for all the grids $G_{n,m}$ listed where $n\ge m$,
$G_{n,m}$ is not 3-colorable but $G_{m-1,m}$ and $G_{m,m-1}$ are 3-colorable.

\begin{enumerate}
\item
$G_{19,4}$ is not 3-colorable by Theorem~\ref{th:col3}.
$G_{18,4}$ and $G_{19,3}$ are 3-colorable by Theorem~\ref{th:col3}.
\item
$G_{16,5}$ is not 3-colorable by Theorem~\ref{th:col3}.
$G_{15,5}$ and $G_{16,4}$ are 3-colorable by Theorem~\ref{th:col3}.
\item
$G_{13,7}$ is not 3-colorable by Theorem~\ref{th:col3}.
$G_{12,7}$ and $G_{13,6}$ are 3-colorable by Theorem~\ref{th:col3}.
\item
$G_{11,10}$ is not 3-colorable by Theorem~\ref{th:11x10}.
$G_{10,10}$ is 3-colorable by Theorem~\ref{th:10x10}.
$G_{11,9}$ is 3-colorable by Theorem~\ref{th:col3}.
\end{enumerate}

Table~\ref{ta:col3} indicates exactly which grids are 3-colorable.
The entry for $(n,m)$ is $C$ if $\Gnm$ is 3-colorable, and
$N$ if $\Gnm$ is not 3-colorable.
From this table one easily sees that the grids listed in this theorem
are the only elements of $\OBS_3$.

\begin{table}[htpb]
 \[
 \begin{array}{|c|c|c|c|c|c|c|c|c|c|c|c|c|c|c|c|c|c|c|}
 \hline
   & 03 & 04 & 05 & 06 & 07 & 08 & 09 & 10 & 11 & 12 & 13 & 14 & 15 & 16 & 17 & 18 & 19 & 20 \cr
 \hline
  3& C  & C  & C  & C  & C  & C  & C  & C  & C  & C  & C  & C  & C  & C  & C  & C  & C & C \cr
 \hline
  4& C  & C  & C  & C  & C  & C  & C  & C  & C  & C  & C  & C  & C  & C  & C  & C  & N & N \cr
 \hline
  5& C  & C  & C  & C  & C  & C  & C  & C  & C  & C  & C  & C  & C  & N  & N  & N  & N & N \cr
 \hline
  6& C  & C  & C  & C  & C  & C  & C  & C  & C  & C  & C  & C  & C  & N  & N  & N  & N & N \cr
 \hline
  7& C  & C  & C  & C  & C  & C  & C  & C  & C  & C  & N  & N  & N  & N  & N  & N  & N & N \cr
 \hline
  8& C  & C  & C  & C  & C  & C  & C  & C  & C  & C  & N  & N  & N  & N  & N  & N  & N & N \cr
 \hline
  9& C  & C  & C  & C  & C  & C  & C  & C  & C  & C  & N  & N  & N  & N  & N  & N  & N & N \cr
 \hline
 10& C  & C  & C  & C  & C  & C  & C  & C  & N  & N  & N  & N  & N  & N  & N  & N  & N & N \cr
 \hline
 11& C  & C  & C  & C  & C  & C  & C  & N  & N  & N  & N  & N  & N  & N  & N  & N  & N & N \cr
 \hline
 12& C  & C  & C  & C  & C  & C  & C  & N  & N  & N  & N  & N  & N  & N  & N  & N  & N & N \cr
 \hline
 13& C  & C  & C  & C  & N  & N  & N  & N  & N  & N  & N  & N  & N  & N  & N  & N  & N & N \cr
 \hline
 14& C  & C  & C  & C  & N  & N  & N  & N  & N  & N  & N  & N  & N  & N  & N  & N  & N & N \cr
 \hline
 15& C  & C  & C  & C  & N  & N  & N  & N  & N  & N  & N  & N  & N  & N  & N  & N  & N & N \cr
 \hline
 16& C  & C  & N  & N  & N  & N  & N  & N  & N  & N  & N  & N  & N  & N  & N  & N  & N & N \cr
 \hline
 17& C  & C  & N  & N  & N  & N  & N  & N  & N  & N  & N  & N  & N  & N  & N  & N  & N & N \cr
 \hline
 18& C  & C  & N  & N  & N  & N  & N  & N  & N  & N  & N  & N  & N  & N  & N  & N  & N & N \cr
 \hline
 19& C  & N  & N  & N  & N  & N  & N  & N  & N  & N  & N  & N  & N  & N  & N  & N  & N & N \cr
 \hline
 20& C  & N  & N  & N  & N  & N  & N  & N  & N  & N  & N  & N  & N  & N  & N  & N  & N & N \cr
 \hline
 \end{array}
 \]
\caption{3-Colorable Grids ($C$) and non 3-Colorable Grids ($N$)}\label{ta:col3}
\end{table}

\end{proof}

\section{Which Grids Can be 4-Colored?}\label{se:col4}

\subsection{Results that Use our Tools}

\begin{theorem}\label{th:col4}~
\begin{enumerate}
\item
$G_{41,5}$ and $G_{5,41}$ are not 4-colorable.
\item
$G_{31,6 }$  and $G_{6,31}$  are not 4-colorable.
\item
$G_{29,7 }$  and $G_{7,29}$ are not 4-colorable.
\item
$G_{25,9 }$  and $G_{9,25}$ are not 4-colorable.
\item
$G_{23,10}$  and $G_{10,23 }$ are not 4-colorable.
\item
$G_{22,11}$  and $G_{11,22 }$ are not 4-colorable.
\item
$G_{21,13}$  and $G_{13,21}$ are not 4-colorable.
\item
$G_{20,17}$  and $G_{17,20}$ are not 4-colorable.
\item
$G_{19,18}$  and $G_{18,19}$ are not 4-colorable.
\item
$G_{41,4}$ and $G_{4,41}$ are 4-colorable (this is trivial).
\item
$G_{40,5}$ and $G_{5,40}$ are 4-colorable.
\item
$G_{30,6}$ and $G_{6,30}$ are 4-colorable.
\item
$G_{28,8}$ and $G_{8,28}$ are 4-colorable.
\item
$G_{20,16}$ and $G_{16,20}$ are 4-colorable.
\end{enumerate}
\end{theorem}

\begin{proof}

We only consider grids $G_{n.m}$ where $n\ge m$.

\smallskip

\noindent
1,2,3,4,5,6,7,8,9)

In Table~\ref{ta:uncol4}  we show that several grids are not 4-colorable.
For each $(n,m)$ we use either Corollary~\ref{uncolor2} or \ref{uncolor3}.
In the table we give, for each $(n,m)$, the value of $\ceil{\frac{nm}{4}}$,
the $q,r$ such that $\ceil{\frac{nm}{4}}=qn+r$ with $0\le r\le n-1$,
which corollary we use (Use),  the premise of the corollary (Prem), and the arithmetic showing the premise is true 
(Arith).

\begin{table}[htbp]
\centering
\begin{tabular}{ c c c c c c c c }
$m$ & $n$ & $\ceil{\frac{nm}{4}}$ & $q$ & $r$ & Use & Prem& Arith \\
\hline\hline
5 & 41 & 52 & 1 & 11 & Cor~\ref{uncolor2} & $\frac{m(m-1)}{2} < r\le n$ &$10< 11 \le 41$\\
6 & 31 & 47 & 1 & 16 & Cor~\ref{uncolor2} & $\frac{m(m-1)}{2} < r\le n$ &$15< 16 \le 31$\\
7 & 29 & 51 & 1 & 22 & Cor~\ref{uncolor2} & $\frac{m(m-1)}{2} < r\le n$ &$21< 22 \le 29$\\
9 & 25 & 57 & 2 & 7  & Cor~\ref{uncolor3} & $\frac{m(m-1)-2qr}{q(q-1)} <n$ & $22 < 25$\\
10 & 23 & 58 & 2 & 12  & Cor~\ref{uncolor3} & $\frac{m(m-1)-2qr}{q(q-1)}<n$ & $21 < 23$\\
11 & 22 & 61 & 2 & 17  & Cor~\ref{uncolor3} & $\frac{m(m-1)-2qr}{q(q-1)} < n$ & $21 < 22$\\
13 & 21 & 69 & 3 & 6  & Cor~\ref{uncolor3} & $\frac{m(m-1)-2qr}{q(q-1)} < n$& $20 < 21$\\
17 & 20 & 85 & 4 & 5  & Cor~\ref{uncolor3} & $\frac{m(m-1)-2qr}{q(q-1)} < n$& $19 \frac{1}{3} < 20$\\
18 & 19 & 86 & 4 & 10  & Cor~\ref{uncolor3} & $\frac{m(m-1)-2qr}{q(q-1)} < n$ & $18 \frac{5}{6} < 19$\\
\hline
\end{tabular}
\caption{$(m,n)$ such that $G_{m,n}$ is not 4-colorable \label{ta:uncol4}}
\end{table}

\smallskip

\noindent
10) $G_{41,4}$ is clearly 4-colorable.

\smallskip

\noindent
11) $G_{40,5}$ is 4-colorable by Theorem~\ref{th:cplusone} with $c=4$.

\smallskip

\noindent
12) $G_{30,6}$ is 4-colorable by Theorem~\ref{th:cplusgen} with $c=4$ and $c'=2$.

\noindent
13) $G_{28,8}$ is 4-colorable by Theorem~\ref{th:primepower} 
with $p=2$, $d=3$, and $s=1$.

\smallskip

\noindent
14) $G_{20,16}$ is 4-colorable by Theorem~\ref{th:primepower} with
$p=2$, $d=2$, and $s=2$

\end{proof}

\begin{theorem}\label{th:19x17}
If $A\subseteq G_{19,17}$ and
$A$ is rectangle-free then $|A|\le 80=\ceil{\frac{19\cdot17}{4}}-1$.
Hence $G_{19,17}$ is not 4-colorable.
\end{theorem}

\begin{proof}
We divide the proof into cases.
Every case will either conclude that
$|A|\le 80$ or $A$ cannot exist.

For $1\le j\le 17$ let $x_j$ be the number of elements of
$A$ in column $j$.
We assume 

$$x_1\ge \cdots \ge x_{17}.$$

\begin{enumerate}
\item
$6\le x_1\le 19$. 

By Lemma~\ref{le:maxrf1} with $x=6$, $n=19$, $m=17$,

$$|A| \le x + m-1 +  \maxrf {n-x}{m-1} \le 6 + 17 - 1 + \maxrf{19-6}{17-1} = 22+\maxrf{13}{16}.$$

Assume, by way of contradiction, that $|A|\ge 81$.
Then $\maxrf{13}{16}\ge 59$
By Theorem~\ref{th:density} with $n=16$, $m=13$, $a=59$, $q=3$, $r=11$

$$16\le \floor{\frac{13\times 12-2\times 3\times 11}{3\times 2}}=15.$$

This is a contradiction.

\item
There exists $k$, $0\le k\le 12$, such that
$x_1=\cdots = x_k=5$ and $x_{k+1}\le 4$.
Then

$$|A| = \sum_{j=1}^{17} x_j = (\sum_{j=1}^{k} x_j) + (\sum_{j=k+1}^{17} x_j)
\le 5k + 4(17-k)= 68+k.$$

Since $k\le 12$ this quantity is $\le 68 + 12 = 80.$
Hence $|A|\le 80$.

\item
$x_1=x_2=\cdots=x_{13}=5$.
Look at the grid restricted to the first 13 columns.
Let $B$ be $A$ restricted to that grid.
Note that $B$ is a rectangle-free subset
of $G_{19,13}$ of size 65.
By Theorem~\ref{th:density} with
$n=19$, $m=13$, $a=65$, $q=3$, and $r=8$
we have

$$19\le \floor{\frac{13\times 12-2\times 8\times 3}{3\times 2}}=18.$$

This is a contradiction, hence $A$ cannot exist.
\end{enumerate}
\end{proof}

\begin{theorem}\label{th:24x9}
$G_{24,9}$ is 4-colorable.
\end{theorem}

\begin{proof}
Table~\ref{ta:s96} shows a 
strong $(4,1)$-coloring of $G_{9,6}$.
Apply Theorem~\ref{th:strong}
with $c=4$ and $c'=1$ to obtain a 4-coloring of $G_{24,9}$.

\begin{table}[htbp]
 \[
 \begin{array}{|c|c|c|c|c|c|c|c|}
\hline
1  & 2 & 2 & 1 & 2 & 2\cr
\hline
1  & 3 & 3 & 2 & 1 & 3\cr
\hline
1  & 4 & 4 & 3 & 3 & 1\cr
\hline
2  & 1 & 4 & 1 & 4 & 3\cr
\hline
3  & 1 & 2 & 3 & 1 & 4\cr
\hline
4  & 1 & 3 & 4 & 2 & 1\cr
\hline
4  & 3 & 1 & 1 & 3 & 4\cr
\hline
2  & 4 & 1 & 4 & 1 & 2\cr
\hline
3  & 2 & 1 & 2 & 4 & 1\cr
\hline
\end{array}
\]
\caption{Strong 4-coloring of $4_{9,6}$.\label{ta:s96}}
\end{table}
\end{proof}

It is an open question to generalize the construction in Theorem~\ref{th:24x9}.

\subsection{Results that Needed a Computer Program}

At this point in the paper the only grids whose 4-colorability is unknown
are $G_{22,10}$, $G_{21,11}$, $G_{21,12}$, $G_{17,17}$, $G_{17,18}$, and $G_{18,18}$.
This may seem like a computational problem that one could solve with a computer; however,
the number of possible 4-coloring of (say) $G_{18,18}$ is on the order of $4^{324}$.
By contrast, the number of protons in the universe, also called Eddington's number,
has been estimated at approximately $4^{128}$~\cite{protons}

The only technique we know of to show that $G_{n,m}$ is {\it not} $c$-colorable
is to show that no rectangle free set of $G_{n,m}$ is of size $\ge \ceil{\frac{nm}{c}}$.
In 2008 we obtained a rectangle free set of $G_{17,17}$ of size $74 = \ceil{\frac{17\times 17}{4}}+1$
(using a computer program).
Hence we were confident that $G_{17,17}$ is 4-colorable.
Using this rectangle free set as a starting point the number of possible 4-colorings
would be $4^{289-74}=4^{215}$ which is still larger than Eddington's number.
For all of the other grids $G_{n,m}$ that we did not know if they were 4-colorable,
a rectangle free set of size $\ceil{nm/4}$ was found. Hence either they are  all
4-colorable or there is a different technique to show grids are not $c$-colorable.

On November 30, 2009 William Gasarch (the second author on this paper) posted on his 
blog~\cite{17posedpost} {\it The $17\times 17$ challenge:} if someone emails 
William a 4-coloring of $G_{17,17}$ then he  will give them  \$289.00. 
An earlier version of this paper was posted as well.
Then the following happened:

\begin{enumerate}
\item
Brian Hayes, a popular science writer, put the problem on his blog~\cite{17hayes} thus
exposing the problem to many more people.
\item
Brad Larsen noticed that we didn't have a 4-coloring of $G_{21,11}$ and
$G_{22,10}$. He then found such 4-colorings using a SAT solver which, in his words,
took about 45 seconds.
\item
Many people worked on finding a 4-coloring of $G_{17,17}$ (for the money! for the glory!)
but could not solve it. This lead to speculation that
the problem may be difficult. Evidence for this was later found~\cite{gridnpc}, though by
that time a 4-coloring of $G_{17,17}$ had already been found. Irony?
\item
Bernd Steinbach and Christian Posthoff worked on solving the problem with  SAT solvers.
In a sequence of three brilliant papers they solved the problem
\cite{17solvedpaperprea,17solvedpaperpreb,17solvedpaper}.
This was very serious and deep research that may lead to improved SAT Solvers
for other problems.
They announced their result in February of 2012.
See \cite{17solvedpost} for the blog post about it.
Dr. Gasarch happily paid them the \$289.00.
\item
Marzio De Biasi easily found an extension of the 4-coloring of $G_{17,17}$ to $G_{18,18}$ and
posted it as a comment on~\cite{17solvedpost}.
Bernd Steinbach and Christian Posthoff had already known this coloring as well.
\item
Bernd Steinbach and Christian Posthoff used their techniques to find a 4-coloring of $G_{21,12}$
and posted it as a comment on the blog post~\cite{17solvedpost}.
With this $OBS_4$ was completely known!
\item
Inspired by the $17\times 17$ challenge and the solution to it
Neil Brewer and Dmitry Kamenetsky devised a contest 
at \url{http:infinitesearchspace.dyndns.org} 
that asked for the following: {\it For $c=1$ to 21 find the largest $n$
such that the $n\times n$ grid is $c$-colorable. You must also
present the coloring.} This lead to a lot of interesting discussion
including the following two points, one of which we use in our paper
\begin{enumerate}
\item
Tom Sirgedas obtained another 4-coloring of $G_{21,12}$.
To paraphrase him:
{\it
I noticed that the rectangle-free subset $A$ of $G_{21,12}$ in (the earlier version of) the paper had the following property:
If you viewed it as a $7\times 3$ grid of $3\times 3$ grids then in each of those $3\times 3$ grids
either all elements of the diagonal all in $A$ or none were in $A$.
I assumed that the solution would have this property. This cut down the number of possibilities by quite a lot.
Then, I just wrote an exhaustive depth-first-search to fill the grid one
color at a time, and each color one row at a time. Â I used a lot of pruning
and bitmasks, and solutions were found in a few minutes.
Unfortunately this approach seems to only work for this particular grid.
It won't scale well at all.
}
\item
Quimey Vivas posted a proof that if $c$ is prime then
$G_{c^2,c^2}$ is $c$-colorable. 
Ken Berg had previously send me a proof that $G_{c^2,c^2+c}$ is $c$-colorable
when $c$ is a power of a prime.
That proof is in this paper as Theorem~\ref{th:primepowersq}.
\end{enumerate}
\end{enumerate}

\begin{theorem}\label{th:21x12}
$G_{21,12}$ is 4-colorable
\end{theorem}

\begin{proof}
Bernd Steinbach and Christian Posthoff (as a team) and
Tom Sirgedas obtained a 4-coloring of $G_{21,12}$.
Tom Sirgedas's coloring is in Table~\ref{ta:2112}.

\begin{table}[htbp]
\[
\begin{array}{|c|c|c||c|c|c||c|c|c||c|c|c|}
\hline
1  & 2  & 2  & 3 &  2  & 1  & 3  & 4  & 4  & 3  & 1  & 3\cr
\hline
2 & 1 & 2 & 1 & 3 & 2 & 4 & 3 & 4 & 3 & 3 & 1\cr
\hline
2 & 2 & 1 & 2 & 1 & 3 & 4 & 4 & 3 & 1 & 3 & 3\cr
\hline
\hline
3 & 4 & 2 & 4 & 1 & 1 & 1 & 2 & 4 & 3 & 2 & 4\cr
\hline
2 & 3 & 4 & 1 & 4 & 1 & 4 & 1 & 2 & 4 & 3 & 2\cr
\hline
4 & 2 & 3 & 1 & 1 & 4 & 2 & 4 & 1 & 2 & 4 & 3\cr
\hline
\hline
2 & 4 & 1 & 3 & 2 & 4 & 2 & 3 & 3 & 4 & 2 & 1\cr
\hline
1 & 2 & 4 & 4 & 3 & 2 & 3 & 2 & 3 & 1 & 4 & 2\cr
\hline
4 & 1 & 2 & 2 & 4 & 3 & 3 & 3 & 2 & 2 & 1 & 4 \cr
\hline
\hline
 3 & 1 & 1 & 2 & 2 & 1 & 3 & 4 & 1 & 4 & 3 & 4 \cr
\hline
 1 & 3 & 1 & 1 & 2 & 2 & 1 & 3 & 4 & 4 & 4 & 3 \cr
\hline
 1 & 1 & 3 & 2 & 1 & 2 & 4 & 1 & 3 & 3 & 4 & 4 \cr
\hline
\hline
 3 & 4 & 4 & 1 & 3 & 2 & 2 & 4 & 2 & 1 & 1 & 3 \cr
\hline
 4 & 3 & 4 & 2 & 1 & 3 & 2 & 2 & 4 & 3 & 1 & 1 \cr
\hline
 4 & 4 & 3 & 3 & 2 & 1 & 4 & 2 & 2 & 1 & 3 & 1 \cr
\hline
\hline
 3 & 4 & 2 & 3 & 4 & 3 & 2 & 1 & 1 & 1 & 4 & 2 \cr
\hline
 2 & 3 & 4 & 3 & 3 & 4 & 1 & 2 & 1 & 2 & 1 & 4 \cr
\hline
 4 & 2 & 3 & 4 & 3 & 3 & 1 & 1 & 2 & 4 & 2 & 1 \cr
\hline
\hline
 3 & 3 & 1 & 4 & 2 & 4 & 4 & 1 & 3 & 2 & 1 & 2 \cr
\hline
 1 & 3 & 3 & 4 & 4 & 2 & 3 & 4 & 1 & 2 & 2 & 1 \cr
\hline
 3 & 1 & 3 & 2 & 4 & 4 & 1 & 3 & 4 & 1 & 2 & 2 \cr
\hline
\end{array}
\]
\caption{A 4-coloring of $G_{21,12}$ due to Tom Sirgedas.\label{ta:2112}}
\end{table}
\end{proof}

\begin{theorem}\label{th:22x10}
$G_{22,10}$ is 4-colorable
\end{theorem}

\begin{proof}
Brad Larsen obtained a 4-coloring of $G_{22,10}$:
We present it in Table~\ref{ta:2210}.

\begin{table}[htbp]
\[
\begin{array}{|c|c|c|c|c|c|c|c|c|c|}
\hline
 1  & 2  & 3  & 3  & 2  & 2  & 1  & 1  & 4  & 4\cr
\hline
 2  & 1  & 4  & 2  & 4  & 4  & 1  & 3  & 1  & 3\cr
\hline
 4  & 2  & 4  & 3  & 1  & 1  & 2  & 3  & 1  & 4\cr
\hline
 1  & 1  & 2  & 2  & 3  & 3  & 4  & 4  & 2  & 1\cr
\hline
 1  & 4  & 1  & 1  & 2  & 3  & 3  & 2  & 2  & 3\cr
\hline
 1  & 4  & 3  & 4  & 3  & 1  & 2  & 3  & 2  & 2\cr
\hline
 2  & 1  & 2  & 1  & 4  & 1  & 3  & 4  & 3  & 2\cr
\hline
 1  & 3  & 3  & 4  & 1  & 4  & 2  & 2  & 4  & 3\cr
\hline
 1  & 4  & 4  & 3  & 3  & 2  & 3  & 2  & 1  & 2\cr
\hline
 3  & 3  & 4  & 4  & 1  & 2  & 3  & 4  & 2  & 1\cr
\hline
 3  & 2  & 2  & 1  & 3  & 4  & 4  & 2  & 1  & 3\cr
\hline
 3  & 4  & 3  & 2  & 2  & 1  & 1  & 4  & 4  & 2\cr
\hline
 4  & 3  & 2  & 4  & 2  & 3  & 4  & 3  & 1  & 1\cr
\hline
 2  & 2  & 1  & 4  & 4  & 1  & 3  & 3  & 2  & 4\cr
\hline
 3  & 2  & 1  & 3  & 4  & 3  & 4  & 1  & 1  & 2\cr
\hline
 4  & 4  & 1  & 2  & 1  & 4  & 1  & 2  & 3  & 3\cr
\hline
 2  & 1  & 4  & 3  & 1  & 2  & 4  & 1  & 4  & 3\cr
\hline
 3  & 4  & 2  & 1  & 4  & 2  & 1  & 3  & 3  & 1\cr
\hline
 2  & 4  & 3  & 1  & 1  & 3  & 4  & 2  & 3  & 4\cr
\hline
 4  & 3  & 1  & 2  & 3  & 2  & 2  & 4  & 3  & 1\cr
\hline
 4  & 3  & 2  & 3  & 4  & 1  & 2  & 1  & 4  & 1\cr
\hline
 2  & 1  & 1  & 3  & 2  & 4  & 2  & 4  & 3  & 4\cr
\hline
\end{array}
\]
\caption{A 4-coloring of $2_{22,10}$ due to Brad Larsen\label{ta:2210}.}
\end{table}
\end{proof}

\begin{theorem}\label{th:18x18}
$G_{18,18}$ is 4-colorable
\end{theorem}

\begin{proof}
Bernd Steinbach and Christian Posthoff obtained 
a 4-coloring of $G_{18,18}$.
We present the coloring in Table~\ref{ta:1818}.

\begin{table}[htbp]
\[
\begin{array}{|c|c|c|c|c|c|c|c|c|c|c|c|c|c|c|c|c|c|}
\hline
 1 & 2 & 2 & 1 & 4 & 4 &  4 & 1 & 3 & 1 & 1 & 3 &  4 & 3 & 2 & 3 & 4 & 2 \cr
\hline
 3 & 1 & 4 & 4 & 4 & 3 &  3 & 2 & 4 & 1 & 2 & 2 &  1 & 1 & 2 & 3 & 2 & 3 \cr
\hline
 2 & 2 & 3 & 3 & 1 & 1 &  4 & 2 & 4 & 2 & 1 & 4 &  1 & 3 & 4 & 4 & 1 & 3 \cr
\hline
 1 & 1 & 3 & 2 & 4 & 3 &  1 & 2 & 1 & 4 & 4 & 3 &  2 & 4 & 3 & 4 & 1 & 2 \cr
\hline
 2 & 4 & 2 & 3 & 3 & 4 &  3 & 3 & 4 & 1 & 2 & 3 &  2 & 4 & 1 & 2 & 1 & 1 \cr
\hline
 3 & 4 & 4 & 1 & 1 & 2 &  2 & 2 & 1 & 4 & 3 & 3 &  3 & 1 & 4 & 2 & 4 & 1 \cr
\hline
 2 & 1 & 3 & 2 & 2 & 2 &  3 & 4 & 3 & 3 & 1 & 4 &  3 & 4 & 2 & 1 & 4 & 1 \cr
\hline
 4 & 1 & 4 & 3 & 1 & 2 &  4 & 1 & 2 & 2 & 2 & 1 &  3 & 4 & 3 & 3 & 3 & 2 \cr
\hline
 4 & 4 & 1 & 3 & 4 & 3 &  2 & 1 & 2 & 3 & 3 & 4 &  2 & 1 & 2 & 1 & 1 & 4 \cr
\hline
 2 & 3 & 3 & 4 & 3 & 4 &  2 & 1 & 1 & 4 & 3 & 4 &  1 & 2 & 1 & 3 & 2 & 2 \cr
\hline
 4 & 1 & 1 & 1 & 2 & 1 &  3 & 4 & 4 & 4 & 3 & 2 &  4 & 3 & 1 & 2 & 3 & 2 \cr
\hline
 3 & 2 & 3 & 4 & 2 & 1 &  2 & 3 & 1 & 1 & 2 & 1 &  4 & 4 & 4 & 1 & 3 & 4 \cr
\hline
 3 & 2 & 4 & 2 & 3 & 1 &  1 & 1 & 2 & 3 & 4 & 4 &  4 & 3 & 3 & 2 & 2 & 1\cr
\hline
 3 & 3 & 4 & 3 & 2 & 4 &  1 & 4 & 3 & 2 & 1 & 1 &  2 & 1 & 1 & 4 & 2 & 4\cr
\hline
 4 & 3 & 2 & 1 & 2 & 4 &  1 & 2 & 2 & 3 & 4 & 3 &  1 & 2 & 4 & 1 & 3 & 3\cr
\hline
 1 & 3 & 2 & 2 & 1 & 3 &  2 & 3 & 4 & 2 & 4 & 2 &  3 & 3 & 1 & 1 & 4 & 4\cr
\hline
 1 & 4 & 1 & 4 & 3 & 3 &  4 & 4 & 3 & 2 & 4 & 1 &  1 & 2 & 2 & 2 & 3 & 1\cr
\hline
 4 & 2 & 1 & 4 & 1 & 2 &  1 & 3 & 3 & 1 & 3 & 2 &  2 & 2 & 3 & 4 & 4 & 3\cr
\hline
\end{array}
\]
\caption{A 4-coloring of $2_{18,18}$ due to Bernd Steinbach and Christian Posthoff~\label{ta:1818}.}
\end{table}
\end{proof}

\begin{theorem}\label{th:obs4}
$$\OBS_4= 
\{ G_{41,5}, 
   G_{31,6}, 
   G_{29,7}, 
   G_{25,9}, 
   G_{23,10}, 
   G_{22,11}, 
   G_{21,13}, 
   G_{19,17} \} \bigcup $$

$$\{ 
   G_{17,19}, 
   G_{13,21}, 
   G_{11,22}, 
   G_{10,23}, 
   G_{9,25}, 
   G_{7,29}, 
   G_{6,31}, 
   G_{5,41}\}$$
\end{theorem}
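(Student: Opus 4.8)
\noindent
I would prove this by first using RFC to settle the handful of grids whose $4$-colorability was left open in the previous subsection, which completes the $C/N$ table for $c=4$, and then reading $\OBS_4$ straight off that table.

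\textbf{Step 1: complete the table.} By the preceding results, the $4$-colorability status of $\Gnm$ is already known for every pair $(n,m)$ except $G_{17,17}$, $G_{17,18}$, $G_{18,17}$, $G_{18,18}$, $G_{21,12}$, and $G_{12,21}$. Lemma~\ref{le:21x12} supplies a rectangle-free subset of $G_{21,12}$ of size $\ceil{\frac{21\cdot 12}{4}}$ and Lemma~\ref{le:18x18} one of $G_{18,18}$ of size $\ceil{\frac{18\cdot 18}{4}}$, so RFC makes $G_{21,12}$ and $G_{18,18}$ both $4$-colorable. Then $G_{12,21}$ (the transpose of $G_{21,12}$) and $G_{17,17}$, $G_{17,18}$, $G_{18,17}$ (each contained in $G_{18,18}$, hence $4$-colorable by downward inheritance) are $4$-colorable too, so every grid now has a definite status.

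\textbf{Step 2: extract $\OBS_4$.} A grid lies in $\OBS_4$ precisely when it is not $4$-colorable but every grid obtained by deleting one row or one column is $4$-colorable. Since $G_{20,20}$ is not $4$-colorable (Theorem~\ref{obsupperbound}), no $\Gnm$ with $n,m\ge 20$ is $4$-minimal, and any $\Gnm$ with $\min(n,m)\le 4$ is trivially $4$-colorable; so I only need to run through $m=5,\dots,19$, find for each the least $n\ge m$ (if any) making $G_{n,m}$ non-$4$-colorable, and check whether $G_{n-1,m}$ and $G_{n,m-1}$ are $4$-colorable. For $m=5,6,7,9,10,11,13,17$ this yields $G_{41,5}$, $G_{31,6}$, $G_{29,7}$, $G_{25,9}$, $G_{23,10}$, $G_{22,11}$, $G_{21,13}$, $G_{19,17}$: the non-$4$-colorability comes from Theorem~\ref{le:col4}(1)--(9) and Lemma~\ref{le:19x17}, and the $4$-colorability of the relevant smaller grids from Theorems~\ref{th:cplusone}, \ref{th:cplusgen}, \ref{th:primepower}, Lemmas~\ref{le:24x9}, \ref{le:21x11}, \ref{le:22x10}, plus the two RFC-colorings from Step~1 (notably $G_{18,17}$ inside $G_{18,18}$, and $G_{21,12}$). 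For the remaining $m=8,12,14,15,16,18,19$, the least non-$4$-colorable $G_{n,m}$ already properly contains one of those eight grids and so is not $4$-minimal. Finally, $\OBS_4$ is closed under transposition, so adjoining the eight transposed grids gives exactly the claimed list.

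\textbf{Main obstacle.} Once RFC pins down those six entries there is essentially no further mathematics: the work is the exhaustive case check over $m$, which has to be done carefully in the narrow band $17\le n,m\le 21$ where the $C$ and $N$ regions interleave. The crux is that the RFC-coloring of $G_{18,18}$ is exactly what knocks $G_{17,17}, G_{17,18}, G_{18,17}, G_{18,18}$ out of $\OBS_4$ while leaving $G_{19,17}$ minimal, and the RFC-coloring of $G_{21,12}$ does the same for $G_{21,12}$ versus $G_{21,13}$; a version of RFC too weak to cover precisely these two rectangle-free sets would change the answer.
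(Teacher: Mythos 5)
Your proposal is correct and follows essentially the same route as the paper: invoke RFC on the rectangle-free sets of Lemmas~\ref{le:21x12} and~\ref{le:18x18} to make $G_{21,12}$ and $G_{18,18}$ (hence their subgrids) $4$-colorable, then verify minimality of each listed grid via Lemmas~\ref{le:col4}, \ref{le:19x17}, \ref{le:24x9}, \ref{le:21x11}, \ref{le:22x10} and rule out all others by containment, exactly as in the proof of Theorem~\ref{th:obs2}. In fact your write-up supplies the case-by-case sweep over $m$ that the paper explicitly leaves to the reader.
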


\begin{proof}

We only deal with $G_{n,m}$ where $n\ge m$.
We show that, for all the grids $G_{n,m}$ listed where $a\ge b$,
$G_{n,m}$ is not 4-colorable but $G_{a-1,b}$ and $G_{n,m-1}$ are 4-colorable.

\begin{enumerate}
\item
$G_{41,5}$ is not 4-colorable by Theorem~\ref{th:col4}.
$G_{40,5}$ is 4-colorable by Theorem~\ref{th:col4}.
$G_{41,4}$ is clearly 4-colorable.
\item
$G_{31,6}$ is not 4-colorable by Theorem~\ref{th:col4}.
$G_{30,6}$  and $G_{31,5}$ are 4-colorable by Theorem~\ref{th:col4}.
\item
$G_{29,7}$ is not 4-colorable by Theorem~\ref{th:col4}.
$G_{28,7}$ and $G_{29,6}$  are 4-colorable by Theorem~\ref{th:col4}.
\item
$G_{25,9}$ is not 4-colorable by Theorem~\ref{th:col4}.
$G_{24,9}$ is 4-colorable by Theorem~\ref{th:24x9}.
$G_{25,8}$ is 4-colorable by Theorem~\ref{th:col4}.
\item
$G_{23,10}$ is not 4-colorable by Theorem~\ref{th:col4}.
$G_{22,10}$ is 4-colorable by Theorem~\ref{th:22x10}.
$G_{23,9}$ is 4-colorable by Theorem~\ref{th:24x9}.
\item
$G_{22,11}$ is not 4-colorable by Theorem~\ref{th:col4}.
$G_{22,10}$ is 4-colorable by Theorem~\ref{th:22x10}.
$G_{21,11}$ is 4-colorable by Theorem~\ref{th:21x12}.
\item
$G_{21,13}$ is not 4-colorable by Theorem~\ref{th:col4}.
$G_{20,13}$ is 4-colorable by Theorem~\ref{th:col4}.
$G_{21,12}$ is 4-colorable by Theorem~\ref{th:21x12}.
\item
$G_{19,17}$ is not 4-colorable by Theorem~\ref{th:19x17}.
$G_{18,17}$ is 4-colorable by Theorem~\ref{th:18x18}.
$G_{19,16}$ is 4-colorable by Theorem~\ref{th:col4}.
\end{enumerate}

The following chart indicates exactly which grids are 4-colorable.
The entry for $(n,m)$ is $C$ if $\Gnm$ is 4-colorable, and
$N$ if $\Gnm$ is not 4-colorable.
From the chart one easily sees that the grids listed in this theorem
are the only elements of $\OBS_4$.

\begin{table}[htbp]
 \[
 \begin{array}{|c|c|c|c|c|c|c|c|c|c|c|c|c|c|c|c|c|c|c|}
 \hline
   & 04 & 05 & 06 & 07 & 08 & 09 & 10 & 11 & 12 & 13 & 14 & 15 & 16 & 17 & 18 & 19 & 20 & 21 \cr
 \hline
  8& C  & C  & C  & C  & C  & C  & C  & C  & C  & C  & C  & C  & C  & C  & C  & C  & C & C \cr
 \hline
  9& C  & C  & C  & C  & C  & C  & C  & C  & C  & C  & C  & C  & C  & C  & C  & C  & C & C \cr
 \hline
 10& C  & C  & C  & C  & C  & C  & C  & C  & C  & C  & C  & C  & C  & C  & C  & C  & C & C \cr
 \hline
 11& C  & C  & C  & C  & C  & C  & C  & C  & C  & C  & C  & C  & C  & C  & C  & C  & C & C \cr
 \hline
 12& C  & C  & C  & C  & C  & C  & C  & C  & C  & C  & C  & C  & C  & C  & C  & C  & C & C \cr
 \hline
 13& C  & C  & C  & C  & C  & C  & C  & C  & C  & C  & C  & C  & C  & C  & C  & C  & C & N \cr
 \hline
 14& C  & C  & C  & C  & C  & C  & C  & C  & C  & C  & C  & C  & C  & C  & C  & C  & C & N \cr
 \hline
 15& C  & C  & C  & C  & C  & C  & C  & C  & C  & C  & C  & C  & C  & C  & C  & C  & C & N \cr
 \hline
 16& C  & C  & C  & C  & C  & C  & C  & C  & C  & C  & C  & C  & C  & C  & C  & C  & C & N \cr
 \hline
 17& C  & C  & C  & C  & C  & C  & C  & C  & C  & C  & C  & C  & C  & C & C & N  & N  & N\cr
 \hline
 18& C  & C  & C  & C  & C  & C  & C  & C  & C  & C  & C  & C  & C  & C & C & N  & N  & N\cr
 \hline
 19& C  & C  & C  & C  & C  & C  & C  & C  & C  & C  & C  & C  & C  & N  & N  & N  & N & N\cr
 \hline
 20& C  & C  & C  & C  & C  & C  & C  & C  & C  & C  & C  & C  & C  & N  & N  & N  & N & N\cr
 \hline
 21& C  & C  & C  & C  & C  & C  & C  & C  & C & N  & N  & N  & N  & N  & N  & N  & N & N\cr
 \hline
 22& C  & C  & C  & C  & C  & C  & C & N  & N  & N  & N  & N  & N  & N  & N  & N  & N & N\cr
 \hline
 23& C  & C  & C  & C  & C  & C  & N  & N  & N  & N  & N  & N  & N  & N  & N  & N  & N & N\cr
 \hline
 24& C  & C  & C  & C  & C  & C  & N  & N  & N  & N  & N  & N  & N  & N  & N  & N  & N & N\cr
 \hline
 25& C  & C  & C  & C  & C  & N  & N  & N  & N  & N  & N  & N  & N  & N  & N  & N  & N & N\cr
 \hline
 26& C  & C  & C  & C  & C  & N  & N  & N  & N  & N  & N  & N  & N  & N  & N  & N  & N & N \cr
 \hline
 27& C  & C  & C  & C  & C  & N  & N  & N  & N  & N  & N  & N  & N  & N  & N  & N  & N & N\cr
 \hline
 28& C  & C  & C  & C  & C  & N  & N  & N  & N  & N  & N  & N  & N  & N  & N  & N  & N & N\cr
 \hline
 29& C  & C  & C  & N  & N  & N  & N  & N  & N  & N  & N  & N  & N  & N  & N  & N  & N & N \cr
 \hline
 30& C  & C  & C  & N  & N  & N  & N  & N  & N  & N  & N  & N  & N  & N  & N  & N  & N & N\cr
 \hline
 31& C  & C  & N  & N  & N  & N  & N  & N  & N  & N  & N  & N  & N  & N  & N  & N  & N & N\cr
 \hline
 32& C  & C  & N  & N  & N  & N  & N  & N  & N  & N  & N  & N  & N  & N  & N  & N  & N  & N \cr
 \hline
 33& C  & C  & N  & N  & N  & N  & N  & N  & N  & N  & N  & N  & N  & N  & N  & N  & N & N\cr
 \hline
 34& C  & C  & N  & N  & N  & N  & N  & N  & N  & N  & N  & N  & N  & N  & N  & N  & N & N\cr
 \hline
 35& C  & C  & N  & N  & N  & N  & N  & N  & N  & N  & N  & N  & N  & N  & N  & N  & N & N\cr
 \hline
 36& C  & C  & N  & N  & N  & N  & N  & N  & N  & N  & N  & N  & N  & N  & N  & N  & N & N\cr
 \hline
 37& C  & C  & N  & N  & N  & N  & N  & N  & N  & N  & N  & N  & N  & N  & N  & N  & N  & N\cr
 \hline
 38& C  & C  & N  & N  & N  & N  & N  & N  & N  & N  & N  & N  & N  & N  & N  & N  & N  & N\cr
 \hline
 39& C  & C  & N  & N  & N  & N  & N  & N  & N  & N  & N  & N  & N  & N  & N  & N  & N & N\cr
 \hline
 40& C  & C  & N  & N  & N  & N  & N  & N  & N  & N  & N  & N  & N  & N  & N  & N  & N & N\cr
 \hline
 41& C  & N  & N  & N  & N  & N  & N  & N  & N  & N  & N  & N  & N  & N  & N  & N  & N & N\cr
 \hline
 \end{array}
 \]
\caption{4-Colorable Grids ($C$) and non 4-Colorable Grids ($N$)}\label{ta:col4}
\end{table}
\end{proof}

\section{Application to Bipartite Ramsey Numbers}\label{se:bipartite}

We state the Bipartite Ramsey Theorem.
See~\cite{GRS} for history, details, and proof.

\begin{definition}
A complete bipartite graph, $G = (V_1,V_2, E)$, is a bipartite graph such that for any two vertices, $v_1 \in V_1$ and $v_2 \in V_2$, $(v_1, v_2)$ is an edge in $G$. The complete bipartite graph with partitions of size $|V_1| = a$ and $|V_2| = b$, is denoted $K_{a,b}$.

\end{definition}

\begin{theorem}
For all $a,c$ there exists $n=BR(a,c)$ such that for all
$c$-colorings of the edges of $K_{n,n}$ there
will be a monochromatic $K_{a,a}$.
\end{theorem}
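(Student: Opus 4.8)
The plan is to prove this by a double application of the pigeonhole principle, in the same spirit as the introduction's observation that $G_{c+1,c^{c+1}+1}$ has no $c$-coloring (which is exactly the $a=2$ instance of the argument below). First I would translate into the grid language of this paper: a $c$-coloring of the edges of $K_{n,n}$ is exactly a function $\chi\colon G_{n,n}\to[c]$, and a monochromatic $K_{a,a}$ is exactly a choice of $a$ rows $i_1<\cdots<i_a$ and $a$ columns $j_1<\cdots<j_a$ with $\chi$ constant on $\{i_1,\dots,i_a\}\times\{j_1,\dots,j_a\}$ (a monochromatic rectangle being the case $a=2$). So it suffices to exhibit, for each $a$ and $c$, a value $n$ such that every $\chi\colon G_{n,n}\to[c]$ contains such a monochromatic $a\times a$ pattern; this $n$ then serves as $BR(a,c)$.

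Next I would fix a ``height'' $M=ac$ and a ``width'' $N=ac\binom{ac}{a}$, set $n=\max(M,N)=N$ (note $N\ge M$), and restrict attention to the subgrid $G_{M,N}\subseteq G_{n,n}$. For each column $j\in[N]$, the restriction of $\chi$ to that column is a function $[M]\to[c]$; since $M=ac$, some color $d_j\in[c]$ occurs on at least $\lceil M/c\rceil=a$ cells of column $j$. Let $S_j\subseteq[M]$ be the set of the $a$ smallest row indices $i$ with $\chi(i,j)=d_j$, so $|S_j|=a$, and call $(S_j,d_j)\in\binom{[M]}{a}\times[c]$ the \emph{signature} of column $j$.

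Then I would simply count: there are only $\binom{ac}{a}\cdot c$ possible signatures, while there are $N=ac\binom{ac}{a}>(a-1)\cdot\binom{ac}{a}\cdot c$ columns, so by pigeonhole some signature $(S,d)$ is shared by at least $a$ columns $j_1<\cdots<j_a$. For every $i\in S$ and every $k\in\{1,\dots,a\}$ we have $\chi(i,j_k)=d$ by construction, so $\chi$ is constant (equal to $d$) on $S\times\{j_1,\dots,j_a\}$, and since $|S|=a$ this is the required monochromatic $K_{a,a}$. Hence $BR(a,c)$ exists with $BR(a,c)\le ac\binom{ac}{a}$. There is essentially no obstacle here — the only points needing any care are making $S_j$ well defined (whence the ``$a$ smallest indices'' tie-break) and checking that one value of $n$ simultaneously meets the height requirement $n\ge ac$ and the width requirement $n\ge ac\binom{ac}{a}$; as with Gallai--Witt, the resulting bound is enormous and surely far from tight, but only finiteness is asserted.
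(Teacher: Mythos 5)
Your proposal is correct. Note that the paper does not actually prove this theorem---it only states it and defers to Graham--Rothschild--Spencer \cite{GRS}---so there is no in-paper argument to compare against; your self-contained proof fills that gap. The double-pigeonhole you use (first forcing a color to repeat $a$ times in each column of a grid with $M=ac$ rows, then forcing $a$ columns to share the same ``signature'' $(S_j,d_j)$ among the $c\binom{ac}{a}$ possibilities) is the standard argument, and it is exactly the refinement of the paper's own introductory observation that $G_{c+1,c^{c+1}+1}$ is not $c$-colorable; the introduction's version pigeonholes on the entire column coloring ($c^{c+1}$ possibilities), whereas yours pigeonholes only on the position and color of a majority block, which is what lets the bound $BR(a,c)\le ac\binom{ac}{a}$ come out polynomially rather than exponentially in $c$ for fixed $a$. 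The only minor loose end is that $d_j$ should be pinned down when several colors occur at least $a$ times in a column (e.g.\ take the least such color), symmetric to the tie-break you already impose on $S_j$; with that, every step checks out.
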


The following theorem is easily seen to be
equivalent to this.

\begin{theorem}
For all $a,c$ there exists $n=BR(a,c)$ so that for all
$c$-colorings of $G_{n,n}$ there will be a monochromatic
$a\times a$ submatrix.
\end{theorem}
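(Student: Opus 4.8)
The plan is to derive this statement directly from the Bipartite Ramsey Theorem stated immediately above it, by making precise the dictionary between colorings of a grid and edge-colorings of a complete bipartite graph.

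First I would fix the correspondence. Given $n$, identify the left vertex set of $K_{n,n}$ with the row indices $[n]$ and the right vertex set with the column indices $[n]$, so that the edge joining left vertex $i$ to right vertex $j$ corresponds to the cell $(i,j)$ of $G_{n,n}$. A $c$-coloring $\chi$ of $G_{n,n}$ (an arbitrary function $G_{n,n}\to[c]$, with no rectangle-freeness required here) is then literally the same object as a $c$-coloring of the edge set of $K_{n,n}$.

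Second, I would check that the two notions of ``monochromatic $a\times a$ object'' coincide under this correspondence. A monochromatic $a\times a$ submatrix of $G_{n,n}$ is, by definition, a choice of $a$ rows $R=\{i_1,\dots,i_a\}$ and $a$ columns $S=\{j_1,\dots,j_a\}$ together with a color $\gamma\in[c]$ such that $\chi(i,j)=\gamma$ for every $(i,j)\in R\times S$. Translating through the dictionary, this says exactly that every one of the $a^2$ edges of $K_{n,n}$ joining a vertex of $R$ on the left to a vertex of $S$ on the right has color $\gamma$; that is, the bipartite subgraph with parts $R$ and $S$ is a monochromatic $K_{a,a}$ in color $\gamma$. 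Conversely, any monochromatic $K_{a,a}$ subgraph of the edge-colored $K_{n,n}$ picks out such an $R$, $S$, and $\gamma$. Now set $n=BR(a,c)$ to be the value supplied by the Bipartite Ramsey Theorem: for this $n$, any $c$-coloring of $G_{n,n}$ is a $c$-coloring of the edges of $K_{n,n}$, which by that theorem contains a monochromatic $K_{a,a}$, hence a monochromatic $a\times a$ submatrix of $G_{n,n}$. So the same $BR(a,c)$ works, and the two theorems are equivalent.

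\textbf{Main obstacle.} There is none of substance: all the content lies in the Bipartite Ramsey Theorem, which we are permitted to assume, and this step is purely a reformulation. The only points requiring (minor) care are (i) that ``submatrix'' is meant combinatorially, i.e., the chosen rows and columns need not be contiguous, which is precisely what matches the subgraph (not induced-subgraph) notion of $K_{a,a}$; and (ii) that here $\chi$ ranges over \emph{all} functions $G_{n,n}\to[c]$, in contrast to the rectangle-free colorings that are the subject of the rest of the paper.
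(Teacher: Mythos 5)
Your proposal is correct and is exactly the argument the paper has in mind: the paper simply asserts that this theorem is ``easily seen to be equivalent'' to the Bipartite Ramsey Theorem stated just before it, and your dictionary between cells of $G_{n,n}$ and edges of $K_{n,n}$ (with monochromatic $a\times a$ submatrices corresponding to monochromatic $K_{a,a}$ subgraphs) is the intended reformulation. No gaps.
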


In this paper we are $c$-coloring $G_{n,m}$ and looking
for a $2\times 2$ monochromatic submatrix.
We have the following theorems which, except where noted,
seem to be new.

\begin{theorem}\label{th:bipartite}~
\begin{enumerate}
\item
$BR(2,2) = 5$.
(This was also shown in~\cite{bipartitenum}.)
\item
$BR(2,3) = 11$.
\item
$ BR(2,4)=19$.
\item
$BR(2,c) \le c^2+c$.
\item
If $p$ is a prime and $s\in\natt$, then $BR(2,p^s)> p^{2s}$.
\item
For almost all $c$, $BR(2,c) \ge c^2 - 2c^{1.525}+c^{1.05}.$
\end{enumerate}
\end{theorem}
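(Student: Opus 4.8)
The plan is to use the reformulation stated just above the theorem: $BR(2,c)$ is exactly the least $n$ for which $G_{n,n}$ is not $c$-colorable (colorability passing to every contained grid). Each of the six claims then reduces to locating the diagonal grid $G_{n,n}$ relative to the colorable/uncolorable boundary already established, with one appeal to analytic number theory needed only for part (6).

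Parts (1)--(5) should follow immediately from earlier results. For (1), $G_{5,5}$ is not $2$-colorable by Lemma~\ref{le:col2} while $G_{6,4}\supseteq G_{4,4}$ is $2$-colorable by Lemma~\ref{le:col2}, so $BR(2,2)=5$. For (2), $G_{11,11}$ is not $3$-colorable by Lemma~\ref{le:col3} and $G_{10,10}$ is $3$-colorable by Lemma~\ref{le:10x10}, so $BR(2,3)=11$. For (3), $G_{20,16}\supseteq G_{16,16}$ is $4$-colorable by Theorem~\ref{le:col4} (giving $BR(2,4)\ge 17$), while $G_{19,19}\supseteq G_{19,18}$ is not $4$-colorable by Theorem~\ref{le:col4} (giving $BR(2,4)\le 19$). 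For (4), Theorem~\ref{obsupperbound} says $G_{c^2+c,\,c^2+c}$ is not $c$-colorable, so $BR(2,c)\le c^2+c$. For (5), I would instantiate Theorem~\ref{th:primepower} with $d=2$: with prime $p$ and exponent $s$ this yields $c=p^s$, $r=p^s$, $m=\frac{p^{2s}-1}{p^s-1}=p^s+1$, so $G_{p^{2s},\,p^{2s}+p^s}$ is $p^s$-colorable; restricting to the sub-grid $G_{p^{2s},p^{2s}}$ gives $BR(2,p^s)\ge p^{2s}+1 > p^{2s}$.

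The only part with real content is (6), and the idea is to feed in a strong prime-gap estimate. By the theorem of Baker, Harman, and Pintz, for every sufficiently large real $x$ the interval $(x-x^{0.525},\,x]$ contains a prime; taking $x=c$, for all large $c$ there is a prime $p$ with $c-c^{0.525}<p\le c$. Since $p\le c$, a $p$-coloring is a $c$-coloring, and Corollary~\ref{co:prime} supplies a $p$-coloring of $G_{p^2,\,p^2+p}$, hence of its sub-grid $G_{p^2,p^2}$. Therefore
$$
BR(2,c)\;\ge\;p^2+1\;>\;(c-c^{0.525})^2\;=\;c^2-2c^{1.525}+c^{1.05},
$$
which holds for all sufficiently large $c$, and in particular for almost all $c$.

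The main (and essentially only) obstacle is in part (6): one must invoke a prime-gap bound strong enough to guarantee a prime within $c^{0.525}$ below $c$, and then verify the elementary identity $(c-c^{0.525})^2=c^2-2c^{1.525}+c^{1.05}$; the statement is phrased for ``almost all'' rather than ``all large'' $c$ only because the Baker--Harman--Pintz threshold is not made explicit. All remaining steps are direct consequences of the colorability facts proved earlier in the paper.
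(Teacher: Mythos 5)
Your proposal is correct and follows essentially the same route as the paper: reduce each part to the colorability status of the relevant diagonal grid using the earlier lemmas, instantiate Theorem~\ref{th:primepower} with $d=2$ for part (5), and combine the Baker--Harman--Pintz prime-gap bound with the prime case for part (6). The only (harmless) divergences are bookkeeping choices of which lemma to cite for $G_{11,11}$, and your correct observation that $m=p^s+1$ in part (5), where the paper's text has a small slip.
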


\begin{proof}

\noindent
1) By Theorem~\ref{th:col2},
$G_{5,5}$ is not 2-colorable
and $G_{4,4}$ is 2-colorable.

\smallskip

\noindent
2) By Theorem~\ref{th:11x10},
$G_{11,11}$ is not 3-colorable.
By Theorem~\ref{th:10x10}
$G_{10,10}$ is 3-colorable.

\smallskip

\noindent
3) By Theorem~\ref{th:19x17},
$G_{19,19}$ is not 4-colorable.
By Theorem~\ref{th:18x18}
$G_{18,18}$ is 4-colorable.

\smallskip

\noindent
4) By Theorem~\ref{th:csq},
$G_{c^2+c,c^2+c}$ is not $c$-colorable.

\smallskip

\noindent
5) By Theorem~\ref{th:primepower}, 
$G_{cr,cm}$ is $c$-colorable where
$c=p^s$, $r=p^s$, and $m=\frac{p^{2s}-1}{p^s-1}$.
Note that $m\ge p^s$. Hence
$G_{p^{2s},p^{2s}}$ is $p^s$-colorable.

\smallskip

\noindent
6) Baker, Harman,
and Pintz~\cite{BHP} (see~\cite{ConPrimesSurvey} for
a survey)
showed that
for almost all $c$, there is a prime between
$c$ and $c-c^{0.525}$.
Let $p$ be that prime. By part 5 with $s=1$,
$BR(2,p)\ge p^2$.
Hence

$$
BR(2,c) \ge BR(2,p) \ge p^2 \ge (c-c^{0.525})^2 =
c^2 - 2c^{1.525} + c^{1.05}.
$$
\end{proof}

\section{Open Questions}

\begin{enumerate}
\item
Refine our tools so that our ugly proofs can be corollaries
of our tools.
\item
Find an algorithm that will, given
$c$, find $\OBS_c$ or $|\OBS_c|$ quickly.
\item
We know that $2\sqrt{c}(1-o(1)) \le |\OBS_c| \le 2c^2$.
Bring these bounds closer together.
\item
All of our results of the form {\it $G_{n,m}$ is not $c$-colorable}
have the same type of proof: show that there is no rectangle free subset of
$G_{n,m}$ of size $\ceil{ab/c}$. Either
\begin{itemize}
\item
show that if a grid $G_{n,m}$ has a rectangle free set of size $\ceil{nm/c}$
then it is $c$-colorable, or
\item
develop some other technique to show grids are not $c$-colorable.
\end{itemize}
\item
Find $OBS_5$ and beyond!
\end{enumerate}

\section{Acknowledgments}

We would like to thank the following people for providing us with colorings:
\begin{enumerate}
\item
Brad Larsen for providing us with a 4-colorings of $G_{22,10}$,
\item
Bernd Steinbach and Christian Posthoff for providing us with 4-colorings of
$G_{21,12}$ and $G_{18,18}$.
\item
Tom Sirgedas for providing us with another 4-colorings of $G_{21,12}$,
\end{enumerate}

We thank Ken Berg and Quimey Vivas for providing us with
proofs that, for $c$ a prime power,  $G_{c^2,c^2}$ is $c$-colorable.
We would also like to thank Ken Berg for
the proof that, for $c$ a prime power,  $G_{c^2,c^2+c}$ is $c$-colorable.

We thank Michelle Burke, Brett Jefferson,
and Krystal Knight who worked with the second and third authors over the
Summer of 2006 on this problem.
As noted earlier, Brett Jefferson has his own
paper on this subject~\cite{gridsj}.

We thank
Nils Molina, Anand Oza, and Rohan Puttagunta
who worked with the second author in Fall 2008 on variants
of the problems presented here. They won the Yau prize
for their work.

We thank L{\'a}szl{\'o} Sz{\'e}kely
for pointing out the connection to bipartite Ramsey numbers,
Larry Washington for providing information on number theory
that was used in the proof of Theorem~\ref{th:bipartite},
Russell Moriarty for proofreading and intelligent commentary,

\section{Appendix: Exact Values of $\maxrf n m$ for $0\le n\le 6$, $m\le n$}

\begin{lemma}\label{le:values}~
\begin{enumerate}
\item[0)]
For $m\ge 0$, $\maxrf 0 m = 0$.
\item[1)]
For $m\ge 1$, $\maxrf 1 m = m$.
\item[2)]
For $m\ge 2$, $\maxrf 2 m = m+1$.
\item[3)]
For $m\ge 3$, $\maxrf 3 m  = m+3$.
\item[4)]
\begin{displaymath}
\maxrf 4 m =
\begin{cases}
m+5 \hbox{ if } 4\le m\le 5\\
m+6 \text{ if } m\ge 6 \\
\end{cases}
\end{displaymath}

\item[5)]
\begin{displaymath}
\maxrf 5 m =
\begin{cases}
12 \text{ if } m=5\\
m+8 \text{ if } 6\le m\le 7 \\
m+9 \text{ if } 8\le m\le 9 \\
m+10 \text{ if } m\ge 10 \\
\end{cases}
\end{displaymath}

\item[6)]
\begin{displaymath}
\maxrf 6 m =
\begin{cases}
2m+4 \text{ if } 6\le m \le 7 \\
19 \text{ if } m=8 \\
m+12\text{ if }  9\le m\le 10 \\
m+13 \text{ if }  11 \le m\le 12 \\
m+14 \text{ if }  13 \le m\le 14 \\
m+15 \text{ if }  m\ge 15\\
\end{cases}
\end{displaymath}

\end{enumerate}
\end{lemma}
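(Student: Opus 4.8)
The plan is to handle the upper and lower bounds separately, using the symmetry $\maxrf n m = \maxrf m n$ throughout so that each statement concerns a grid with at most $6$ rows. The cases $n\in\{0,1,2,3\}$ are immediate: the value is $m+\binom n2$, a realizing set is the $\binom n2$ two-element column-supports (the distinct pairs of $[n]$) together with $m-\binom n2$ singleton columns, and the matching upper bound follows from the elementary observation that $x\le 1+\binom x2$ for every $x\ge 0$, so that for a rectangle-free $A$ with column sizes $x_1,\dots,x_m$ we get $|A|=\sum_j x_j\le m+\sum_j\binom{x_j}{2}\le m+\binom n2$ by Lemma~\ref{le:binom}. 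So the real work is $n\in\{4,5,6\}$ for $m$ not too large.

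For the upper bounds I would again start from Lemma~\ref{le:binom} applied to $G_{n,m}$ --- $\sum_{j=1}^m \binom{x_j}{2}\le\binom n2$ with $0\le x_j\le n$ --- and bound $\maxrf n m$ by the optimum of the integer program ``maximize $\sum_j x_j$ subject to these constraints.'' Since $\binom x2$ is convex, for a fixed value of $\sum_j x_j$ the cost $\sum_j\binom{x_j}{2}$ is smallest for the most balanced tuple; equivalently the cheapest way to raise the objective by $1$ is to bump a smallest coordinate from $x$ to $x+1$, at a cost of $x$ units of the budget $\binom n2$. Running this greedy bound for $n=4$ (budget $6$), $n=5$ (budget $10$), $n=6$ (budget $15$), over each relevant range of $m$, reproduces the claimed values: once $m\ge\binom n2$ it is $m+\binom n2$, while for smaller $m$ the budget is spent before every column reaches size $2$, and the transition points in the stated formulas (for $n=6$ the split $\{6,7\},\{8\},\{9,10\},\{11,12\},\{13,14\}$) are exactly where the balanced optimum jumps. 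One may instead quote Lemma~\ref{le:density2} applied to the transposed grid $G_{m,n}$, which packages the identical computation.

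For the lower bounds I would realize each extremal tuple as a genuine rectangle-free set by taking the column supports to be distinct subsets of $[n]$ that pairwise meet in at most one point, which by Lemma~\ref{le:overlap} (and its trivial converse) is exactly rectangle-freeness. Columns of size $\le 2$ are harmless as long as the size-$2$ supports are distinct and none lies inside a size-$3$ support, and the extremal tuples for $n\le 6$ never require more than four size-$3$ columns; so it suffices to exhibit a packing of at most four $3$-subsets of $[6]$ with all their $12$ pairs distinct, e.g.\ $\{1,2,3\},\{1,4,5\},\{2,4,6\},\{3,5,6\}$, supplemented by the three leftover pairs $\{1,6\},\{2,5\},\{3,4\}$; the corresponding packings for $n=4,5$ are one- or two-line checks, and for $m\ge\binom n2$ no triples are needed at all. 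The handful of smallest grids ($G_{4,4},G_{5,5},G_{6,6},G_{6,7},G_{6,8}$) I would also just display in the paper's $R$-grid format.

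The main obstacle is the conjunction of two mild but fiddly points. First, one must check that the Lemma~\ref{le:binom} bound is \emph{attained} in every one of the roughly two dozen sub-cases --- a feature peculiar to $n\le 6$, since in general the $K_{2,2}$-free (Zarankiewicz) condition is strictly stronger than the column-pair count --- and the little $3$-subset packings above are exactly what makes this rigorous. Second, the argument is long only because of the case analysis on $m$ inside each $n$, and the bookkeeping around the transition points is where an error would most easily slip in; that is presumably why this material is quarantined in an appendix.
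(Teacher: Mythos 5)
Your proposal is correct and follows essentially the same route as the paper: upper bounds via the pair-counting inequality of Lemma~\ref{le:binom} balanced over columns (which is exactly what Lemma~\ref{le:density2} packages, and what the paper invokes), and lower bounds by exhibiting column supports that are a few $3$-subsets with pairwise disjoint pair-sets plus distinct leftover pairs and singletons --- your packing $\{1,2,3\},\{1,4,5\},\{2,4,6\},\{3,5,6\}$ is, up to relabeling, the paper's Case~1 construction for $n=6$. The only difference is one of emphasis: you make explicit the convexity/greedy justification for the upper-bound optimum and the attainment check at each transition point, which the paper leaves implicit by working out only the $n=6$ lower bounds and asserting the rest.
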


\begin{proof}

Theorem~\ref{th:density} will provide all of
the upper bounds. The lower bounds are obtained
by actually exhibiting rectangle-free sets of
the appropriate size. We do this for the case of
$\maxrf 6 m $. Our technique applies to all of the other cases.

\smallskip

\noindent
{\bf Case 1: $\maxrf 6 m $ where $6\le m\le 7$ and $m=8$:}
Fill the first four columns with 3 elements (all pairs
overlapping). 
Each column of 3 blocks exactly $\binom{3}{2}=3$
of the possible $\binom{6}{3}=15$ ordered pairs,
hence 12 are blocked.
Hence we can fill the next $15-12=3$ columns with
two elements each, and the remaining column (if $m=8$)
with 1 element.
The picture below shows the result for $\maxrf 6 8 =19$;
however, if you just look at the first 6 (7) columns you get
the result for $\maxrf 6 6 $ ($\maxrf 6 7 $).

\[
\begin{array}{|c|c|c|c|c|c|c|c|}
\hline
R& &R& &R& & & R\cr
\hline
R& & &R& &R &  & \cr
\hline
R&R& & & &  &R &\cr
\hline
 &R&R& & &R & & \cr
\hline
 &R& &R&R&  & & \cr
\hline
 & &R&R& &  &R &\cr
\hline
\end{array}
\]

\smallskip

\noindent
{\bf Case 2: $\maxrf 6 m $ where $9\le m\le 10$:}
Fill the first three columns with 3 elements each
(all pairs overlapping). 
Each column of 3 blocks exactly $\binom{3}{2}=3$
of the possible $\binom{6}{3}=15$ ordered pairs,
hence 9 are blocked.
Hence we can fill the next $15-9=6$ columns with
two elements each and the remaining column (if $m=10$)
with 1 element.
The picture below shows the result for $\maxrf 6 {10} =22$;
however, if you just look at the first 9 columns you get
the result $\maxrf 6 9 =21$.

\[
\begin{array}{|c|c|c|c|c|c|c|c|c|c|}
\hline
R& &R& &R&  &  &   & & \cr
\hline
R& & & & &R &  & R &R& \cr
\hline
R&R& & & &  &R &   & & \cr
\hline
 &R&R& & &R &  &   & & \cr
\hline
 &R& &R&R&  &  & R & & \cr
\hline
 & &R&R& &  &R &   &R&R\cr
\hline
\end{array}
\]

\smallskip

\noindent
{\bf Case 3: $\maxrf 6 m $ where $11\le m\le 12$:}
Fill the first two columns with 3 elements each
(they overlap).
Each column of 3 blocks exactly $\binom{3}{2}=3$
of the possible $\binom{6}{3}=15$ ordered pairs,
hence 6 are blocked.
Hence we can fill the next $15-6=9$ columns with
two elements each and the remaining column (if $m=12$)
with 1 element.
The picture below shows the result for $\maxrf 6 {12} =25$;
however, if you just look at the first 11 columns you get
the result $\maxrf 6 {11} =24$.

\[
\begin{array}{|c|c|c|c|c|c|c|c|c|c|c|c|}
\hline
R& &R& &R&  &  &   & & &R& \cr
\hline
R& & & & &R &  & R &R& & & \cr
\hline
R&R& & & &  &R &   & & & & \cr
\hline
 &R&R& & &R &  &   & &R& & \cr
\hline
 &R& &R&R&  &  & R & & & & \cr
\hline
 & & &R& &  &R &   &R&R &R&R\cr
\hline
\end{array}
\]

\smallskip
\noindent
{\bf Case 4: $\maxrf 6 m $ where $13\le m\le 14$:}
Fill the first column with 3 elements.
This column of 3 blocks exactly $\binom{3}{2}=3$
of the possible $\binom{6}{3}=15$ ordered pairs.
Hence we can fill the next $15-3=12$ columns with
two elements each and the remaining column (if $m=14$)
with 1 element.
We omit the picture.

\smallskip
\noindent
{\bf Case 5: $\maxrf 6 m $ where $ m\ge 15$:}
Fill the first $\binom{6}{2}=15$ columns with two
elements each in a way so that each column has a distinct pair.
Fill the remaining $m-15$ columns with one element each.
The result is a rectangle-free set of size 
$30 + m-15 = m+15$.
\end{proof}

\end{document}